\documentclass[11pt]{article}

\usepackage{amsmath,amsthm,verbatim,amssymb,amsfonts,amscd, graphicx,color, framed,enumerate}

\newcommand{\R}{\mathbb R}
\newcommand{\Z}{\mathbb Z}
\newcommand{\N}{\mathbb N}
\usepackage[left=1in,right=1in,top=1in,bottom=1in]{geometry}
\usepackage[colorlinks]{hyperref}
\usepackage{tikz}
\usetikzlibrary{positioning}
\usetikzlibrary{shapes}
\usepackage{multicol}

\theoremstyle{plain}
\newtheorem{theorem}{Theorem}[section]
\newtheorem{corollary}{Corollary}[section]
\newtheorem{lemma}{Lemma}[section]
\newtheorem{remark}{Remark}
\newtheorem{proposition}{Proposition}[section]

\theoremstyle{definition}
\newtheorem{definition}{Definition}[section]

\theoremstyle{definition}
\newtheorem{example}{Example}

\def\S{\mathcal S}
\def\C{\mathcal C}
\def\Re{\mathcal R}

\def\var{\text{Var}}
\def\P{\mathbb{P}}
\def\E{\mathbb{E}}
\def\P{\mathbb{P}}

 \begin{document}

\title{Deficiency zero for random reaction networks under a  stochastic block model 
framework}
\author{David F.~Anderson\thanks{Department of Mathematics, University of
  Wisconsin, Madison, USA.  anderson@math.wisc.edu.},
  \and
  Tung D.~Nguyen\thanks{Department of Mathematics, University of
  Wisconsin, Madison, USA.  nguyen34@math.wisc.edu.
}
}
\maketitle

\begin{abstract}
Deficiency zero is an important network structure and has been the focus of many celebrated results within reaction network theory. In our previous paper \textit{Prevalence of deficiency zero reaction networks in an Erd\H os-R\'enyi framework}, we provided a framework to quantify the prevalence of deficiency zero  among randomly generated reaction networks. Specifically, given a randomly generated binary reaction network with $n$ species, with an edge between two arbitrary vertices occurring independently with probability $p_n$, we established the threshold function $r(n)=\frac{1}{n^3}$ such that the probability of the random network being deficiency zero converges to 1 if $\frac{p_n}{r(n)}\to 0$ and converges to 0 if $\frac{p_n}{r(n)}\to\infty$, as $n \to \infty$.

With the base Erd\H os-R\'enyi framework as a starting point, the current paper provides a significantly more flexible framework by  weighting the edge probabilities via control parameters $\alpha_{i,j}$, with $i,j\in \{0,1,2\}$ enumerating the types of possible vertices (zeroth, first, or second order). The control parameters can be chosen to generate random reaction networks with a specific underlying structure, such as ``closed'' networks with very few inflow and outflow reactions, or ``open'' networks with abundant inflow and outflow. Under this new framework, for each choice of control parameters $\{\alpha_{i,j}\}$,  we establish a threshold function $r(n,\{\alpha_{i,j}\})$ such that the probability of the random network being deficiency zero converges to 1 if $\frac{p_n}{r(n,\{\alpha_{i,j}\})}\to 0$ and converges to 0 if $\frac{p_n}{r(n,\{\alpha_{i,j}\})}\to \infty$.

\end{abstract}

\section{Introduction}
Reaction networks are used to model a variety of physical systems from microscopic processes such as chemical reactions and protein interactions, to macroscopic phenomena such as the spread of epidemic disease and the evolution of species. In reaction networks, the interacting agents (such as biochemical molecules, animal species, human populations) are referred to by a common term ``species". These networks take the form of directed graphs in which the vertices, often termed \textit{complexes} in the domains of interest, are linear combinations of the species over the non-negative integers and the directed edges, which imply a state transition for the associated dynamical system, are termed \textit{reactions}.  See Figure \ref{figure1} for an example of a reaction network.

To each such graph a quantity termed the \textit{deficiency} can be computed, and this quantity is central to many classical and celebrated results in the field \cite{ACKK:explosion, ACK:ACR, AC:non-mass, ACK:product, AN:non-mass,CW:CB,F1,H,H-J1}. To compute the deficiency, we first note that the vertices of a reaction network, which will be denoted by $y$ and/or $y'$ throughout this paper, can be viewed as vectors in $\Z^n_{\ge0}$.  For example, the vertices in Figure \ref{figure1}  can  be associated with the vectors $\left[\begin{matrix}0\\0\end{matrix}\right], \left[\begin{matrix}1\\1\end{matrix}\right], \left[\begin{matrix}0\\1\end{matrix}\right], \left[\begin{matrix}2\\0\end{matrix}\right], \left[\begin{matrix}0\\2\end{matrix}\right].$  Moreover, a directed edge between two such vectors, $y \to y'$, implies a state update of the form $y'-y \in \Z^n$.  The set of  state update vectors implied by the graph is called the set of  ``reaction vectors'' for the model.   Viewing things in this manner the deficiency, $\delta$, for the graph  is
\[
    \delta = \#\text{vertices} - \#\text{connected components} - \dim( \text{span}(\text{reaction vectors})).
\]
For example, the deficiency of the reaction network in Figure \ref{figure1} is $\delta = 5 - 2 - 2 = 1.$ Given the significance of deficiency zero, a natural question then arises: 

\vspace{.1in}
\noindent Question:  \textit{how prevalent are deficiency zero networks, and, in particular, are they more prevalent in certain natural settings than expected?}

\vspace{.1in}

\begin{figure}\label{figure1}
\begin{center}
\begin{tikzpicture}
    \node (0)   at (4,0)  {$\emptyset$};
    \node (S1+S2)   at (6.8,0)  {$S_1+S_2$};
    \node (S2)  at (6.8,-2.5) {$S_2$};

    \path[->]
    (0) edge  (S1+S2)
    (S1+S2) edge  (S2)
    (S2) edge (0);   
    
    \node (2S1) at (9,-1) {$2S_1$};
    \node (2S2) at (11,-1) {$2S_2$};
    
    \path[->]
    ([yshift=-1.4ex]2S1.north east) edge ([yshift=-1.4ex]2S2.north west)
    ([yshift=1.4ex]2S2.south west) edge ([yshift=1.4ex]2S1.south east);

\end{tikzpicture}
\end{center}
\caption{A reaction network with two species: $S_1$ and $S_2$. The vertices are linear combinations of the species over the integers.  The directed edges are termed reactions and determine the net change in the counts of the species due to one instance of the reaction.  For example, the reaction $S_1 + S_2 \to S_2$ reduces the count of $S_1$ by one, but does not affect the count of $S_2$.
}
\end{figure}
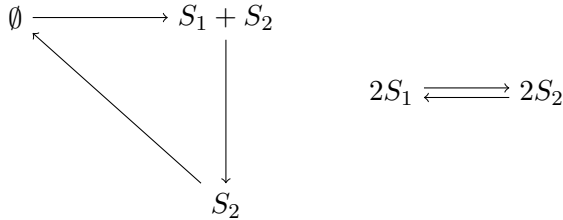

To begin to address this question, our previous paper \cite{prevalence} sought to formulate a framework for deciding the prevalence of deficiency zero among  reaction networks with large numbers of species and vertices. In particular, in \cite{prevalence} we considered random reaction networks generated by an  Erd\H os-R\'enyi random graph framework in the large species limit. We assumed a species set of $\{S_1,\dots,S_n\}$, and, because of their relevance in the biology and chemistry literature, focused on binary reaction networks (whose vertices are of the form $\emptyset$, $S_i$, or $S_i+S_j$, see the next section for an explanation of terminology). We then assumed  that the probability of  an edge, or reaction, between any two vertices, which we denoted by $p_n \in (0,1)$, was fixed.    We then derived a threshold function $r(n)=\frac{1}{n^3}$ such that the probability of the random binary reaction network being deficiency zero converges to $1$, as $n\to \infty$, if $\frac{p_n}{r(n)}\to 0$ and converges to $0$ if $\frac{p_n}{r(n)}\to \infty$. Here we use the usual notation that  for two sequences $\{a_n\}_{n=0}^{\infty} \text{ and } \{b_n\}_{n=0}^{\infty}$ in $\R$ we write $a_n\ll b_n$ or $b_n\gg a_n$ if $\lim_{n\to\infty}\frac{a_n}{b_n}=0$ and $a_n\sim b_n$ if $\lim_{n\to\infty}\frac{a_n}{b_n}=c$ for some constant $c$.

While the basic Erd\H os-R\'enyi framework can serve as a good starting point due to its simplicity, in practice one may want to use a more flexible framework that can be easily adapted to different settings where reaction networks may have different underlying structures. For example, one may want to study a closed system where inflow and outflow reactions such as $\emptyset\leftrightharpoons S_i$ are prohibited. On the other hand, one could be interested in an open system where inflow and outflow reactions are abundant. In another setting, perhaps one wants to only allow for reactions that preserve the number of molecules such as $S_i\leftrightharpoons S_j$ or $S_i+S_j\leftrightharpoons S_h+S_k$. 

To properly generate random reaction networks in those situations, the current paper considers a stochastic block model framework--a  generalized Erd\H os-R\'enyi framework with weighted edge probabilities \cite{blockmodel}. In particular, given that there are $n$ species, we partition the set of all possible reactions into different classes such as $E_n^{0,1}=\{\emptyset \leftrightharpoons S_i$\}, $E_n^{1,1}=\{S_i\leftrightharpoons S_j$\}, $E_n^{0,2}=\{\emptyset\leftrightharpoons S_i+S_j\}$, and so on. In the class with the highest amount of reactions $E_n^{2,2}=\{S_i+S_j\leftrightharpoons S_h+S_k$\}, the edge probability is $p_n$. For each relevant pair of $i$ and $j$ the edge probability for the reaction class $E^{i,j}_n$ is given by $n^{\alpha_{i,j}}p_n$, where $\{\alpha_{i,j}\}$ are parameters that can be used to control the structure of the random reaction networks. For example, $\alpha_{0,1}=\alpha_{0,2}=0$ could be used to model closed systems with very few inflow and outflow reactions.  Given each choice of $\{\alpha_{i,j}\}$, we will provide a threshold function $r(n,\{\alpha_{i,j}\})$ such that the probability of the random binary reaction network being deficiency zero converges to $1$ if $\frac{p_n}{r(n,\{\alpha_{i,j}\})}\to 0$, as $n\to \infty,$ and converges to $0$ if $\frac{p_n}{r(n,\{\alpha_{i,j}\})}\to \infty$. For the sake of brevity, we will write $r(n)$ instead of $r(n, \{\alpha_{i,j}\})$ throughout the rest of the work.

The remainder of this paper is organized as follows. In Section \ref{sec3}, we briefly review some key definitions of reaction network theory, including deficiency. In Section \ref{sec4}, we formally set up the stochastic block model framework briefly described above for generating random reaction networks, and provide some concrete examples with illustrations. In Section \ref{sec5}, we provide a set of conditions for deficiency zero in terms of the expected number of reactions in each reaction class. Finally in Section \ref{sec6}, we provide an algorithm to derive the threshold function $r(n)$ for a given choice of control parameters $\{\alpha_{i,j}\}$, which is based on the theoretical results derived in Section \ref{sec5}. 

\section{Reaction networks}\label{sec3}
\subsection{Reaction networks and key definitions}
Let $\{S_1,\dots,S_n\}$ be a set of $n$ \textit{species} undergoing a finite number of reaction types.  We denote a particular reaction by $y \to y'$, where $y$ and $y'$ are linear combinations of the species on $\{0,1,2,\dots\}$ representing the number of molecules of each species consumed and created in one instance of that reaction, respectively. The linear combinations $y$ and $y'$ are often called \textit{complexes} of the system. For a given reaction, $y\to y'$, the complex $y$ is called  the \textit{source complex} and $y'$ is called the \textit{product complex}.  A complex can be both a source complex and a product complex.  We may associate each complex with a vector in $\mathbb{Z}^n_{\geq 0}$, whose coordinates give the number of molecules of the corresponding species in the complex. As is common in the reaction network literature, both ways of representing complexes will be used interchangeably throughout the paper. For example,  if the system has 2 species $\{S_1,S_2\}$, the reaction $S_1+S_2\to 2S_2$ has $y=S_1+S_2$, which is associated with the vector $\left[\begin{matrix}1\\1\end{matrix}\right]$, and $y'=2S_2$, which is associated with the vector $\left[\begin{matrix}0\\2\end{matrix}\right]$.  Viewing the complexes as vectors, the \textit{reaction vector} associated to the reaction $y\to y'$ is simply $y'-y\in \Z^n$, which gives the state update of the system due to one occurrence of the reaction.

\begin{definition}
For $n \ge 0$, let $\mathcal{S} =\{S_1,...,S_n\}$, $\mathcal{C}=\cup_{y\to y'}\{y,y'\}$, and $\mathcal{R}=\cup_{y\to y'}\{y\to y'\}$ be the sets of species,  complexes, and reactions respectively. The triple $\{\mathcal{S,C,R}\}$ is called a \textit{reaction network}.  When $n = 0$, in which case $\S = \C = \Re = \emptyset$, the network is termed the \textit{empty network}. \hfill $\triangle$
\end{definition}

\begin{remark}\label{remark:reactionsspecify}
It is common to assume, and we shall do so throughout, that each species of a given reaction network appears with a positive coefficient in at least one complex, and each complex takes part in at least one reaction (as either a source or a product complex).  Thus, a reaction network $\{\S,\C,\Re\}$ is fully specified if we know $\Re$. In this case, we call $\S$ and $\C$ the set of species and the set of complexes \emph{associated with} $\Re$.
\end{remark}

To each reaction network $\{\mathcal{S,C,R}\}$, there is a unique directed graph constructed in the obvious manner: the vertices of the graph are given by $\C$ and a directed edge is placed from $y$ to $y'$ if and only if $y \to y' \in \mathcal{R}$.  Figure \ref{figure1} is an example of such a graph.  
We denote by $\ell$ the number of  connected components of the graph.  Note that by definition the directed graph associated to a reaction network contains only vertices corresponding to elements in $\C$ involved in some reaction, i.e., the degree of all vertices is at least 1 and so isolated vertices are not present in the associated network. 

\begin{remark}\label{remark:LCbound}
Note that since each connected component must consist of at least two vertices, we have the bound $\ell \le \frac{|\mathcal{C}|}{2}$. 
\end{remark}

It is a common practice, which we use here, to specify a reaction network by writing all the reactions, since the sets $\S$, $\C$, and $\Re$ are contained in this description.  For example, the reaction network in Figure \ref{figure1} has the set of species $\S=\{S_1,S_2\}$, the set of vertices $\C=\{\emptyset,S_2,S_1+S_2,2S_1,2S_2\}$ and the set of reactions $\Re=\{\emptyset\to S_1+S_2, S_1+S_2\to S_2, S_2\to \emptyset, 2S1\to 2S_2, 2S_2\to 2S_1\}$.

\begin{definition}
 The linear subspace $S=\text{span}\{y'-y\}$ generated by all reaction vectors is called the \textit{stoichiometric subspace} of the network. Denote $\dim(S)=s$.\hfill $\triangle$
\end{definition}

\begin{definition} 
A vertex, $y \in \Z^n_{\ge 0}$, is called \textit{binary} if $\sum_{i = 1}^n y_i = 2$. A vertex is called \textit{unary} if $\sum_{i = 1}^n y_i = 1$. The vertex  $\vec{0} \in \Z^n$ is said to be of \textit{zeroth order}. \hfill $\triangle$
\end{definition}

\begin{definition}
A reaction network $\{\mathcal{S,C,R}\}$ is called \textit{binary} if each vertex is binary, unary, or of zeroth order.\hfill $\triangle$
\end{definition}

In later sections, we will focus on \textit{binary} reaction networks due to their relevancy in the chemistry and biology literature.

\subsection{Deficiency and related results}
\begin{definition}
The \textit{deficiency} of a reaction network $\{\mathcal{S,C,R}\}$ is $\delta=|\mathcal{C}|-\ell-s$, where $|\mathcal{C}|$ is the number of vertices, $\ell$ is the number of connected components of the associated graph, and $s = \dim(\text{span}\{y'-y: y\to y'\in \Re\})$ is the dimension of the stoichiometric subspace of the network.

For each $j\le \ell$, we let $\C_j$ denote the collection of vertices in the $j$th connected component, $s_j$ be the corresponding dimension of the span of the reaction vectors of that component, and  define $\delta_j = |\C_j| - 1 - s_j$ to be the deficiency of that component.\hfill $\triangle$
\end{definition}

\begin{remark}\label{remark:empty}
From the definition of deficiency, the empty network has deficiency zero. 
\end{remark}

As we are interested in networks with deficiency zero, it is an important fact that a deficiency zero network cannot have too many vertices. The following lemma from \cite{prevalence} gives an upper bound.

\begin{lemma}[Lemma 2.1(e) from \cite{prevalence}]\label{lemma:Cbound}
Let $n\in \N$ and let $\{\S,\C,\Re \}$ be a reaction network with $n$ species. Assume that the reaction network has deficiency zero, then 
\[
|\C| \leq 2n.
\]
\end{lemma}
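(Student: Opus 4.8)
The plan is to read the conclusion straight off the definition of deficiency, combined with two elementary bounds that are already available in the excerpt. Since the network has deficiency zero, the defining relation $\delta = |\C| - \ell - s$ collapses to the exact identity
\[
|\C| = \ell + s,
\]
so it suffices to bound the two nonnegative quantities $\ell$ and $s$ on the right-hand side in terms of $n$ and $|\C|$.

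For the stoichiometric dimension, I would observe that the stoichiometric subspace $S = \mathrm{span}\{y' - y\}$ is by definition a linear subspace of $\R^n$, whence $s = \dim(S) \le n$. For the number of linkage classes, I would invoke Remark \ref{remark:LCbound}: every linkage class contains at least two complexes, so $\ell \le |\C|/2$. Neither bound requires any structural hypothesis beyond the network having $n$ species.

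Substituting both bounds into the identity $|\C| = \ell + s$ gives
\[
|\C| \;=\; \ell + s \;\le\; \frac{|\C|}{2} + n,
\]
and rearranging yields $\tfrac{1}{2}|\C| \le n$, i.e.\ $|\C| \le 2n$, as claimed.

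Honestly, there is no real obstacle here: the entire content of the lemma is the recognition that deficiency zero forces $|\C| = \ell + s$ and that the two summands admit the clean ceilings $\ell \le |\C|/2$ and $s \le n$. The only point worth stating explicitly is that $s \le n$ comes for free from $S$ being a subspace of $\R^n$, since that is the step that converts the ambient dimension $n$ into the bound; everything else is a one-line rearrangement.
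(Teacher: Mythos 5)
Your proof is correct and is precisely the intended argument: the paper cites this lemma from its predecessor without reproving it, but it places Remark \ref{remark:LCbound} (the bound $\ell \le |\C|/2$) immediately beforehand exactly to support this computation, and combining that with $s = \dim(S) \le n$ and $|\C| = \ell + s$ is the standard one-line derivation.
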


Next, we will provide another useful fact about deficiency from \cite{prevalence}: the deficiency of a reaction network cannot decrease if we add a reaction to it. This means deficiency zero is a monotone decreasing property, which guarantees that a threshold function for deficiency zero exists (see \cite{threshold}).

\begin{lemma}[Lemma 2.1(g) from \cite{prevalence}]\label{lemma:add_reaction}
Let $R=\{\S,\C,\Re\}$ and $\widehat{R}=\{\widehat{\S},\widehat{\C},\widehat{\Re}\}$ be two reaction networks with $\widehat{\Re}\setminus \Re = \{y\to y'\}$, a single reaction. Then
\[
\delta_{\widehat{R}} \geq \delta_{R}.
\]
\end{lemma}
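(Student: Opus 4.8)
The plan is to track how each of the three ingredients of the deficiency formula $\delta = |\C| - \ell - s$ changes when the single reaction $y \to y'$ is adjoined to $G$, and to show that the net change is nonnegative. Writing $\Delta C = |\widehat\C| - |\C|$, $\Delta \ell = \widehat\ell - \ell$, and $\Delta s = \widehat s - s$, the claim $\delta_{\widehat G} \ge \delta_G$ is exactly the inequality $\Delta C - \Delta\ell - \Delta s \ge 0$. Two of these increments are bounded immediately: adding one reaction introduces at most two new complexes, so $\Delta C \in \{0,1,2\}$, and it adjoins at most one new reaction vector $y' - y$, so the stoichiometric subspace grows by at most one dimension, i.e.\ $\Delta s \in \{0,1\}$. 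The whole argument then reduces to controlling $\Delta\ell$ and coupling it correctly with $\Delta s$.

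I would organize the remaining analysis by how many of the complexes $y, y'$ already belong to $\C$, as the statement's phrasing suggests. If both are new, then $\Delta C = 2$ and the edge forms its own new linkage class $\{y,y'\}$, so $\Delta\ell = 1$ and $\Delta C - \Delta\ell - \Delta s = 1 - \Delta s \ge 0$. If exactly one is new, then $\Delta C = 1$, the new complex attaches to the existing linkage class of the old one so $\Delta\ell = 0$, and again $1 - \Delta s \ge 0$. If both already lie in $\C$, then $\Delta C = 0$; here the new edge either joins two distinct linkage classes, giving $\Delta\ell = -1$ and leaving $1 - \Delta s \ge 0$, or it connects two complexes already in the same linkage class, giving $\Delta\ell = 0$.

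The main obstacle is precisely this last subcase: with $\Delta C = 0$ and $\Delta\ell = 0$, the target inequality becomes $-\Delta s \ge 0$, forcing me to prove $\Delta s = 0$. The key observation is that if $y$ and $y'$ lie in the same linkage class then they are joined by an undirected path $y = c_0, c_1, \dots, c_k = y'$ in which each consecutive pair is a reaction of $\Re$ in one direction or the other. Telescoping, $y' - y = \sum_{i=0}^{k-1}(c_{i+1} - c_i)$ is a signed sum of reaction vectors already present in $G$, hence $y' - y \in S$, so adjoining it does not enlarge $S$; thus $\Delta s = 0$ and the inequality holds with equality. Collecting the cases establishes $\Delta C - \Delta\ell - \Delta s \ge 0$ in every situation.

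A cleaner alternative, which I would record as a remark, bypasses the casework entirely. Let $\psi : \R^{\C} \to \R^n$ send each complex (viewed as a basis vector) to its associated species vector, and set $\Delta = \mathrm{span}\{\mathbf e_{y'} - \mathbf e_y : y \to y' \in \Re\} \subseteq \R^\C$, so that $\psi(\Delta) = S$ while $\dim\Delta = |\C| - \ell$ by the standard dimension count for spans of edge differences over a graph with $\ell$ components. Applying rank-nullity to $\psi|_\Delta$ yields the identity $\delta_G = \dim(\Delta \cap \ker\psi)$. Since passing to $\widehat G$ only enlarges the ambient space and $\Delta$, while $\ker\psi = \ker\widehat\psi \cap \R^\C \subseteq \ker\widehat\psi$, the subspace $\Delta \cap \ker\psi$ sits inside $\widehat\Delta \cap \ker\widehat\psi$, and monotonicity of dimension gives $\delta_{\widehat G} \ge \delta_G$ at once.
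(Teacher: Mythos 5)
Your main argument is correct and is essentially the paper's own proof: the same four-way case split on whether $y,y'$ lie in $\C$ and whether they share a linkage class, with the telescoping-path observation supplying the $\hat s = s$ claim in the one delicate subcase (the paper states this step without the explicit path argument, but it is the same idea). The alternative remark via $\delta_G = \dim(\Delta \cap \ker\psi)$ and monotonicity of dimension is also valid and arguably cleaner, though the paper does not take that route.
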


\begin{definition}\label{def:89768968}
Let $R=\{\S,\C,\Re\}$ be a reaction network, and $\tilde{\Re}\subset \Re$. Then we denote by $\pi_{\tilde{\Re}}(R)$ the reaction network whose set of reactions is $\tilde{\Re}$, and whose species and vertices are the subsets of $\S$ and $\C$ that are associated with $\tilde{\Re}$, according to Remark \ref{remark:reactionsspecify}.  
\end{definition}

Note that in Definition \ref{def:89768968}, $\pi_{\tilde{\Re}}(R)$ can be thought of as a ``sub-network", or a projection of $R$ onto the subset of species, vertices, and reactions associated with $\tilde{\Re}$. The following corollary is a direct consequence of Lemma \ref{lemma:add_reaction}.

\begin{corollary}\label{cor:subnetwork}
Let $R=\{\S,\C,\Re\}$ be a reaction network, and $\tilde{\Re}\subset \Re$. Then
\[
\delta_{\pi_{\tilde{\Re}}(R)} \leq \delta_{R}.
\]
In particular, if $\pi_{\tilde{\Re}}(R)$ has a positive deficiency, then $R$ also has a positive deficiency.
\end{corollary}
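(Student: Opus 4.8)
The plan is to bootstrap the single-reaction result of Lemma \ref{lemma:add_reaction} into the subnetwork statement by adding reactions back one at a time. Set $m = |\Re \setminus \tilde{\Re}|$, the number of reactions present in $G$ but absent from the subnetwork. If $m = 0$ then $\tilde{\Re} = \Re$, the two networks coincide, and the inequality holds with equality. For $m \geq 1$ the idea is to rebuild $G$ from $\pi_{\tilde{\Re}}(G)$ reaction by reaction, invoking Lemma \ref{lemma:add_reaction} at each step so that the deficiency never decreases along the way.

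Concretely, I would enumerate $\Re \setminus \tilde{\Re} = \{r_1, \ldots, r_m\}$ and define the increasing chain of reaction sets $\Re_0 = \tilde{\Re}$ and $\Re_{i} = \tilde{\Re} \cup \{r_1, \ldots, r_i\}$ for $1 \le i \le m$, so that $\Re_m = \Re$. Writing $G_i := \pi_{\Re_i}(G)$, I would observe that $G_0 = \pi_{\tilde{\Re}}(G)$, that $G_m = G$ (since the species and complexes associated with $\Re$ are exactly $\S$ and $\C$ by Remark \ref{remark:reactionsspecify}), and, crucially, that $G_{i+1}$ is obtained from $G_i$ by adjoining the single reaction $r_{i+1}$. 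This last point is where the definition of the projection $\pi$ (Definition \ref{def:89768968}) must be used carefully: the species and complexes of $G_{i+1}$ are precisely those associated with $\Re_{i+1}$, namely those of $G_i$ together with any new species or complexes introduced by $r_{i+1}$. This matches exactly the hypothesis $\widehat{\Re} \setminus \Re = \{y \to y'\}$ of Lemma \ref{lemma:add_reaction}, so that lemma applies and gives $\delta_{G_i} \leq \delta_{G_{i+1}}$ for each $i$.

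Chaining these inequalities then yields
\[
\delta_{\pi_{\tilde{\Re}}(G)} = \delta_{G_0} \leq \delta_{G_1} \leq \cdots \leq \delta_{G_m} = \delta_G,
\]
which is the desired bound. The ``in particular'' clause is immediate: if $\delta_{\pi_{\tilde{\Re}}(G)} > 0$, the inequality forces $\delta_G > 0$ as well. (When $\tilde{\Re} = \emptyset$ the chain simply starts from the empty network of Remark \ref{remark:empty}, whose deficiency is zero, and the argument is unaffected.)

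I do not expect any genuine obstacle, since the statement is essentially the iterated and reversed form of Lemma \ref{lemma:add_reaction}. The only point requiring care---and the one I would state explicitly---is the bookkeeping for the projection operator: one must verify that passing from $G_i$ to $G_{i+1}$ adjoins exactly one reaction and that the associated species and complexes evolve precisely as the lemma presumes. Once this is checked, the repeated application of the lemma is routine.
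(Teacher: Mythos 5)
Your proof is correct and is exactly the argument the paper intends: the paper states the corollary as a direct consequence of Lemma \ref{lemma:add_reaction}, obtained by adding the reactions of $\Re\setminus\tilde{\Re}$ back one at a time so that the deficiency is non-decreasing along the chain. Your explicit bookkeeping of the projection $\pi$ and the edge cases ($m=0$, $\tilde{\Re}=\emptyset$) is a careful spelling-out of what the paper leaves implicit.
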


We introduce two more Lemmas related to the deficiency of a network. Their proofs are similar to the proof of Lemma \ref{lemma:add_reaction}, and thus they are omitted for the sake of brevity.  The first lemma is well-known.

\begin{lemma}\label{lemma:unary}
Let $R=\{\S,\C,\Re\}$ be a reaction network whose vertices are either unary or of zeroth order, then $\delta_R=0.$
\end{lemma}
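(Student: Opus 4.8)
The plan is to show directly that $s = |\C| - \ell$, from which $\delta_G = |\C| - \ell - s = 0$ follows at once. The essential point, and the only place the hypothesis is used, is that every complex of $G$ corresponds either to the zero vector (for $\emptyset$) or to a standard basis vector $e_i \in \R^n$ (for the unary complex $S_i$); in particular, distinct complexes map to distinct points of $\{0, e_1, \dots, e_n\}$.

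First I would decompose the stoichiometric subspace along linkage classes. Write $C_1, \dots, C_\ell$ for the vertex sets of the $\ell$ connected components of the reaction graph, and set $S^{(k)} := \mathrm{span}\{y' - y : y, y' \in C_k\}$. Connectedness of each component gives $S = S^{(1)} + \cdots + S^{(\ell)}$: every reaction vector lies in the $S^{(k)}$ of its linkage class, and conversely every difference of two complexes in a common component is a telescoping sum of reaction vectors along a connecting path, hence lies in $S$. For each $k$ set $I_k := \{i : e_i \in C_k\}$. Because the linkage classes are disjoint as subsets of $\C$, the index sets $I_1, \dots, I_\ell$ are pairwise disjoint.

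Next I would compute each $S^{(k)}$ explicitly, splitting on whether $\emptyset \in C_k$. If $\emptyset \notin C_k$, then $C_k = \{e_i : i \in I_k\}$ and $S^{(k)} = \mathrm{span}\{e_i - e_j : i,j \in I_k\}$ is the zero-sum hyperplane of the coordinate subspace indexed by $I_k$, of dimension $|I_k| - 1 = |C_k| - 1$. If $\emptyset \in C_k$, then the differences $e_i - 0 = e_i$ already appear, so $S^{(k)} = \mathrm{span}\{e_i : i \in I_k\}$ is the full coordinate subspace, of dimension $|I_k| = |C_k| - 1$. In both cases $\dim S^{(k)} = |C_k| - 1$, and, crucially, $S^{(k)}$ is supported on the coordinates indexed by $I_k$.

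Finally, since the supports $I_k$ are pairwise disjoint, the subspaces $S^{(k)}$ lie in complementary coordinate subspaces, so the sum $S = S^{(1)} + \cdots + S^{(\ell)}$ is direct. Hence $s = \dim S = \sum_{k=1}^\ell \dim S^{(k)} = \sum_{k=1}^\ell (|C_k| - 1) = |\C| - \ell$, giving $\delta_G = 0$. The step I expect to require the most care is the directness of this sum: it is exactly the unary/zeroth-order restriction that forces each $S^{(k)}$ onto its own disjoint block of coordinates. For binary complexes such as $2S_i$ or $S_i + S_j$ this support-disjointness breaks down (a linkage class need no longer be affinely independent), and the deficiency can become positive.
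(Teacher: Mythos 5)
Your proof is correct, but it takes a different route from the one the paper intends. The paper omits the proof of this lemma, indicating only that it goes ``similar to the proof of Lemma \ref{lemma:add_reaction}'', i.e.\ by adding reactions one at a time and checking in each of the four cases there that, for unary/zeroth-order complexes, the quantity $|\C|-\ell-s$ stays at zero (the key point being that whenever $|\C|-\ell$ increases by one, the new reaction vector is genuinely new to the stoichiometric subspace, so $\hat s=s+1$ rather than merely $\hat s\le s+1$). You instead give a direct global computation: you identify the stoichiometric subspace explicitly as a direct sum over linkage classes, with each class $C_k$ contributing a subspace of dimension $|C_k|-1$ supported on its own disjoint block of coordinates, so that $s=|\C|-\ell$ exactly. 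Both arguments are sound. Your version has the advantage of pinpointing precisely where the unary/zeroth-order hypothesis is used (the support-disjointness that makes the sum direct), and your closing remark correctly explains why the statement fails once binary complexes are allowed; the paper's incremental route has the advantage of reusing the case analysis already set up for Lemma \ref{lemma:add_reaction}. One small point worth making explicit in your write-up: the equality $S^{(k)}=\mathrm{span}\{y'-y: y,y'\in C_k\}$ versus the span of only the realized reaction vectors in $C_k$ is justified exactly by your telescoping observation, and it is what lets you compute $\dim S^{(k)}$ without reference to which particular edges are present --- this is the step a careless reader might conflate with the definition of $S$, so keep it stated as you have it.
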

\begin{lemma}\label{lemma:independent}
Let $R=\{\S,\C,\Re\}$ be a reaction network and let $\tilde \Re$ be a subset of $\Re$ in which precisely one reaction of each reversible pair is removed.  If $\tilde \Re$ consists of linearly independent reaction vectors,  then $\delta_R=0$.
\end{lemma}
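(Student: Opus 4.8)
The plan is to reduce the deficiency formula $\delta_G = |\C| - \ell - s$ to a statement about the underlying undirected graph of $\tilde\Re$, exploiting the linear independence hypothesis. The first observation is that passing from $\Re$ to $\tilde\Re$ leaves the stoichiometric subspace unchanged: for each reaction that was discarded, its reverse lies in $\tilde\Re$ and contributes the opposite (hence linearly dependent) reaction vector, so $\text{span}\{y'-y : y\to y' \in \Re\} = \text{span}\{y'-y : y\to y'\in \tilde\Re\}$. Consequently $s = \dim \text{span}(\tilde\Re)$, and since by hypothesis the vectors indexed by $\tilde\Re$ are linearly independent, we get $s = |\tilde\Re|$.

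Next I would read off $|\C|$ and $\ell$ through $\tilde\Re$. Viewing each reaction $y\to y'$ as an undirected edge $\{y,y'\}$ on the vertex set $\C$, note that $\tilde\Re$ retains exactly one reaction from each reversible pair together with every irreversible reaction, so the collection of unordered pairs $\{y,y'\}$ produced by $\tilde\Re$ coincides with that produced by $\Re$. Hence the undirected graph with edge set $\tilde\Re$ has the same vertex set $\C$ and the same connected components as the reaction graph of $G$; in particular it touches every complex and has exactly $\ell$ components. I would also record that $\tilde\Re$ is a \emph{simple} graph, since two distinct reactions can share the same unordered complex pair only if they form a reversible pair, and at most one of those survives in $\tilde\Re$.

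The heart of the argument is then to show that this graph is a spanning forest, i.e.\ that $|\tilde\Re| = |\C|-\ell$. For the bound $|\tilde\Re| \le |\C|-\ell$ I would use linear independence: a cycle $y_0, y_1, \dots, y_k=y_0$ with distinct edges would give $\sum_{i=1}^{k}(y_i - y_{i-1}) = 0$, and since each $y_i - y_{i-1}$ equals $\pm$ the reaction vector of a distinct edge in $\tilde\Re$, this is a nontrivial vanishing combination, contradicting independence; thus the graph is acyclic, hence a forest, which on $|\C|$ vertices with $\ell$ components has $|\C|-\ell$ edges. The matching lower bound $|\tilde\Re| \ge |\C|-\ell$ is immediate from the fact that $\tilde\Re$ connects the complexes within each linkage class, so equality holds. (Alternatively, one may invoke the standard rank bound $s \le |\C|-\ell$, which together with $|\tilde\Re| = s$ and connectivity $|\tilde\Re|\ge |\C|-\ell$ already forces equality without explicitly arguing acyclicity.)

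Combining the two computations yields $s = |\tilde\Re| = |\C|-\ell$, whence $\delta_G = |\C|-\ell-s = 0$. The only genuinely delicate step is the middle translation between the two pictures: verifying that the undirected graph of $\tilde\Re$ faithfully encodes the linkage-class structure of $G$, and that linear independence of the reaction vectors is exactly the acyclicity of this graph. Once those graph-to-linear-algebra correspondences are pinned down, the deficiency count is an immediate arithmetic cancellation.
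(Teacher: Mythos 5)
Your proof is correct. The paper omits its own proof of this lemma, stating only that it is ``similar to the proof of Lemma \ref{lemma:add_reaction}'' --- i.e.\ an inductive argument in which the reactions of $\tilde\Re$ are added one at a time and the increments of $|\C|$, $\ell$, and $s$ are tracked through the case analysis of that lemma (linear independence rules out Case~1 and forces $s$ to increase by exactly one at each step, so that $|\C|-\ell-s$ remains $0$; re-inserting the reverse reactions is then Case~1 and changes nothing). You instead give a single global counting argument: $s=|\tilde\Re|$ by independence (after noting the reverses do not enlarge the span), and the underlying undirected simple graph of $\tilde\Re$ is a spanning forest of the reaction graph --- acyclic because a cycle would telescope to a nontrivial vanishing $\pm1$ combination of distinct reaction vectors --- so $|\tilde\Re|=|\C|-\ell$. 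The two routes are really the incremental and global faces of the same fact (a forest is exactly what edge-by-edge addition without closing cycles produces), but yours has the advantage of being self-contained and of making explicit the two translations the inductive version leaves implicit: that the unordered edges of $\tilde\Re$ reproduce the linkage-class structure of $\Re$ exactly (you correctly check simplicity, i.e.\ that no two retained reactions share an unordered pair), and that linear independence of the reaction vectors is precisely acyclicity of this graph. I find no gap.
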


We illustrate the concept of deficiency with some reaction networks taken from the biology and chemistry literature.
\begin{example}[Enzyme kinetics \cite{ACK:product}]	
\begin{align*}
S+E &\leftrightharpoons  SE \leftrightharpoons P+E\\
&E \leftrightharpoons \emptyset \leftrightharpoons S.
\end{align*}
In this example, the reaction network has $|\mathcal{C}|=6$ vertices, there are $l=2$ connected components, and the dimension of the stochiometric subspace is $s=4$. Thus the deficiency is
\[
\delta = 6-2-4=0.
\]
\hfill $\triangle$
\end{example}
\begin{example}[Futile cycle enzyme \cite{WS2008}]
\begin{align*}
S+E \leftrightharpoons SE \rightarrow P+E\\
P+F \leftrightharpoons PF \rightarrow S+F.
\end{align*}
In this example, the reaction network has $|\mathcal{C}|=6$ vertices, there are $l=2$ connected components and the dimension of the stochiometric subspace can be calculated, which yields $s=3$. Thus the deficiency is
\[
\delta = 6-2-3 =1.
\]  \hfill $\triangle$
\end{example}
\section{A stochastic block model framework for binary CRNs}\label{sec4}
In this section we setup a stochastic block model for generating random reaction networks.

Let the set of species be $\mathcal{S}=\{S_1,S_2,\dots,S_n\}$. We consider binary reaction networks with species in $\mathcal{S}$. The set of all possible vertices is then
\[
\C^0_n = \{\emptyset, S_i, S_i+S_j:  \text{for  $1 \le i \le n$ and $1 \le j \le n$}.\}
\] 
For a given $n$, we denote  $N_n = |\mathcal{C}_n^0|$, the cardinality of $\mathcal{C}_n^0$.  Thus, $N_n$ is  the total number of possible unary, binary, and zeroth order vertices that can be generated from $n$ distinct species.  A straightforward calculation yields
\[
	N_n=1+n+n+\frac{n(n-1)}{2}= \frac{n^2+3n+2}{2},
\]
and so
\[
n \sim \sqrt{2N_n}.
\]

\begin{definition}
We denote by $E^{0,1}_n, E^{0,2}_n, E^{1,1}_n, E^{1,2}_n, E^{2,2}_n$ the sets of edges, or reactions, as follows: 
\begin{align*}
&E^{0,1}_n=\{\emptyset \leftrightharpoons S_i: 1\leq i\leq n\}\\
&E^{0,2}_n=\{\emptyset \leftrightharpoons S_i+S_j: 1\leq i,j\leq n\}\\
&E^{1,1}_n=\{S_i\leftrightharpoons S_j: 1\leq i,j\leq n; i\neq j\}\\
&E^{1,2}_n=\{S_i\leftrightharpoons S_j+S_k: 1\leq i,j,k\leq n\}\\
&E^{2,2}_n=\{S_i+S_j\leftrightharpoons S_h+S_k: 1\leq i,j,k,h\leq n; (i,j)\neq (k,h); (i,j) \neq (h,k)\}.
\end{align*}
\end{definition}
\begin{remark}\label{sizeEij}
$E^{0,1}_n, E^{0,2}_n, E^{1,1}_n, E^{1,2}_n, E^{2,2}_n$ completely partition the set of all possible edges. Note that $|E^{0,1}_n|\sim n$, $|E^{1,1}_n| \sim |E^{0,2}_n|\sim n^2$, $|E^{1,2}_n|\sim n^3$ and $|E^{2,2}_n|\sim n^4$. In fact, we have $|E^{i,j}_n|\sim n^{i+j}$.  Finally, note that the terms \emph{edges} and \emph{reactions} can be used interchangeably in the present context.
\end{remark}

We then consider a randomly generated network $G(N_n,p_n)$, which we will simply denote $G_n$ throughout, where the set of vertices is the set of vertices $\mathcal{C}_n^0$, and the probability that there is an edge between two vertices is given as follows
\begin{enumerate}
    \item an edge in $E^{0,1}_n$ appears in the random graph with probability $p_n^{0,1}=n^{\alpha_{0,1}}p_n$,
    \item an edge in $E^{0,2}_n$ appears in the random graph with probability $p_n^{0,2}=n^{\alpha_{0,2}}p_n$,
    \item an edge in $E^{1,1}_n$ appears in the random graph with probability $p_n^{1,1}=n^{\alpha_{1,1}}p_n$,
    \item an edge in $E^{1,2}_n$ appears in the random graph with probability $p_n^{1,2}=n^{\alpha_{1,2}}p_n$,
    \item an edge in $E^{2,2}_n$ appears in the random graph with probability $p_n$,
\end{enumerate}
where $\alpha_{0,1},\alpha_{0,2},\alpha_{1,1},\alpha_{1,2}$ are parameters that can be used to control the structure of the random graph. 
Each random graph now corresponds to a reaction network in the following way,
\begin{enumerate}
\item each vertex with positive degree in the random graph represents a vertex in the reaction network graph, and
\item each edge in the random graph represents a reaction in the reaction network graph. We can assume all reactions are reversible, i.e., that $y\to y'\in \Re \implies y'\to y \in \Re$, since deficiency does not depend on the direction of the edges.
\end{enumerate}

We will denote the reaction network associated with the graph $G(N_n,p_n)$ by $R(N_n,p_n)$, which we will often simplify to $R_n.$  We will denote the deficiency of $R_n$ by $\delta_{R_n}.$
\begin{remark}
In later sections it will be more useful to work with the expected and actual number of edges in each set $E^{i,j}_n$ instead of $p^{i,j}_n$. Thus, for convenience we denote by $M_{i,j}(n)$ the number of realized edges from $E^{i,j}_n$ and by $K_{i,j}(n)=\E [M_{i,j}(n)]$  the expected number of realized edges from $E^{i,j}_n$. It is straightforward to see that $M_{i,j}(n)$ has a binomial distribution, and from Remark \ref{sizeEij} that 
\[
K_{i,j}(n)\sim n^{i+j}n^{\alpha_{i,j}}p_n
\]
for $(i,j)\neq (2,2)$ and $K_{2,2}(n)=n^4p_n$. 
\end{remark}
With the stochastic block model above, we can model a wide range of reaction networks by tweaking the parameters $\{\alpha_{i,j}\}$. Next, we provide a few examples to illustrate this flexibility. 
\begin{example}[The case $\alpha_{0,1}=\alpha_{0,2}=\alpha_{1,1}=\alpha_{1,2}=0$]

In this case, we recover the unweighted Erd\H os-R\'enyi framework in \cite{prevalence}. From \cite{prevalence}, and Theorem \ref{theorem:pn} below, the threshold function for deficiency zero is $r(n) = \frac1{n^3}$. In other words,
\[
\lim_{n\to\infty}P(\delta_{R_n}=0) = \begin{cases}
0 \quad \text{when} \quad \lim_{n\to\infty}\frac{p_n}{r(n)} = \infty\\
1 \quad \text{when} \quad \lim_{n\to\infty}\frac{p_n}{r(n)} = 0
\end{cases}.
\]
Lemma 4.2 and Lemma 4.3 in \cite{prevalence} then tell us that for $\lim_{n\to\infty}\frac{p_n}{r(n)} = 0$, the random reaction networks we observe only contain edges from $E^{2,2}_n$ with high probability. In other words, with the unweighted framework, we only see deficiency zero in ``closed systems" (reaction networks with no inflow and outflow) of a very particular type. Reactions such as inflow and outflow, unary-unary, and unary-binary  are underrepresented in this case.
\hfill $\triangle$
\end{example}

\begin{example}[A closed system with $\alpha_{0,1}=\alpha_{0,2}=0, \alpha_{1,1}=2, \alpha_{1,2}=1$]

In this case, we have the expected number of edges in $E^{0,1}_n$ is $K_{0,1}(n)\sim np_n$ and the expected number of edges in $E^{0,2}_n$ is $K_{0,2}(n) \sim n^2p_n$. It is easy to check that the parameters $\alpha_{i,j}$ are selected such that 
\[
K_{1,1}(n)\sim K_{1,2}(n)\sim K_{2,2}(n) \sim n^4p_n \quad \text{and} \quad K_{0,1}(n), K_{0,2}(n) \ll n^4p_n.
\]
Thus the random reaction networks we observe will have similar expected amount of reactions in $E^{1,1}_n$, $E^{1,2}_n$, $E^{2,2}_n$.  We also have  that the expected number of reactions in $E^{0,1}_n$ and $E^{0,2}_n$ is significantly less. In particular, if $p_n\ll \frac{1}{n^2}$, the probability of seeing any reaction in $E^{0,1}_n$ and $E^{0,2}_n$ goes to $0$ as $n\to\infty$.  Hence, the random networks we observe will not have inflow and outflow reactions with high probability. Thus, this scheme is suitable to model closed systems without underrepresenting unary-unary and unary-binary reactions, unlike the case in Example 3. From Theorem \ref{theorem:pn} below, the threshold function for this case is $r(n) = \frac{1}{n^3}$ \hfill $\triangle$
\end{example}

\begin{example}[An open system with $\alpha_{0,1}=3$, $\alpha_{1,1}=\alpha_{0,2}=2$, $\alpha_{1,2}=1$]

In this case, the expected number of realized edges $K_{i,j}(n) \sim n^4p_n$ for all $(i,j)$. Thus, this scheme is suitable to model an ``open system" with inflow and outflow reactions, and with similar amount of reactions from each type. See Figure \ref{fig:realization} for a realization of this system with a specific choice of parameters. From Theorem \ref{theorem:pn} below, the threshold function for this case is $r(n) = \frac{1}{n^{10/3}}$.
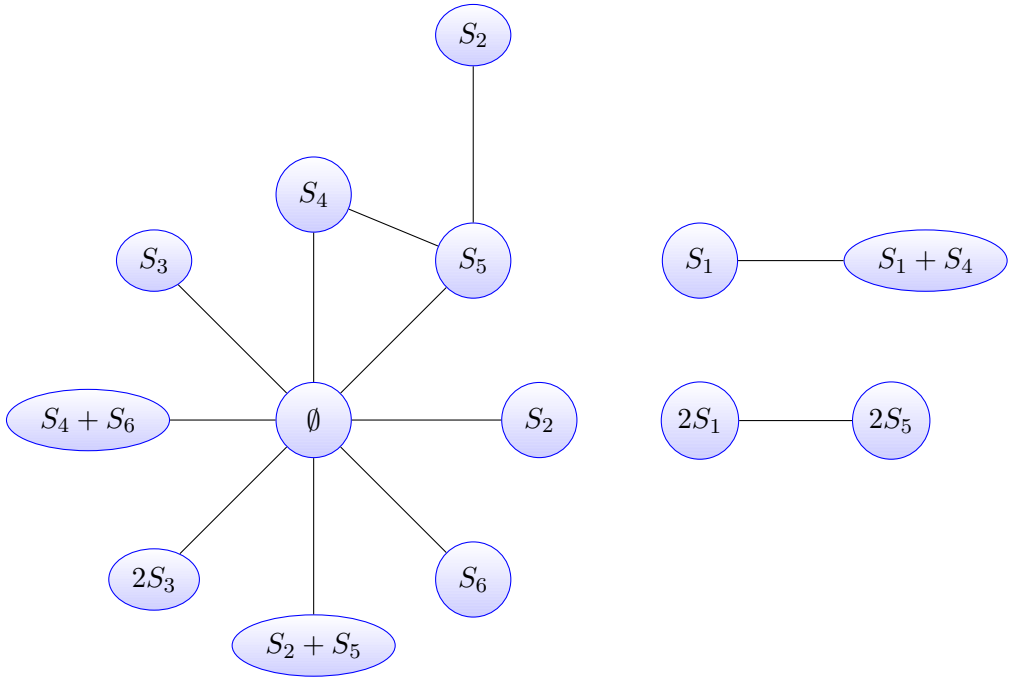
\begin{figure}[h]
\begin {tikzpicture}[node distance=3cm, state1/.style ={ circle ,top color =white , bottom color = blue!20 ,draw,blue , text=black , minimum width =1 cm},state2/.style ={ ellipse ,top color =white , bottom color = blue!20 ,draw,blue , text=black , minimum width =1 cm}]
\node[state1] (0) {$\emptyset$};
\node[state1] (S4) [above of= 0] {$S_4$};
\node[state1] (S5) [above right of=0] {$S_5$};
\node[state1] (S2) [right of=0] {$S_2$};
\node[state1] (S6) [below right of=0] {$S_6$};
\node[state2] (S2+S5) [below of= 0] {$S_2+S_5$};
\node[state2] (2S3) [below left of=0] {$2S_3$};
\node[state2] (S4+S6) [left of=0] {$S_4+S_6$};
\node[state2] (S3) [above left of=0] {$S_3$};
\draw (0) -- (S4);
\draw (0) -- (S5);
\draw (0) -- (S2);
\draw (0) -- (S6);
\draw (0) -- (S2+S5);
\draw (0) -- (2S3);
\draw (0) -- (S4+S6);
\draw (0) -- (S3);
\node[state2] (2S2) [above of=S5] {$S_2$};
\draw (2S2) -- (S5);
\draw (S4) -- (S5);

\node[state1] (S1) [right= 2cm of S5] {$S_1$};
\node[state2] (S1+S4) [right of=S1] {$S_1+S_4$};
\draw (S1) -- (S1+S4);

\node[state1] (2S1) [below=1.1cm of S1] {$2S_1$};
\node[state1] (2S5) [right=1.5cm of 2S1] {$2S_5$};
\draw (2S1) -- (2S5);

\end{tikzpicture}
\caption{A realization of the open system in Example 5 with $n=6$ and $p=\frac{0.8}{n^3}$. \textit{Note: The figure only includes non-isolated vertices.}}
\label{fig:realization}
\end{figure}
\hfill $\triangle$
\end{example}

\section{Conditions for deficiency zero in terms of $K_{i,j}(n)$}\label{sec5}

In this section, we will provide a set of conditions on $K_{i,j}(n)$ that guarantee $\lim_{n\to \infty}\P(\delta_{R_n}=0)=0$. We will also show that under the ``converse" of these conditions, $\lim_{n\to \infty}\P(\delta_{R_n}=0)=1$. These conditions are used in Section \ref{sec6} to form an algorithm to find the threshold function for deficiency zero. Specifically, given any choice of $\{\alpha_{i,j}\}$, the algorithm provides a single threshold function $r(n)$ for deficiency zero.

\subsection{Conditions on $K_{i,j}(n)$ for  $\lim_{n\to \infty}\P(\delta_{R_n}=0)=0$}
We start this section by providing some examples which illustrate different ways to break deficiency zero.
\begin{example}
Consider a reaction network with only 2 species $\S=\{S_1,S_2\}$
\begin{align*}
&\S_1 \leftrightharpoons S_2\\
&\S_1+S_2 \leftrightharpoons \emptyset.
\end{align*}
The reaction network has deficiency 
\[
\delta = |\C|-\ell-s = 4- 2 -2 =0.
\]
However, since there are only 2 species, we must have $s\leq 2$. Thus if we add more vertices and reactions, it is easy to get a positive deficiency from the new reaction network. For example, if we add $2S_1\leftrightharpoons S_2$, then the new network is 
\begin{align*}
&\S_1 \leftrightharpoons S_2 \leftrightharpoons 2S_1\\
&\S_1+S_2 \leftrightharpoons \emptyset,
\end{align*}
and the new deficiency is $\delta' = |\C'|-\ell'-s' = 5-2-2= 1$. In this example, we break deficiency zero by having too many vertices with respect to the number of species. \hfill $\triangle$
\end{example}
\begin{example}
Consider a reaction network with $10$ species $\S=\{S_1,\dots,S_{10}\}$, which is given below
\begin{align*}
&\S_1 \leftrightharpoons S_2 \leftrightharpoons \dots \leftrightharpoons S_{10}.
\end{align*}
The reaction network has deficiency
\[
\delta = |\C|-\ell-s = 10- 1 - 9 =0.
\]
Note that all unary vertices are already in the network, and the dimension of the stochiometric subspace, which is 9, is nearly at the maximum possible value of 10. If we add one or two more reactions in $E^{1,2}_n,E^{0,2}_n,$ or $E^{2,2}_n$, then it is easy to break deficiency zero since the dimension of the original network is almost at its maximum. For example, if we add $S_1+S_2\to S_3+S_4$, then the new network is 
\begin{align*}
&\S_1 \leftrightharpoons S_2 \leftrightharpoons \dots \leftrightharpoons S_{10}\\
& S_1+S_2\to S_3+S_4
\end{align*}
and the new deficiency is $\delta' = |\C'|-\ell'-s' = 12 - 2 - 9 = 1$. In this example, we break deficiency zero by adding too many more reactions when the dimension of the stoichiometric subspace is already nearly full from the unary reactions. \hfill $\triangle$
\end{example}
\begin{example}
Consider a reaction network with $10$ species $\S=\{S_1,\dots,S_{10}\}$, and a high number of reactions in $E^{0,1}_n$
\begin{align*}
\emptyset \leftrightharpoons S_i \quad \text{where} \quad i=1,\dots,8.
\end{align*}
The reaction network has deficiency 
\[
\delta = |\C|-\ell-s = 9- 1 - 8 =0.
\]
If we add a high enough number of reactions in $E^{1,2}_n,E^{0,2}_n,$ or $E^{2,2}_n$, then it is likely that we add a reaction whose species are in $\{S_1,\dots,S_8\}$, which breaks deficiency zero. For example, consider the new network
\begin{align*}
&\emptyset \leftrightharpoons S_i \quad \text{where} \quad i=1,\dots,8\\
& S_1+S_2\leftrightharpoons S_9\\
& S_3+S_4\leftrightharpoons S_7.
\end{align*}
The new deficiency is $\delta' = |\C'|-\ell'-s' = 12 - 2 - 9 = 1$. In this example, we break deficiency zero by having a high number of reaction in $E^{0,1}_n$ and a high enough number of reaction in $E^{1,2}_n,E^{0,2}_n,$ or $E^{2,2}_n$.  \hfill $\triangle$
\end{example}
It turns out that the three examples above are representative of all cases when we have $\lim_{n\to \infty}\P(\delta_{R_n}=0)=0$. We provide rigorous conditions in the following theorem. 
\begin{theorem}\label{main1}
If one of the following conditions holds, then $\lim_{n\to \infty}\P(\delta_{R_n}=0)=0$.
\begin{enumerate}[leftmargin]
    \item[(C1.1)] Either $K_{0,2}(n) \gg n$, $K_{1,2}(n) \gg n$, or $K_{2,2}(n) \gg n$. 
    \item[(C1.2)] $K_{1,1}(n)\gg n$ and either $K_{0,2}(n) \gg 1$, $K_{1,2}(n) \gg 1$, or $K_{2,2} (n)\gg 1$.
    \item[(C1.3)] Either $K_{0,1}(n)^2K_{0,2}(n) \gg n^2$, $K_{0,1}(n)^3K_{1,2}(n) \gg n^3$, or $K_{0,1}(n)^4K_{2,2}(n) \gg n^4$.
\end{enumerate}
\end{theorem}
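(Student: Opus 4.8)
The plan is to handle the three conditions separately, in each case producing, with probability tending to $1$, a sub-collection $\tilde\Re$ of the realized reactions whose induced sub-network $\pi_{\tilde\Re}(G_n)$ already has positive deficiency; by Corollary~\ref{cor:subnetwork} this forces $\delta_{G_n}>0$, and hence $\P(\delta_{G_n}=0)\to 0$. A recurring and convenient fact, read off from the case analysis in the proof of Lemma~\ref{lemma:add_reaction}, is that if one starts from a deficiency-zero network and adjoins a single reaction whose reaction vector already lies in the current stoichiometric subspace (and which is not internal to a single linkage class), then the deficiency goes up by exactly $1$. I will also use freely that edges in distinct classes $E^{i,j}_n$ are independent, so that I may condition on one class at a time.

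For (C1.1) I would argue through the complex count, as in Example~6: by Lemma~\ref{lemma:Cbound} it suffices to show $|\C|>2n$ with high probability, and since adjoining reactions only creates complexes I may restrict attention to the single class whose expected size exceeds $n$. For $E^{0,2}_n$ this is immediate, since each realized edge $\emptyset\leftrightharpoons S_i+S_j$ contributes its own distinct binary complex, whence $|\C|\ge M_{0,2}(n)$ and $K_{0,2}(n)\gg n$ forces $M_{0,2}(n)>2n$ by binomial concentration. For $E^{1,2}_n$ and $E^{2,2}_n$ distinct edges may share a binary complex, so instead I would count the number $B$ of distinct binary complexes that occur, $B=\sum_{c}\mathbf 1[c\text{ appears}]$. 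A first-moment estimate gives $\E B\gg n$ (indeed $\E B$ is of order $\min(n^2,K_{1,2}(n))$, resp.\ $\min(n^2,K_{2,2}(n))$), and since changing one edge alters $B$ by at most two, a bounded-differences concentration inequality yields $B>2n$, hence $|\C|>2n$, with high probability.

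For (C1.2) I would follow Example~7. When $K_{1,1}(n)\gg n$ the realized $E^{1,1}_n$ reactions form an Erd\H os--R\'enyi graph on the species whose mean degree tends to infinity, so with high probability it has a connected component $C$ on $(1-o(1))n$ species; restricted to $C$, the $E^{1,1}_n$ reaction vectors span the whole ``mass-zero'' space $\{v:\mathrm{supp}\,v\subseteq C,\ \sum_i v_i=0\}$ and, by Lemma~\ref{lemma:unary}, form a deficiency-zero sub-network. A single $E^{2,2}_n$ reaction all of whose four species lie in $C$ has a mass-zero reaction vector, hence lies in that span, and so raises the deficiency to $1$ by the fact recalled above. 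For $E^{0,2}_n$ and $E^{1,2}_n$ the reaction vector is not mass-zero, so instead I would use two such reactions: their vectors differ by a mass-zero vector supported on $C$, so the second lies in the span once the first has been adjoined, again raising the deficiency. Conditioning on $C$, the qualifying higher-order edges are independent, and the expected number of them is of order $\theta^{k}K_{i,j}(n)\gg 1$ with $\theta=|C|/n\to 1$; a second-moment argument then guarantees the needed one (or two) reactions with high probability.

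For (C1.3) I would follow Example~8. The realized $E^{0,1}_n$ reactions $\emptyset\leftrightharpoons S_i$ span the coordinate subspace $\R^{U}$, where $U$ is the random set of their species, with $|U|=M_{0,1}(n)$, and they form a deficiency-zero sub-network. Any single higher-order reaction whose species all lie in $U$ then has its reaction vector inside $\R^{U}$, hence inside the current span, and so raises the deficiency to $1$. Conditioning on $U$, the expected number of such qualifying reactions is of order $K_{0,1}(n)^2K_{0,2}(n)/n^2$, $K_{0,1}(n)^3K_{1,2}(n)/n^3$, and $K_{0,1}(n)^4K_{2,2}(n)/n^4$ in the three sub-cases---precisely the quantities assumed to diverge---so a second-moment argument again produces such a reaction with high probability. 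I expect the main obstacle throughout to be exactly these second-moment estimates, that is, upgrading ``the expected number of deficiency-breaking configurations diverges'' to ``one exists with high probability.'' The care needed is in bounding the correlations between overlapping configurations (reactions sharing species or complexes) and in handling the two-stage randomness---first the $E^{1,1}_n$ or $E^{0,1}_n$ skeleton, then the higher-order reactions---which I would control by conditioning on the skeleton together with the concentrated value of $|C|$ or $|U|$, after which the higher-order indicators become independent and Chebyshev's inequality applies.
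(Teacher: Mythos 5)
Your overall strategy coincides with the paper's: the same three-way case split, the complex-count bound of Lemma \ref{lemma:Cbound} for (C1.1), the largest component of the $E^{1,1}_n$ graph together with the deficiency bookkeeping of Proposition \ref{prop:unary+2} for (C1.2), and the coordinate subspace spanned by the realized inflow reactions for (C1.3). The differences are only technical: for (C1.2) the paper picks two realized $E^{0,2}_n$ reactions uniformly at random and bounds the probability that all their species land in the largest component (conditioning on its size and applying Jensen), rather than running a second-moment count of qualifying reactions; for (C1.3) it avoids conditioning on concentration events altogether by bounding $\E\big[(1-M_{0,1}(n)(M_{0,1}(n)+1)/(n(n+1)))^{M_{0,2}(n)}\big]$ directly via $e^{-x}\le 1/(1+x)$ and Lemma \ref{lemma:binomial}, obtaining the explicit bound $8n^2/(K_{0,1}(n)^2K_{0,2}(n))$. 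Your variants are workable; note that under (C1.3) your concentration of $M_{0,1}(n)$ needs $K_{0,1}(n)\to\infty$, which does hold because $K_{0,2}(n)\le n(n+1)/2$ forces $K_{0,1}(n)^2\gg n^2/K_{0,2}(n)\ge 2n/(n+1)$.

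The one step that does not go through as written is the concentration claim in (C1.1) for $E^{1,2}_n$ and $E^{2,2}_n$. Bounded differences applied to $B$ as a function of the $\sim n^3$ (resp.\ $\sim n^4$) independent edge indicators gives a tail bound of the form $\exp(-c(\E B)^2/n^3)$ (resp.\ $\exp(-c(\E B)^2/n^4)$), which is vacuous whenever $n\ll \E B\ll n^{3/2}$ (resp.\ $n\ll \E B\ll n^{2}$), e.g.\ when $K_{1,2}(n)=n^{5/4}$. The repair is essentially what the paper does: for $E^{1,2}_n$ the indicators $\mathbf{1}[c\text{ appears}]$ over distinct binary complexes $c$ depend on disjoint edge sets and are therefore independent, so $B$ (equivalently, the count of isolated binary vertices used in the paper) is binomial, $\var(B)\le \E B$, and Chebyshev suffices; for $E^{2,2}_n$ two distinct binary complexes share exactly one edge, and a direct covariance bound (or the isolated-vertex argument of Theorem 5.1 of \cite{prevalence}, which the paper invokes) yields $\var(B)\ll(\E B)^2$. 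Since you already identified the second-moment estimates as the main obstacle, this is a local fix rather than a change of approach.
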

\begin{remark}
In Theorem \ref{main1}, the three conditions are not purely technical; there is intuition behind each condition as described in the examples at the beginning of this section, and below.
\begin{enumerate}
    \item Condition C1.1 refers to the case when there are too many vertices in the reaction network, which makes its deficiency strictly positive (see Lemma \ref{lemma:Cbound}). Note that $K_{0,1}(n) \gg n$ and $K_{1,1}(n) \gg n$ can not break deficiency zero in this regard. Obviously, it is impossible to have $K_{0,1}(n) \gg n$ since $|E^{0,1}_n|=n$. The condition $K_{1,1}(n) \gg n$ by itself still results in the network being deficiency zero (see Lemma \ref{lemma:unary}). However, the condition $K_{1,1}(n) \gg n$ together with a non-trivial number of reactions from $E^{0,2}_n, E^{1,2}_n, E^{2,2}_n$ can break deficiency zero. This is stated formally in Condition C1.2.
    
    \item Condition C1.2 refers to the case when the dimension of the stochiometric subspace $s$ is almost fully exhausted from reactions in $E^{1,1}_n$. Recall that $\delta = |\C|-\ell-s$, so in this case as we add more reactions in $E^{0,2}_n, E^{1,2}_n, E^{2,2}_n$ , $|\C|-\ell$ increases but $s$ does not, making the deficiency positive.
    \item Condition C1.3 refers to the case where there is a high probability of some inflow or outflow reaction in $E^{0,1}_n$ and a reaction in another edge set being linearly dependent, which in turn makes the deficiency positive. It will also be apparent later that having a nontrivial number of inflow or outflow reactions in $E^{0,1}_n$ makes it more difficult to have deficiency zero.
\end{enumerate}
\end{remark}

We prove the theorem  via a series of lemmas. We begin by showing that if Condition (C1.1) holds, then $\lim_{n\to \infty}\P(\delta_{R_n}=0)=0$. 

\begin{lemma}
If either $K_{0,2}(n) \gg n$, $K_{1,2}(n) \gg n$, or $K_{2,2}(n) \gg n$, then we have
\[
\lim_{n\to \infty}\P(\delta_{R_n}=0)=0.
\]
\end{lemma}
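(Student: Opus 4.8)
The plan is to derive a positive deficiency from an overabundance of complexes, using Lemma \ref{lemma:Cbound}. That lemma yields the containment $\{\delta_{G_n}=0\}\subseteq\{|\C|\le 2n\}$, so it suffices to show that $|\C|>2n$ with probability tending to $1$ under each of the three hypotheses. Every realized reaction in $E^{0,2}_n$, $E^{1,2}_n$, or $E^{2,2}_n$ introduces at least one binary complex, so I would lower bound $|\C|$ by the number $B_{i,j}$ of \emph{distinct} binary complexes appearing as an endpoint of a realized edge of the relevant class, and prove $B_{i,j}>2n$ with high probability whenever $K_{i,j}(n)\gg n$. Throughout, the delicate point is that for $E^{1,2}_n$ and $E^{2,2}_n$ many edges can share a binary complex, so one cannot simply count edges: the argument must track distinct complexes and supply concentration that independence gives for free only in the easier cases.

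The case $K_{0,2}(n)\gg n$ is immediate: distinct edges of $E^{0,2}_n$ have distinct binary complexes as their non-$\emptyset$ endpoint, so $B_{0,2}=M_{0,2}(n)$, which is Binomial with mean $K_{0,2}(n)\gg n$ and $\var(M_{0,2}(n))\le K_{0,2}(n)$. Since $K_{0,2}(n)-2n\ge K_{0,2}(n)/2$ for large $n$, Chebyshev's inequality gives $\P(M_{0,2}(n)\le 2n)\le 4/K_{0,2}(n)\to 0$.

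For $K_{1,2}(n)\gg n$, I would write $B_{1,2}=\sum_{c}\mathbb{1}_c$, summing over the $\sim n^2/2$ binary complexes $c$, where $\mathbb{1}_c$ indicates that some reaction $S_i\leftrightharpoons c$ is realized. Each binary complex is incident to exactly $n$ edges of $E^{1,2}_n$, and these incidence sets are disjoint for different $c$, so the indicators $\mathbb{1}_c$ are \emph{independent}; hence $B_{1,2}$ is itself Binomial, with mean $(n^2/2)\bigl(1-(1-p_n^{1,2})^n\bigr)$. A routine estimate (splitting on whether $np_n^{1,2}\le 1$) shows this mean is $\gg n$ whenever $K_{1,2}(n)=n^3 p_n^{1,2}(1+o(1))\gg n$, and Chebyshev again forces $B_{1,2}>2n$ with high probability.

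The case $K_{2,2}(n)\gg n$ is the main obstacle and needs a genuine second-moment argument, since the indicators $\mathbb{1}_c$ in $B_{2,2}=\sum_c\mathbb{1}_c$ are now positively correlated: the single reversible edge joining $c$ to $c'$ lies in both incidence sets. Conditioning on that shared edge, the remaining incidences of $c$ and of $c'$ become independent, and a direct computation bounds $\mathrm{Cov}(\mathbb{1}_c,\mathbb{1}_{c'})\lesssim p_n(1-q)^2$, where $q=\P(\mathbb{1}_c=1)=1-(1-p_n)^{d}$ and $d\sim n^2/2$ is the number of $E^{2,2}_n$ edges at $c$. Using $\sum_c\var(\mathbb{1}_c)\le\E B_{2,2}\gg n$ for the diagonal, and summing the off-diagonal terms over the $\sim n^4$ pairs against $(\E B_{2,2})^2\sim (n^2 q/2)^2$, the decisive ratio reduces to $p_n(1-q)^2/q^2$. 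Checking the regimes $n^2p_n\to 0$, $n^2p_n\to c$, and $n^2p_n\to\infty$ separately—using $K_{2,2}(n)=n^4p_n\gg n$ in the sparse regime and $q\to 1$ in the dense one—shows this ratio vanishes, so $\var(B_{2,2})=o\bigl((\E B_{2,2})^2\bigr)$ and Chebyshev yields $\P(B_{2,2}\le 2n)\to 0$, completing the proof.
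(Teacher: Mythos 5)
Your proposal is correct and follows essentially the same route as the paper: reduce via Lemma \ref{lemma:Cbound} to showing the number of complexes exceeds $2n$ with high probability, then handle $E^{0,2}_n$ by a direct Chebyshev bound on $M_{0,2}(n)$ and the other two classes by counting covered (equivalently, isolated) binary vertices with first- and second-moment arguments. The only difference is presentational: the paper defers the $E^{1,2}_n$ and $E^{2,2}_n$ cases to the technique of Theorem 5.1 of \cite{prevalence}, whereas you spell out the independence of the incidence indicators and the covariance bound explicitly.
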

\begin{proof} 
Recall from Lemma \ref{lemma:Cbound} that there cannot be too many 
vertices in a network with deficiency zero. In particular, $\delta_{R_n}=0$ implies $|\C| \leq 2n$. We will argue that in each of the three cases the number of non-isolated vertices in $G_n$, which correspond with the vertices of the associated reaction network $R_n$, is likely to be much higher than the bound $2n$, implying the network has positive deficiency.
The first case is straightforward, and the remaining two cases follow the same technique as Lemma 4.1 and Theorem 4.1 in \cite{prevalence}.
\begin{enumerate}
\item First, we assume that $K_{0,2}(n) \gg n$.
From Lemma \ref{lemma:Cbound}, we have that $\delta_{R_n}=0$ implies $|\C| \leq 2n$, which in turns implies $M_{0,2}(n) \leq 2n-1$.  Thus
\begin{align*}
\P(\delta_{R_n} =0) &\leq \P(M_{0,2}(n) \leq 2n-1)\\
& = \P(K_{0,2}(n) - M_{0,2}(n) \geq K_{0,2}(n) - (2n-1))\\
&\leq \frac{\var (M_{0,2}(n))}{(K_{0,2}(n) - (2n-1))^2}.
\end{align*}
Since $M_{0,2}(n)$ has a binomial distribution, $\var (M_{0,2}(n)) \leq \E[M_{0,2}(n)]=  K_{0,2}(n)$. Together with the fact that $K_{0,2}(n) \gg n$, we have $\frac{\var (M_{0,2}(n))}{(K_{0,2}(n) - (2n-1))^2}\to 0$, as $n\to\infty$, and thus $\lim_{n\to \infty}\P(\delta_{R_n}=0)=0$.

\item 
Next, we assume  $K_{1,2}(n) \gg n$.  We observe that based on Corollary \ref{cor:subnetwork}, $\delta_{R_n}=0$ must imply $\delta_{\pi_{E^{1,2}_n}(R_n)} =0$, where, recalling Definition \ref{def:89768968}, $\pi_{E^{1,2}_n}(R_n)$ is the subnetwork of $R_n$ with reactions in $E^{1,2}_n$. Thus we have
\[
\P(\delta_{R_n}=0) \leq  \P(\delta_{\pi_{E^{1,2}_n}(R_n)} =0).
\]
Again, we make use of the upper bound in Lemma \ref{lemma:Cbound}. $\delta_{\pi_{E^{1,2}_n}(R_n)} =0$ must imply the number of non-isolated vertices in $\pi_{E^{1,2}_n}(R_n)$ is bounded by $2n$. Let $I$ be the set of isolated binary vertices in $\pi_{E^{1,2}_n}(R_n)$. Since there are $\frac{n(n+1)}{2}$ binary vertices, we must then have
\[
|I| > \frac{n(n+1)}{2} - 2n,
\]
and as a result
\[
\P(\delta_{R_n}=0)  \leq \P\bigg(|I| > \frac{n(n+1)}{2} - 2n\bigg).
\]
The probability that a binary vertex is isolated in $\pi_{E^{1,2}_n}(R_n)$ is $(1-p^{1,2}_n)^n$, because there are precisely $n$ unary vertices.  Thus, summing over the binary vertices yields
\[
\E|I| = \frac{n(n+1)}{2}(1-p^{1,2}_n)^n.
\]
We can also derive $\var(|I|)$   since $|I|$ is binomially distributed. Using $\E|I|$ and $\var(|I|)$, a rigorous proof for
\[\lim_{n\to\infty}\P\bigg(|I| > \frac{n(n+1)}{2} - 2n\bigg) =0\]
can be carried out by precisely the same argument as Lemma 4.1 in \cite{prevalence}. We omit it for the sake of brevity. 

\item Finally, we assume $K_{2,2}(n) \gg n$.  We observe that based on Corollary \ref{cor:subnetwork}, $\delta_{R_n}=0$ must imply $\delta_{\pi_{E^{2,2}_n}(R_n)} =0$, where $\pi_{E^{2,2}_n}(R_n)$ is the subnetwork of $R_n$ with reactions in $E^{2,2}_n$. Thus we have
\[
\P(\delta_{R_n}=0) \leq  \P(\delta_{\pi_{E^{2,2}_n}(R_n)} =0).
\]
Note that $K_{2,2}(n) \gg n$ implies $n^4p_n \gg n$, and thus $p_n\gg \frac{1}{n^3}$. The remainder of the proof follows along the same lines as the proof of Lemma 4.1 and Theorem 4.1 in \cite{prevalence}. \qedhere 
\end{enumerate}
\end{proof} 

The following proposition will be useful in the proof that Condition C1.2 implies $\lim_{n\to \infty}\P(\delta_{R_n}=0)=0$.

\begin{proposition}\label{prop:unary+2}
Let $R=\{\S,\C,\Re\}$ be a reaction network with  $\S=\{S_1,S_2,\dots,S_n\}$. Assume that all vertices in $R$ are unary, and $R$ has only one connected component. Let $i,j,p,q\in \{1,\dots,n\}$ be such that $\{i,j\} \ne \{p,q\}$, and let $\widehat{\Re}=R\cup\{\emptyset\leftrightharpoons S_i+S_j,\emptyset\leftrightharpoons S_p+S_q\}$  and  $\widehat{R}$ be the reaction network associated with $\widehat\Re$. Then  $\delta_{\widehat{R}}=1$.
\end{proposition}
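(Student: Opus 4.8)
The plan is to compute $\delta_{\widehat G} = |\widehat\C| - \widehat\ell - \widehat s$ directly, tracking how each of the three quantities changes when the two reversible reactions $\emptyset \leftrightharpoons S_i+S_j$ and $\emptyset \leftrightharpoons S_p+S_q$ are appended to $G$. First I would pin down the structure of $G$ itself. Since every complex is unary and, by the convention of Remark \ref{remark:reactionsspecify}, every species $S_1,\dots,S_n$ occurs in some complex, the complex set of $G$ is exactly $\{S_1,\dots,S_n\}$, so $|\C| = n$. Writing $e_k \in \R^n$ for the vector of the unary complex $S_k$ and $\sigma(x) = \sum_k x_k$ for the coordinate-sum functional, I note that the single linkage class makes the associated graph on these $n$ vertices connected, so its reaction vectors $e_b - e_a$ span the full sum-zero hyperplane $\ker\sigma$; hence $s = n-1$ and $S \subseteq \ker\sigma$. (This also recovers $\delta_G = 0$, consistent with Lemma \ref{lemma:unary}.)

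Next I would count the complexes and linkage classes of $\widehat G$. The three complexes $\emptyset$, $S_i+S_j$, $S_p+S_q$ are of zeroth and second order, so none is unary and none already lies in $\C$; the hypothesis $\{i,j\}\ne\{p,q\}$ guarantees $S_i+S_j \ne S_p+S_q$, so these are three genuinely new complexes and $|\widehat\C| = n+3$. Both new reactions meet at $\emptyset$, so $\{\emptyset, S_i+S_j, S_p+S_q\}$ forms one new linkage class, disjoint from the unary complexes of $G$; thus $\widehat\ell = 2$.

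The crux, and the step I expect to require the most care, is the stoichiometric subspace $\widehat S = \mathrm{span}\bigl(S \cup \{e_i+e_j,\, e_p+e_q\}\bigr)$. The vector $e_i+e_j$ satisfies $\sigma(e_i+e_j) = 2 \ne 0$, so it lies outside the hyperplane $S$; adjoining it raises the dimension by exactly one, and since $S$ already has dimension $n-1$ we obtain $\mathrm{span}(S \cup \{e_i+e_j\}) = \R^n$. Consequently $e_p+e_q \in \R^n$ contributes nothing further, so $\widehat s = n$. (Equivalently, $(e_p+e_q) - (e_i+e_j)$ is sum-zero and supported on $\{i,j,p,q\}\subseteq\{1,\dots,n\}$, hence already lies in $S=\ker\sigma$.) Combining the three counts gives $\delta_{\widehat G} = (n+3) - 2 - n = 1$.

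Finally I would record where the hypothesis $\{i,j\}\ne\{p,q\}$ is genuinely used: it enters the complex count, not the dimension computation — without it the two reactions collapse into one, yielding $|\widehat\C| = n+2$ and therefore $\delta_{\widehat G}=0$. The main obstacle is thus not a hard estimate but the bookkeeping of the dimension jump, namely confirming that the second binary reaction vector is redundant because a single sum-$2$ vector already completes the sum-zero hyperplane to all of $\R^n$.
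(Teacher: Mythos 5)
Your proof is correct and follows essentially the same route as the paper: count the three new complexes, the one new linkage class, and show the stoichiometric dimension rises by exactly one, giving $(n+3)-2-n=1$. Your identification of $S$ with the sum-zero hyperplane $\ker\sigma$ makes the "dimension increases by exactly one" step more explicit than the paper's brief assertion, but the argument is the same in substance.
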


Note that in the above proposition we are allowing $i = j$ and/or $i = p$. 
\begin{proof}
Due to Lemma \ref{lemma:unary}, the deficiency of $R$ is necessarily zero (since it contains only unary vertices).  
Starting from $R$, adding the pair of reversible reactions $\emptyset\leftrightharpoons S_i+S_j,\emptyset\leftrightharpoons S_p+S_q$ to form $\widehat{R}$ increases the number of vertices by three, and increases the number of connected components by 1. It is straightforward to check that since  the vertices $\{S_i,S_j,S_p,S_q\}$  are contained within $\C$, the addition of the reaction vectors for $\emptyset \leftrightharpoons S_i+S_j$ and $\emptyset \leftrightharpoons S_p+S_q$ only increases the size of the dimension of the stoichiometric subspace by 1.  Hence, we  have $\delta_{\widehat{R}} = \delta_{R} + 3 - 1 - 1 = 1$. 
\end{proof}

We now show that Condition (C1.2) yields the desired result.
\begin{lemma}
If $K_{1,1}(n)\gg n$ and either $K_{0,2}(n) \gg 1$, $K_{1,2}(n) \gg 1$, or $K_{2,2}(n)\gg 1$, then we have 
\[
\lim_{n\to \infty}\P(\delta_{R_n}=0)=0.
\]
\end{lemma}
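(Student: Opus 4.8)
The plan is to exhibit, with probability tending to one, a sub-network of $G_n$ having deficiency exactly one; by Corollary~\ref{cor:subnetwork} this forces $\delta_{G_n}\ge 1$, so $\P(\delta_{G_n}=0)\to 0$. The deficiency-one witness will consist of two ingredients living in disjoint (hence independent) edge classes: a single linkage class built entirely from $E^{1,1}_n$ reactions that spans a linear number of species, together with one or two extra reactions drawn from $E^{0,2}_n$, $E^{1,2}_n$, or $E^{2,2}_n$ whose species all lie inside that linkage class. The hypothesis $K_{1,1}(n)\gg n$ supplies the first ingredient, and the hypothesis that one of $K_{0,2}(n),K_{1,2}(n),K_{2,2}(n)$ is $\gg 1$ supplies the second.

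First I would analyze the unary part. The reactions in $E^{1,1}_n$ turn the $n$ unary vertices into an Erd\H os--R\'enyi graph $G(n,p_n^{1,1})$ whose expected degree is $\sim K_{1,1}(n)/n \to\infty$. By standard random-graph theory a graph with average degree tending to infinity has, with high probability, a connected component containing $(1-o(1))n$ vertices; in particular there is a fixed constant $c>0$ so that with probability tending to one some $E^{1,1}_n$-component has species set $T$ with $|T|\ge cn$. Restricting $G_n$ to the $E^{1,1}_n$ reactions inside this component yields a reaction network $G$ all of whose complexes are unary and which has a single linkage class; by Lemma~\ref{lemma:unary} it has deficiency zero, and a short count gives $|\C|=|T|$ and $\ell=1$, so its stoichiometric subspace is exactly the hyperplane $\{x\in\R^{T}:\sum_{i\in T}x_i=0\}$ of dimension $|T|-1$.

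Next I would condition on the component $T$ and, using the independence of the $E^{1,1}_n$ edges from the $E^{0,2}_n$, $E^{1,2}_n$, $E^{2,2}_n$ edges, produce the extra reactions whose species lie in $T$. Because $|T|\ge cn$, a positive fraction of the reactions in each class has all of its species in $T$, so the conditional expected number of realized such reactions is at least a fixed positive multiple of $K_{i,j}(n)$ and hence tends to infinity; a first/second-moment argument then shows the needed reactions appear with probability tending to one (two from $E^{0,2}_n$, two from $E^{1,2}_n$, or a single one from $E^{2,2}_n$, with distinct newly created binary complexes, the coincidental cases being of lower order). Adjoining these reactions to $G$ gives a sub-network of $G_n$ whose deficiency I would compute directly. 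For two $E^{0,2}_n$ reactions this is exactly Proposition~\ref{prop:unary+2}, giving deficiency one. For two $E^{1,2}_n$ reactions $S_a\leftrightharpoons S_b+S_c$ and $S_d\leftrightharpoons S_e+S_f$, the two reaction vectors have coordinate-sum $1$, so they raise the stoichiometric dimension by only $1$ (their difference lies in the stoichiometric subspace of $G$) while adding two complexes and no new linkage class, giving $\delta=(|T|+2)-1-|T|=1$. For a single $E^{2,2}_n$ reaction $S_a+S_b\leftrightharpoons S_c+S_d$ the reaction vector has coordinate-sum $0$ and hence already lies in the stoichiometric subspace of $G$, so it adds two complexes and one new linkage class without raising $s$, giving $\delta=(|T|+2)-2-(|T|-1)=1$.

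The main obstacle I anticipate is not any single deficiency computation—those are short linear-algebra checks in the spirit of Proposition~\ref{prop:unary+2}—but making the two-stage probabilistic argument airtight: establishing the linear-size $E^{1,1}_n$ component from $K_{1,1}(n)\gg n$ with the correct quantitative control, and then justifying the conditional existence of the extra reactions given that $T$ is itself random. Independence of the edge classes keeps the conditioning clean, but care is needed; the cleanest route is to fix an arbitrary realization of the $E^{1,1}_n$ edges for which $|T|\ge cn$, bound the conditional failure probability for the extra reactions uniformly over all such realizations, and then combine this with $\P(|T|\ge cn)\to 1$.
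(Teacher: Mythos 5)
Your proposal is correct and follows essentially the same route as the paper: a giant component in the $E^{1,1}_n$ graph (coming from $K_{1,1}(n)\gg n$) covering almost all species, one or two extra reactions from $E^{0,2}_n$, $E^{1,2}_n$, or $E^{2,2}_n$ landing inside it, an explicit deficiency-one computation (Proposition \ref{prop:unary+2} for the $E^{0,2}_n$ case), and Corollary \ref{cor:subnetwork} to conclude. The differences are only in bookkeeping: the paper writes out only the $E^{0,2}_n$ case and organizes the estimate by conditioning on the component size $|B_n|=k$ and applying Jensen's inequality to two uniformly chosen realized reactions, rather than fixing a linear-size component and running a moment argument for reactions inside it.
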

\begin{proof}
Suppose $K_{0,2}(n) \gg 1$. The other two cases can be handled in a same manner. 

$M_{0,2}(n)$ is binomially distributed with mean $K_{0,2}(n) \gg 1$. Thus, standard methods show
\[
\lim_{n\to\infty}\P(M_{0,2}(n) \geq 2)=1.
\]
Now it suffices to show 
\[
\lim_{n\to \infty}\P(\delta_{R_n}=0, M_{0,2}(n) \geq 2)=0.
\]
Let $G_n^{1,1}$ be the subgraph of $G_n$ consisting of all vertices $S_i$ (even those that are isolated)  and all edges in $E^{1,1}_n$ that are realized in $G_n$. Let $B_n$ be the largest component in $G_n^{1,1}$ and let $|B_n|$ be its size (number of vertices). When $M_{0,2}(n) \ge 2$, we let $B_n^+$ be the union of $B_n$ with two edges chosen uniformly at random from $E^{0,2}_n$ that are realized in $G_n$.  If  $M_{0,2}(n) \le 1$ we choose the two reactions uniformly at random from $E^{0,2}_n$.  We denote the chosen two edges by $\emptyset \leftrightharpoons S_i+S_j, \emptyset\leftrightharpoons S_p+S_q$ and note that $\{i,j\} \ne \{p,q\}$. Note that by symmetry the distribution  of the pair $(\emptyset \leftrightharpoons S_i+S_j, \emptyset\leftrightharpoons S_p+S_q)$ is the same as if we simply chose two reactions from $E^{0,2}_n$ uniformly at random.   Since $\delta_{B_n^+} \le \delta_{R_n}$, we must have
\[
\P(\delta_{R_n}=0,M_{0,2}(n)\geq 2) \le  \P(\delta_{B_n^+}=0,M_{0,2}(n)\geq 2) \le \P(\delta_{B_n^+}=0).
\]
Thus it suffices to show 
\[
\lim_{n\to \infty}\P(\delta_{B_n^+}=0)=0.
\]
Conditioning on the size of $B_n$, the largest component of $G^{1,1}_n$, yields
\begin{equation}\label{eq2398402340}
\P(\delta_{B_n^+}=0) = \sum_{k = 1}^n \P(\delta_{B_n^+}=0| |B_n|=k)\P(|B_n| = k).
\end{equation}

\noindent From Proposition \ref{prop:unary+2}, we know that if $B_n^+$ has a deficiency of zero, then not all of $S_i,S_j,S_p,S_q$ are contained in $B_n$.  Thus we have
\begin{align}\label{eq23jio234jko23}
\begin{split}
    \P(\delta_{B_n^+}=0| |B_n|=k) &\le \P(\text{not all of $S_i,S_j,S_p,S_q$ are contained in $B_n$} ||B_n|=k)\\
    &=1 - \P(S_i,S_j,S_p,S_q\in B_n \large| |B_n|=k).
    \end{split}
\end{align}
We will compute the probability as follows
\begin{align}\label{eq:87870}
    \P(S_i,S_j,S_p,S_q\in B_n \large| |B_n|=k) &= \P(S_p,S_q\in B_n \large| S_i,S_j\in B_n, |B_n|=k)\P(S_i,S_j\in B_n | |B_n| = k).
\end{align}
We first consider the probability $\P(S_i,S_j\in B_n | |B_n| = k)$.  Since $|B_n| = k$, there are exactly $\binom{k}{2}$ ways of choosing a reaction of the form $\emptyset \leftrightharpoons S_i+S_j$ with $i \ne j$ and $S_i,S_j \in B_n$. Similarly, for the case $i =j$, there are exactly $k$ ways of choosing a reaction of the form $\emptyset \leftrightharpoons 2S_i$ with $S_i \in B_n$.  Since there are a total of $\binom{n}{2} + n$ elements in $E_{n}^{0,2}$ we have 
\begin{equation}\label{eq2k34k2jl234kjl}
    \P(S_i,S_j\in B_n | |B_n| = k) = \frac{{k\choose 2}+k}{{n\choose 2}+n}=\frac{k(k+1)}{n(n+1)} \geq \bigg(\frac{k}{n}\bigg)^2,
\end{equation}
where the inequality holds since $k \le n$. Similarly, we have
\begin{equation}\label{eq3458912k3l4j}
\P(S_p,S_q\in B_n \large| S_i,S_j\in B_n, |B_n|=k) \frac{{k\choose 2}+k-1}{{n\choose 2}+n-1} \geq \bigg(\frac{k}{n}\bigg)^2,
\end{equation}
where the inequality holds for $k\geq 4$, which can be verified in a straightforward manner. From \eqref{eq23jio234jko23}, \eqref{eq:87870}, \eqref{eq2k34k2jl234kjl}, and \eqref{eq3458912k3l4j} we have that for $k\geq 4$
\begin{align}\label{eqjk23lk49898234}
\P&(\delta_{B_n^+}=0||B_n|=k) \leq 1-\bigg(\frac{k}{n}\bigg)^4.
\end{align}

\noindent Finally, combining \eqref{eq2398402340} and \eqref{eqjk23lk49898234}, we have
\begin{align}\label{eq234jkio234}
\lim_{n\to\infty}\P(\delta_{B_n^+}=0)&\leq \sum_{k=4}^n  \bigg(1-\bigg(\frac{k}{n}\bigg)^4\bigg)\P(|B_n|=k) + \sum_{k=1}^3\P(|B_n|=k)\nonumber \\
&=\E\bigg(1-\bigg(\frac{|B_n|}{n}\bigg)^4\bigg) + \sum_{k=1}^3\bigg(\frac{k}{n}\bigg)^4\P(|B_n|=k)\nonumber \\ 
&\leq 1- \bigg(\E\bigg(\frac{|B_n|}{n}\bigg)\bigg)^4 + \frac{98}{n^4},
\end{align}
where the last inequality is due to Jensen's inequality.
Since $K_{1,1}(n) \gg n$, we have the edge probability for the edges in $E^{1,1}_n$ satisfy
\[
p_n^{1,1} \sim \frac{K_{1,1}(n)}{n^2} \gg \frac{n}{n^2}=\frac{1}{n}.
\]
From, \cite{ER:giantcomponent}, $\frac{|B_n|}{n}-f(c_n)\overset{\P}{\to} 0$, where $f(c_n)=1-\frac{1}{c_n}\sum_{k=1}^\infty \frac{k^{k-1}}{k!}(c_ne^{-c_n})^k$ and $c_n=np^{1,1}_n$. Since $np_n^{1,1}\gg 1$, it is straightforward to verify that $\lim_{n\to\infty}f(c_n)=1$. Since both $\frac{|B_n|}{n}$ and $f(c_n)$ are bounded by $1$, we have
\[
\lim_{n\to\infty}\E\bigg(\frac{|B_n|}{n}\bigg) = \lim_{n\to\infty} f(c_n)=1,
\]
which completes the proof.
\end{proof}

Finally, we have the proof related to Condition C1.3.
\begin{lemma}
If either  $K_{0,1}(n)^2K_{0,2}(n) \gg n^2$, $K_{0,1}(n)^3K_{1,2}(n) \gg n^3$ or $K_{0,1}(n)^4K_{2,2}(n) \gg n^4$, then we have
\[
\lim_{n\to \infty}\P(\delta_{R_n}=0)=0.
\]
\end{lemma}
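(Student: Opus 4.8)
The plan is to prove each of the three implications separately, all by the same second-moment (Chebyshev) argument: in each case I exhibit a family of small ``bad'' subnetworks, every one of which has deficiency exactly $1$, and I show that under the stated condition at least one bad subnetwork is realized in $G_n$ with probability tending to $1$. By Corollary \ref{cor:subnetwork}, the presence of any positive-deficiency subnetwork forces $\delta_{G_n}>0$, so this immediately gives $\P(\delta_{G_n}=0)\to 0$.

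The bad configurations are chosen so that an inflow/outflow reaction set from $E^{0,1}_n$ makes a single higher-order reaction linearly redundant. For the first condition I take, for each unordered pair $\{p,q\}$ with $p\neq q$, the three reactions $\emptyset\leftrightharpoons S_p$, $\emptyset\leftrightharpoons S_q$, $\emptyset\leftrightharpoons S_p+S_q$; the associated subnetwork has $|\C|=4$, $\ell=1$, $s=2$, hence deficiency $4-1-2=1$. For the second condition I take, for distinct $a,p,q$, the reactions $\emptyset\leftrightharpoons S_a$, $\emptyset\leftrightharpoons S_p$, $\emptyset\leftrightharpoons S_q$, $S_a\leftrightharpoons S_p+S_q$, giving $5-1-3=1$. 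For the third I take, for distinct $a,b,p,q$, the reactions $\emptyset\leftrightharpoons S_a$, $\emptyset\leftrightharpoons S_b$, $\emptyset\leftrightharpoons S_p$, $\emptyset\leftrightharpoons S_q$, $S_a+S_b\leftrightharpoons S_p+S_q$, giving $7-2-4=1$. In each case restricting the indices to be distinct costs only a constant factor in the counts below, and the redundancy of the single higher-order reaction vector follows because its reaction vector lies in the span of the realized inflow vectors $e_i$.

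Let $X$ be the number of realized bad configurations. Since the inflow edges lie in $E^{0,1}_n$ while the remaining edge lies in a disjoint edge class, the indicator of each configuration is a product of independent edge indicators, so $\E X$ is the number of configurations times the product of the corresponding edge probabilities. Using $p^{0,1}_n\sim K_{0,1}(n)/n$ and $p^{i,j}_n\sim K_{i,j}(n)/n^{i+j}$, one finds $\E X\sim c\,K_{0,1}(n)^2K_{0,2}(n)/n^2$ in the first case, and $c\,K_{0,1}(n)^3K_{1,2}(n)/n^3$ and $c\,K_{0,1}(n)^4K_{2,2}(n)/n^4$ in the other two; thus each stated hypothesis is precisely what forces $\E X\to\infty$. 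Each hypothesis also forces $K_{0,1}(n)\to\infty$, since $K_{0,2}(n), K_{1,2}(n), K_{2,2}(n)$ are bounded above by $O(n^2), O(n^3), O(n^4)$ respectively.

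The main obstacle is the variance bound $\var(X)=o((\E X)^2)$. The key structural fact is that the higher-order edge is determined by, and private to, each configuration, so two configurations with disjoint index sets share no edges and are independent; all covariance therefore comes from pairs of configurations sharing species indices, i.e.\ sharing inflow edges. If a configuration uses $t$ indices (so $t=2,3,4$ in the three cases), there are $\sim n^t$ configurations and $\sim n^{2t-m}$ ordered pairs sharing exactly $m\ge 1$ indices, each contributing covariance at most $(p^{0,1}_n)^{2t-m}(p^{\mathrm{high}}_n)^2$. Comparing with $(\E X)^2\sim n^{2t}(p^{0,1}_n)^{2t}(p^{\mathrm{high}}_n)^2$, the group of overlap-$m$ pairs contributes a fraction of order $(n\,p^{0,1}_n)^{-m}\sim K_{0,1}(n)^{-m}$, while the diagonal contributes at most $\E X=o((\E X)^2)$. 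Since $K_{0,1}(n)\to\infty$, every such ratio vanishes, giving $\var(X)=o((\E X)^2)$. Chebyshev's inequality then yields $\P(X=0)\to 0$, and hence $\P(\delta_{G_n}=0)\le \P(X=0)\to 0$, completing the proof.
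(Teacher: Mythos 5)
Your proof is correct, and it reaches the conclusion by a genuinely different probabilistic route than the paper's. The combinatorial core is shared: both arguments exhibit a family of deficiency-one subnetworks built from inflow/outflow reactions plus a single higher-order reaction (your three configurations and their deficiency computations $4-1-2$, $5-1-3$, $7-2-4$ all check out), and both then show that at least one such configuration is realized with probability tending to one, so that Corollary \ref{cor:subnetwork} forces $\delta_{G_n}>0$. Where you diverge is in the probabilistic engine. You run the standard second-moment method on the count $X$ of realized configurations, deducing $K_{0,1}(n)\to\infty$ from the hypothesis together with $K_{0,2}(n)=O(n^2)$ (and analogously in the other cases), and using that to make each overlap-$m$ covariance class contribute only a fraction $K_{0,1}(n)^{-m}$ of $(\E X)^2$; the one structural fact this relies on, that distinct configurations carry distinct higher-order edges and hence disjoint configurations are independent, you correctly identify. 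The paper instead conditions on the realized edge counts $M_{0,1}(n)=i$ and $M_{0,2}(n)=j$, bounds the conditional probability of avoiding every bad triple by $\bigl(1-\tfrac{i(i+1)}{n(n+1)}\bigr)^{j}$ via a sequential argument, and removes the conditioning with the chain $1-x\le e^{-x}\le (1+x)^{-1}$ and the binomial reciprocal-moment bounds of Lemma \ref{lemma:binomial}, thereby avoiding any variance computation and producing the explicit quantitative bound $\P(\delta_{G_n}=0)\le 8n^2/\bigl(K_{0,1}(n)^2K_{0,2}(n)\bigr)$. Your route is more standard and arguably more transparent; the paper's buys an explicit convergence rate and dispenses with the covariance bookkeeping.
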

\begin{proof}
It suffices to show $K_{0,1}(n)^2K_{0,2}(n) \gg n^2$ implies $\lim_{n\to \infty}\P(\delta_{R_n}=0)=0.$ The other two cases follow the same argument. Recall that $M_{0,1}(n)$ has a binomial distribution with $|E^{0,1}_n|=n$ trials and mean $\E M_{0,1}(n)=K_{0,1}(n)$, and $M_{0,2}(n)$ is a binomial distribution with $|E^{0,2}_n|=n(n+1)/2$ trials and mean $\E M_{0,2}(n)=K_{0,2}(n)$. Thus we have 
\begin{equation}\label{eq12389751}
 \P(\delta_{R_n}=0) = \sum_{\substack{i\leq n\\ j\leq n(n+1)/2}}\P(\delta_{R_n}=0|M_{0,1}(n)=i,M_{0,2}(n)=j)\P(M_{0,1}(n)=i, M_{0,2}(n)=j).
\end{equation}
Consider the event $\delta_{R_n}=0$ conditioned on $M_{0,1}(n)=i, M_{0,2}(n)=j$. Note that a reaction network of the form $\emptyset \leftrightharpoons S_p, \quad \emptyset\leftrightharpoons S_q, \quad \emptyset \leftrightharpoons S_p+S_q$ has positive deficiency, so any network containing it also has positive deficiency according to Corollary \ref{cor:subnetwork}. Thus $\delta_{R_n}=0$ implies there is no such subnetwork in $R_n$. 

There are $\frac{n(n+1)}{2}$ reactions in $E^{0,2}_n$, thus the probability that there is reaction of the form  $\emptyset\leftrightharpoons S_p+S_q$ (note that $p$ and $q$ can be the same) where $\emptyset \leftrightharpoons S_p$ and $\emptyset \leftrightharpoons S_q$ are already present is
\[
\frac{{i\choose 2}+i}{\frac{n(n+1)}{2}}=\frac{i(i+1)}{n(n+1)}.
\]
We may then us a sequential argument (on the $j$ elements from $E^{0,2}_n$ that have been realized) that is similar to the one used around \eqref{eq3458912k3l4j} to conclude
\begin{equation}\label{eq12343213}
\P(\delta_{R_n}=0|M_{0,1}(n)=i,M_{0,2}(n)=j) \leq \bigg(1-\frac{i(i+1)}{n(n+1)}\bigg)^j.
\end{equation}
Combining \eqref{eq12389751} and \eqref{eq12343213}, we have
\begin{align*}
 \P(\delta_{R_n}=0) & \leq \sum_{\substack{i\leq n \\ j\leq n(n+1)/2}} \bigg(1-\frac{i(i+1)}{n(n+1)}\bigg)^j\P(M_{0,1}(n)=i, M_{0,2}(n)=j)\\
 &=\E \bigg[ \bigg(1-\frac{M_{0,1}(n)(M_{0,1}(n)+1)}{n(n+1)}\bigg)^{M_{0,2}(n)}\bigg].
\end{align*}
We have $\frac{M_{0,1}(n)(M_{0,1}(n)+1)}{n(n+1)} \geq \frac{M_{0,1}(n)^2}{2n^2}$, thus
\begin{align}\label{eq9017234}
 \P(\delta_{R_n}=0) & \leq  \E \bigg[ \bigg(1-\frac{M_{0,1}(n)^2}{2n^2}\bigg)^{M_{0,2}(n)}\bigg] \leq \E \bigg[ e^{-\frac{M_{0,1}(n)^2M_{0,2}(n)}{2n^2}}\bigg],
\end{align}
where the second inequality follows the fact that $1-x\leq e^{-x}$. Notice further that $e^{-x} \leq \frac{1}{x+1}$ for $x\geq 0$, hence we have
\begin{align}\label{eqoiuj90123}
 \E \bigg[ e^{-\frac{M_{0,1}(n)^2M_{0,2}(n)}{2n^2}}\bigg] \leq \E \bigg[\frac{2n^2}{{M_{0,1}(n)^2M_{0,2}(n)}+2n^2} \bigg]=2n^2\E\bigg[\frac{1}{M_{0,1}(n)^2M_{0,2}(n)+2n^2} \bigg].
\end{align}
Since $M_{0,1}(n)\leq n$ and $M_{0,2}(n)\leq \frac{n(n+1)}{2}$, we have for $n$ large enough 
\begin{align}
M_{0,1}(n)^2M_{0,2}(n)+2n^2 &\geq (M_{0,1}(n)^2+1)(M_{0,2}(n)+1) \nonumber \\
&\geq \frac{1}{2}(M_{0,1}(n)+1)^2(M_{0,2}(n)+1) \nonumber\\
&\geq \frac{1}{4}(M_{0,1}(n)+1)(M_{0,1}(n)+2)(M_{0,2}(n)+1), \label{eq1235134123}
\end{align}
where the first inequality can be verified by expanding the right hand side and utilizing the inequalities on $M_{0,1}(n)$ and $M_{0,2}(n)$, the second inequality follows by the well known $\frac12(a+b)^2 \le a^2  +b^2$  inequality, and the last inequality comes from $M_{0,1}(n)+1\geq \frac{1}{2}(M_{0,1}(n)+2)$, which is true as long as $M_{0,1}(n)\geq 0$.

Combining \eqref{eq9017234}, \eqref{eqoiuj90123}, \eqref{eq1235134123}, and noticing that $M_{0,1}(n)$ and $M_{0,2}(n)$ are independent, we have
\begin{align}\label{eqnjk12j3ou09}
\P(\delta_{R_n}=0) & \leq  8n^2 \E\bigg[\frac{1}{(M_{0,1}(n)+1)(M_{0,1}(n)+2)}\bigg]\E\bigg[\frac{1}{M_{0,2}(n)+1}\bigg].
\end{align}
Since $M_{0,1}(n)\sim B(n,K_{0,1}(n)/n)$, from Lemma \ref{lemma:binomial}, we have 
\begin{align}\label{eq19023jk}
    \E\bigg[&\frac{1}{(M_{0,1}(n)+1)(M_{0,1}(n)+2)}\bigg] \leq \frac{1}{K_{0,1}(n)^2}.
\end{align}
We also have $M_{0,2}(n)\sim B(n(n+1)/2,\frac{K_{0,2}(n)}{n(n+1)/2})$. Repeating the same argument as above, we have
\begin{align}\label{eqnj1238990123}
\E\bigg[\frac{1}{M_{0,2}(n)+1}\bigg] \leq \frac{1}{K_{0,2}(n)}.
\end{align}
Thus from \eqref{eqnjk12j3ou09}, \eqref{eq19023jk}, \eqref{eqnj1238990123} we have
\[
\P(\delta_{R_n}=0) \leq \frac{8n^2}{K_{0,1}(n)^2K_{0,2}(n)}.
\]
Since $K_{0,1}(n)^2K_{0,2}(n) \gg n^2$ the proof is complete.
\end{proof}
\subsection{Conditions on $K_{i,j}(n)$ for  $\lim_{n\to \infty}\P(\delta_{R_n}=0)=1$}

Note that the conditions below are essentially the converse of Theorem \ref{main1}.

\begin{theorem}\label{main2}
If all of the following conditions hold, then $\lim_{n\to \infty}\P(\delta_{R_n}=0)=1$.
\begin{enumerate}[leftmargin]
    \item[(C2.1)] $K_{0,2}(n) \ll n$, $K_{1,2}(n) \ll n$, and $K_{2,2}(n) \ll n$.
    \item[(C2.2)] One of the following conditions holds 
    \begin{itemize}
        \item[(C2.2.1)] $K_{1,1}(n)\ll n$ 
        \item[(C2.2.2)] $K_{0,2}(n) \ll 1$, $K_{1,2}(n) \ll 1$, and $K_{2,2}(n) \ll 1$.
    \end{itemize}
    \item[(C2.3)] $K_{0,1}(n)^2K_{0,2}(n) \ll n^2$, $K_{0,1}(n)^3K_{1,2}(n) \ll n^3$, and $K_{0,1}(n)^4K_{2,2}(n) \ll n^4$.
\end{enumerate}
\end{theorem}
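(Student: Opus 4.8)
The plan is to show $\P(\delta_{G_n}=0)\to 1$, i.e.\ $\P(\delta_{G_n}\ge 1)\to 0$. Since (C2.2) is a disjunction, I would first dispose of the case (C2.2.2): if $K_{0,2}(n),K_{1,2}(n),K_{2,2}(n)\ll 1$, a first--moment (union) bound gives $\P(M_{0,2}(n)+M_{1,2}(n)+M_{2,2}(n)\ge 1)\le K_{0,2}(n)+K_{1,2}(n)+K_{2,2}(n)\to 0$, so with high probability $G_n$ contains no reaction involving a binary complex; then every complex is unary or of zeroth order and Lemma~\ref{lemma:unary} forces $\delta_{G_n}=0$. Hence I may assume (C2.2.1), i.e.\ $K_{1,1}(n)\ll n$, holds together with (C2.1) and (C2.3); this is the substantive case.

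Second, I would set up the structure. Split the realized reactions into a \emph{core} consisting of the realized edges of $E^{0,1}_n\cup E^{1,1}_n$, and the \emph{binary} reactions, the realized edges of $E^{0,2}_n\cup E^{1,2}_n\cup E^{2,2}_n$. By Lemma~\ref{lemma:unary} the core sub-network has deficiency zero. The hypothesis $K_{1,1}(n)\ll n$ makes the $E^{1,1}_n$ graph deeply subcritical ($np^{1,1}_n\to 0$): the expected number of cycles tends to $0$, so with high probability it is a forest, and its component sizes have exponentially decaying tails, whence $\E\sum_{L}|L|^{k}=O(n)$ for each fixed $k$, with $L$ ranging over components. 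The component $D$ of $\emptyset$ in the core has size $O(K_{0,1}(n))$ with high probability, so a fixed species lies in $D$ with probability $O(K_{0,1}(n)/n)$. Now build $G_n$ by adding the binary reactions to the core one at a time; the case analysis in the proof of Lemma~\ref{lemma:add_reaction} shows that each addition leaves the deficiency unchanged unless the added reaction is \emph{redundant}, meaning its reaction vector already lies in the stoichiometric span of the reactions present while it joins complexes not yet in a common linkage class. Consequently $\delta_{G_n}$ equals the number of redundant binary additions, and by Corollary~\ref{cor:subnetwork} it suffices to prove that the expected number of such redundancies tends to $0$.

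Third, I would carry out the first--moment estimates, organised by a grading. The core stoichiometric span consists exactly of the vectors that vanish on species untouched by the core, are arbitrary on $D$, and sum to zero on each pure--unary component $L$. A binary reaction vector ($e_i+e_j$ for $E^{0,2}_n$, $e_i+e_j-e_\ell$ for $E^{1,2}_n$, $e_c+e_d-e_a-e_b$ for $E^{2,2}_n$) is therefore redundant only when its species fall into finitely many patterns relative to $D$ and the pure--unary components. The patterns placing species in $D$ are driven by inflow reactions, and their expected counts are precisely $K_{0,1}(n)^2K_{0,2}(n)/n^2$, $K_{0,1}(n)^3K_{1,2}(n)/n^3$, $K_{0,1}(n)^4K_{2,2}(n)/n^4$, which vanish by (C2.3); mixed patterns, with some species in $D$ and others in a common small component, give strictly smaller contributions that vanish using $K_{0,1}(n)\le n$ together with (C2.1). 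The patterns using only pure--unary components are of two kinds: a single $E^{2,2}_n$ reaction whose sources and products match across common components, with expected count $O(K_{2,2}(n)/n^2)$; and a pair of $E^{0,2}_n$ reactions landing in one common component, which is exactly the configuration of Proposition~\ref{prop:unary+2} and has expected count $O\!\big(K_{0,2}(n)^2\,\E\!\sum_L|L|^4/n^4\big)=O(K_{0,2}(n)^2/n^3)$, both of which vanish by (C2.1) and the component--moment bound coming from (C2.2.1). Witnesses involving two or more binary reactions among themselves carry extra factors of the small probabilities $p^{i,j}_n$ and contribute only lower--order terms, again absorbed by (C2.1). Summing the finitely many families, the expected number of redundant binary additions tends to $0$, and Markov's inequality yields $\P(\delta_{G_n}\ge 1)\to 0$.

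The main obstacle will be the exhaustive classification of redundancy patterns and the verification that each is dominated by one of (C2.1), (C2.2.1), (C2.3). The delicate point is the coupling between the linear--algebraic redundancy condition and the component structure of the random $E^{1,1}_n$ graph: one must control the moments $\E\sum_L|L|^{k}$ of the subcritical graph and the size of $D$, and must treat the mixed patterns in which some species lie in the large component $D$ while others lie in small pure--unary components, showing that using $K_{0,1}(n)\le n$ these are always subsumed by (C2.1) and (C2.3). Confirming that multi--binary witnesses contribute only lower order, so that the finitely many leading families genuinely exhaust the positive--deficiency configurations, will also require care.
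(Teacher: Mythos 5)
Your architecture is genuinely different from the paper's. The paper conditions on the edge counts $M_{i,j}(n)$, takes a union bound over all possible minimally dependent sets of reaction vectors indexed by a multi-index $(i_1,\dots,i_5)$, and controls that infinite sum using the buffer functions of Lemma \ref{lemma:buffer}, the distinct-species event of Lemma \ref{lemma:species}, and the structural constraint $i_1\le 2i_3+3i_4+4i_5$ of Lemma \ref{lemma:condition}. You instead describe the span of the ``core'' $E^{0,1}_n\cup E^{1,1}_n$ explicitly through the component structure of the subcritical $E^{1,1}_n$ graph and count redundant additions by a first-moment argument. Your reduction to (C2.2.1), the identification of $\delta_{G_n}$ with the number of redundant additions (which does follow from the case analysis in Lemma \ref{lemma:add_reaction}), the description of the core span, and the leading-order pattern counts are all correct, and they recover the origin of (C2.1) and (C2.3) more transparently than the paper's computation does.

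The gap is the sentence asserting that ``witnesses involving two or more binary reactions among themselves carry extra factors of the small probabilities $p^{i,j}_n$ and contribute only lower-order terms.'' That claim is the technical heart of the theorem, and nothing in your outline supplies a mechanism for it. A redundancy can be witnessed by a minimally dependent set containing arbitrarily many reactions from $E^{0,2}_n\cup E^{1,2}_n\cup E^{2,2}_n$ together with arbitrarily many species placed in $D$, and the union bound over all such witnesses is an infinite sum whose convergence must be established uniformly in the conditioned edge counts. Two things make this delicate. First, (C2.1) only gives $K_{i,j}(n)\ll n$, so the naive per-reaction factor is merely $O(1)$, not $o(1)$; the paper needs the buffer functions $Q_{i,j}(n)$ of Lemma \ref{lemma:buffer} precisely so that each term of the geometric series is $O\bigl((Q(n)/n)^{w/2}\bigr)$ with a genuinely small ratio. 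Second, a witness with $w$ species slots can be wired together in roughly $w^{O(w)}$ ways (each slot choosing a partner slot or a placement in $D$), and beating this combinatorial explosion is exactly what the $\binom{n}{\ell}(\ell/n)^{\cdot}$ bookkeeping, the Stirling bounds, and Corollary \ref{cor:inequality} accomplish in Lemma \ref{lemma:main}; likewise, controlling the number of $D$-placements requires the constraint of Lemma \ref{lemma:condition} (in your language, that a minimally dependent witness can expose at most $2i_3+3i_4+4i_5$ species to $D$), which you never isolate. You flag this step as ``requiring care,'' but as written the proposal omits the argument that actually closes the proof; everything up to the single- and two-reaction patterns is a correct and attractive alternative skeleton, not yet a proof.
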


We will begin by arguing that it is sufficient to prove that a slightly simplified set of conditions implies that $\lim_{n\to \infty} \P(\delta_{R_n} = 0)=1.$
First assume that conditions (C2.1), (C2.2.2), and (C2.3) hold.  
Condition (C2.2.2), combined with the fact that each $M_{i,j}(n)$ has a binomial distribution with mean $K_{i,j}(n)$,  yields
\[
    \lim_{n\to \infty}\P(M_{0,2}(n) = 0)=\lim_{n\to \infty}\P(M_{1,2}(n) = 0)=\lim_{n\to \infty}\P(M_{2,2}(n) = 0)=1.
\]
Hence, with probability approaching 1, the realized network only has edges in $E^{0,1}_n$ and $E^{1,1}_n$, and has a deficiency of zero by Lemma \ref{lemma:unary}.  Hence, the proof in this situation is done, and we can now simply assume that the conditions (C2.1), (C2.2.1), and (C2.3) are satisfied.

However, another slight simplification can take place.  Note that $K_{0,1}(n)\le n$ (since $|E^{0,1}_n| = n$), and if $K_{0,1}(n) \sim n$, then from condition (C2.3), we would have that condition (C2.2.2) is satisfies, which we already know implies the result.  Hence, we only need consider the case $K_{0,1}(n) \ll n$. For the other cases where there exist a subsequence along which $K_{0,1}(n)\sim n$ and another subsequence along which $K_{0,1}(n)\ll n$, we can apply the two corresponding arguments for the two subsequences, both of which when combined will still result in $\lim_{n\to \infty}\P(\delta_{R_n}=0)=1$.
Combining the above shows that Theorem \ref{main2} will be proved by showing that $\lim_{n\to \infty}\P(\delta_{R_n} = 0) = 1$ so long as the following conditions are satisfied:
\begin{enumerate}[leftmargin]
    \item[(C2.1*)] All $K_{i,j}(n) \ll n$.
    \item[(C2.3)\phantom{*}] $K_{0,1}(n)^2K_{0,2}(n) \ll n^2$, $K_{0,1}(n)^3K_{1,2}(n) \ll n^3$, and $K_{0,1}(n)^4K_{2,2}(n) \ll n^4$.
\end{enumerate}

Showing the above is the goal for the remainder of this section. In the first lemma, we construct some ``buffer'' functions, $Q_{i,j}(n)$ that are, asymptotically, between $K_{i,j}(n)$ and $n$, and also satisfy a version of condition (C2.3).  


\begin{lemma}\label{lemma:buffer}
If conditions (C2.1*) and (C2.3) hold, then there exists $Q_{0,1}(n)$, $Q_{0,2}(n)$, $Q_{1,1}(n)$, $Q_{1,2}(n)$, $Q_{2,2}(n)$ such that 
\begin{itemize}
\item $\lim_{n\to\infty}Q_{i,j}(n) >0$ for all $(i,j)$.
\item $K_{i,j}(n) \ll Q_{i,j}(n)\ll n$ for all $(i,j)$.
\item $Q_{0,1}(n)^2Q_{0,2}(n) \ll n^2$, $Q_{0,1}(n)^3Q_{1,2}(n) \ll n^3$, and $Q_{0,1}(n)^4Q_{2,2}(n) \ll n^4$,
\end{itemize}
\end{lemma}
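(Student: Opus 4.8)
The plan is to take each $Q_{i,j}(n)$ to be a common, slowly diverging multiple of $K_{i,j}(n)+1$; shifting by $1$ guarantees the lower bound $\lim_n Q_{i,j}(n)>0$, while using a \emph{single} multiplier lets me control all three product conditions simultaneously. Concretely, I will produce a sequence $t(n)\to\infty$ and set $Q_{i,j}(n)=t(n)\bigl(K_{i,j}(n)+1\bigr)$, then choose $t$ small enough that every required bound holds.

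First I would record the elementary consequences of (C2.1*) and (C2.3) on which the construction rests. Since $K_{i,j}(n)\ll n$ for every $(i,j)$, we have $K_{i,j}(n)+1\ll n$, so $f_{i,j}(n):=n/(K_{i,j}(n)+1)\to\infty$. Next I would expand the three products $(K_{0,1}+1)^2(K_{0,2}+1)$, $(K_{0,1}+1)^3(K_{1,2}+1)$, and $(K_{0,1}+1)^4(K_{2,2}+1)$ and check term by term that they are $\ll n^2$, $\ll n^3$, and $\ll n^4$ respectively. In each case the unique top-degree term ($K_{0,1}^2K_{0,2}$, $K_{0,1}^3K_{1,2}$, $K_{0,1}^4K_{2,2}$) is handled exactly by (C2.3), and every remaining term is a monomial of strictly smaller total degree in quantities each $\ll n$, hence negligible after dividing by the appropriate power of $n$ by (C2.1*). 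Writing $A_1,A_2,A_3$ for these three products (all $\ge 1$, so no division issues), this step yields $g_1(n):=(n^2/A_1)^{1/3}\to\infty$, $g_2(n):=(n^3/A_2)^{1/4}\to\infty$, and $g_3(n):=(n^4/A_3)^{1/5}\to\infty$.

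The crux is then a single diagonalization: from the finitely many diverging sequences $\{f_{i,j}\}$ and $g_1,g_2,g_3$, I would produce one sequence $t(n)\to\infty$ growing strictly slower than all of them. Taking $t(n)=\sqrt{\min\{f_{0,1},\dots,f_{2,2},g_1,g_2,g_3\}}$ works: the minimum of finitely many sequences tending to infinity still tends to infinity, so $t(n)\to\infty$, and $t(n)$ divided by any one of these sequences is at most the reciprocal square root of that sequence, hence $\to 0$. This is the only step that is not purely mechanical, and it is standard.

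Finally I would verify the three bullets for $Q_{i,j}(n)=t(n)(K_{i,j}(n)+1)$. The lower bound is immediate from $Q_{i,j}(n)\ge t(n)\to\infty$. For the sandwich, $K_{i,j}/Q_{i,j}\le 1/t\to0$ gives $K_{i,j}\ll Q_{i,j}$, while $Q_{i,j}/n=t/f_{i,j}\to0$ gives $Q_{i,j}\ll n$. For the products the common multiplier collapses everything to powers of $t/g_k$: indeed $Q_{0,1}^2Q_{0,2}/n^2=t^3A_1/n^2=(t/g_1)^3\to0$, and likewise $Q_{0,1}^3Q_{1,2}/n^3=(t/g_2)^4\to0$ and $Q_{0,1}^4Q_{2,2}/n^4=(t/g_3)^5\to0$. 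I expect the main obstacle to be the bookkeeping in the expansion step and the need for the single multiplier $t$ to respect all constraints at once, both of which are resolved by the $\min$/square-root device above.
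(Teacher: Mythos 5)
Your proposal is correct, and it takes a genuinely different route from the paper. The paper builds each $Q_{i,j}(n)$ separately by geometric-mean interpolation (e.g.\ $Q_{1,1}(n)=\max\{1,\sqrt{nK_{1,1}(n)}\}$), and handles the coupling between $Q_{0,1}$ and $Q_{0,2},Q_{1,2},Q_{2,2}$ imposed by the product conditions by first constructing three intermediate candidates $R_{0,1},S_{0,1},T_{0,1}$ (one per product constraint) and then setting $Q_{0,1}$ to be the maximum of $1$ and their asymptotic minimum; the verification that all properties survive this min/max combination is left to the reader. You instead rescale every $K_{i,j}(n)+1$ by a \emph{single} multiplier $t(n)\to\infty$, which decouples nothing but rather reduces every one of the eight constraints to a statement of the form $t(n)/h(n)\to 0$ for an explicitly diverging $h$, so that $t(n)=\sqrt{\min_h h(n)}$ settles all of them at once. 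The one piece of real work in your version, checking that the shifted products $(K_{0,1}+1)^2(K_{0,2}+1)$ etc.\ are still $\ll n^2$ etc., is a correct term-by-term expansion: the top-degree term is exactly (C2.3) and every lower term is a product of factors each $\ll n$ of total degree at most the target exponent. Your construction buys uniformity and a fully explicit verification of the third bullet, at the cost of the small expansion argument; the paper's buys $Q_{i,j}$ that are individually ``as large as possible'' in each direction, which it does not actually need. Both are valid proofs of the lemma.
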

\begin{proof}

We begin with $Q_{1,1}(n)$, which will be straightforward.  Set 
\[
    Q_{1,1}(n)=\max\{1,\sqrt{nK_{1,1}(n)}\}.
\]
From $K_{1,1}(n)\ll n$ in (C2.1$^*$) we have that $K_{1,1}(n) \ll Q_{1,1}(n) \ll n$ and $\lim_{n\to\infty}Q_{1,1}(n) >0$.

We turn to constructing $Q_{0,2}$.  In order to eventually convert the condition  $K_{0,1}(n)^2K_{0,2}(n)\ll n^2$ to the condition  $Q_{0,1}(n)^2Q_{0,2}(n)\ll n^2$, we will first construct a function $R_{0,1}(n)$, which satisfies $R_{0,1}(n)^2K_{0,2}(n)\ll n^2$.    We will then use $R_{0,1}(n)$ to build $Q_{0,2}(n)$ satisfying $R_{0,1}(n)^2Q_{0,2}(n)\ll n^2$.  After producing the pair $(R_{0,1}(n),Q_{0,1}(n)),$ we turn to producing similar pairs $(S_{0,1}(n),Q_{1,2}(n))$ and $(T_{0,1}(n),Q_{2,2}(n))$, each satisfying similar inequalities.  We will then define $Q_{0,1}(n)$ via the functions $R_{0,1}(n),S_{0,1}(n),T_{0,1}(n)$, and the proof will be complete.

Proceeding, we note that since $K_{0,1}(n)^2K_{0,2} \ll n^2$, we have $K_{0,1}(n)\ll \frac{n}{\sqrt{K_{0,2}(n)}}.$ By (C2.1$^*$), we have $K_{0,1}(n)\ll n$ as well. Let 
\[R_{0,1}(n)=\min\bigg\{\sqrt{K_{0,1}(n)\frac{n}{\sqrt{K_{0,2}(n)}}}, \sqrt{nK_{0,1}(n)} \bigg\}.\] 
The asymptotic inequalities above yield $K_{0,1}(n)\ll R_{0,1}(n)\ll n$ and $R_{0,1}(n)^2K_{0,2}(n)\ll n^2$. The final inequality implies $K_{0,2}(n)\ll\frac{n^2}{R_{0,1}(n)^2}$. We also have $K_{0,2}(n)\ll n$ from condition (C2.1$^*$). Finally, let
\[
Q_{0,2}(n)=\max\bigg\{1,\min\bigg\{ \sqrt{K_{0,2}(n)\frac{n^2}{R_{0,1}(n)^2}}, \sqrt{nK_{0,2}(n)} \bigg\}\bigg\}
\]
where the minimum is interpreted asymptotically as $n\to\infty$. Then we have $K_{0,2}(n)\ll Q_{0,2}(n)\ll n$ and $R_{0,1}(n)^2Q_{0,2}(n)\ll n^2$. 

We  mimic the above strategy and produce pairs of functions  $(S_{0,1}(n),Q_{1,2}(n))$ and $(T_{0,1}(n),Q_{2,2}(n))$ such that
\begin{itemize}
    \item $K_{0,1}(n)\ll S_{0,1}(n)\ll n$, $K_{1,2}(n)\ll Q_{1,2}(n) \ll n$, $\lim_{n\to\infty}Q_{1,2}(n) >0$ and $S_{0,1}(n)^3Q_{1,2}(n)\ll n^3$.
    \item $K_{0,1}(n)\ll T_{0,1}(n)\ll n$, $K_{2,2}(n)\ll Q_{2,2}(n) \ll n$, $\lim_{n\to\infty}Q_{2,2}(n) >0$ and $T_{0,1}(n)^4Q_{2,2}(n)\ll n^4$.
\end{itemize}
Finally, let
\[
Q_{0,1}(n)=\max\{1,\min\{R_{0,1}(n), S_{0,1}(n), T_{0,1}(n)\}\},
\]
where the minimum is interpreted asymptotically as $n\to\infty$. We now have all the $Q_{i,j}(n)$, and all the desired properties are straightforward to confirm.
\end{proof}

We turn to the main proof of Theorem \ref{main2}. The main proof utilizes some technical results, which will be proven in several lemmas after the main proof. 
\begin{proof}[Proof of Theorem \ref{main2}]
Assume that conditions (C2.1$^*$) and (C2.3) hold. We have
\begin{align}\label{eqjk23498u234}
    \P(\delta_{R_n}=0) = &\P(\delta_{R_n}=0,\cap_{i,j}\{M_{i,j}(n) \leq Q_{i,j}(n)\}) +\P(\delta_{R_n}=0,\cup_{i,j}\{M_{i,j}(n) > Q_{i,j}(n)\})
\end{align}
We will show that the second term goes to zero.  Since each $M_{i,j}(n)$ has a binomial distribution, we have
\begin{align*}
\P(M_{i,j}(n) > Q_{i,j}(n)) &= \P(M_{i,j}(n)-K_{i,j}(n)>Q_{i,j}(n)-K_{i,j}(n))\\
&\leq \frac{\var(M_{i,j}(n))}{(Q_{i,j}(n)-K_{i,j}(n))^2}\leq \frac{K_{i,j}(n)}{(Q_{i,j}(n)-K_{i,j}(n))^2}.
\end{align*}
Since $K_{i,j}(n)\ll Q_{i,j}(n)$ and $\lim_{n\to\infty}Q_{i,j}(n)>0$, we have $\lim_{n\to\infty}\P(M_{i,j}(n) > Q_{i,j}(n)) =0$ for all $(i,j)$. Thus
\begin{align}\label{eq8234nknj}
\lim_{n\to\infty} \P(\cup_{i,j}\{M_{i,j}(n) > Q_{i,j}(n)\})=0,
\end{align}
and consequently,
\begin{align}\label{eq32453249234}
\lim_{n\to\infty}\P(\delta_{R_n}=0,\cup_{i,j}\{M_{i,j}(n) > Q_{i,j}(n)\})=0.
\end{align}

Now we consider the first term in \eqref{eqjk23498u234}. We have
\begin{align*}
    \P(\delta_{R_n}=0&,\cap_{i,j}\{M_{i,j}(n) \leq Q_{i,j}(n)\})\nonumber\\
    &=\sum_{k_{i,j}(n)=0}^{Q_{i,j}(n)}\P(\delta_{R_n}=0|\cap_{i,j}\{M_{i,j}(n)=k_{i,j}(n)\})\P(\cap_{i,j}\{M_{i,j}(n)=k_{i,j}(n)\})\nonumber
\end{align*}
We will prove in Lemma \ref{lemma:main} below that
\begin{equation}\label{eq:345677}
\P(\delta_{R_n}=0|\cap_{i,j}\{M_{i,j}(n)=k_{i,j}(n)\}) \geq 1-C_3\frac{Q(n)}{n},
\end{equation}
where $Q(n)$ is a function satisfying $Q(n)\ll n$ and $C_3$ is independent from $n$ and $k_{i,j}(n)$. Thus
\begin{align}
    \P(\delta_{R_n}=0,\cap_{i,j}\{M_{i,j}(n) \leq Q_{i,j}(n)\})&\geq \bigg(1-C_3\frac{Q(n)}{n}\bigg)\sum_{k_{i,j}(n)=0}^{Q_{i,j}}\P(\cap_{i,j}\{M_{i,j}(n)=k_{i,j}(n)\})\nonumber\\
    &=\bigg(1-C_3\frac{Q(n)}{n}\bigg)\P(\cap_{i,j}\{M_{i,j}(n) \leq Q_{i,j}(n)\}).\label{eq23894n123}
\end{align}
Equation \eqref{eq8234nknj} gives us $\lim_{n\to\infty}\P(\cap_{i,j}\{M_{i,j}(n) \leq Q_{i,j}(n)\})=1$, thus from \eqref{eq23894n123} we have
\begin{align}\label{eq892348u9jhi23iu}
    \lim_{n\to\infty}\P(\delta_{R_n}=0&,\cap_{i,j}\{M_{i,j}(n) \leq Q_{i,j}(n)\}) = 1.
\end{align}
Combining \eqref{eqjk23498u234}, \eqref{eq32453249234}, and \eqref{eq892348u9jhi23iu} we have
\[
\lim_{n\to\infty}\P(\delta_{R_n}=0)=1.\qedhere
\]
\end{proof}

To complete this section, we will provide a series of lemmas, eventually  leading to Lemma \ref{lemma:main}, which yields the critical bound \eqref{eq:345677}
\[
\P(\delta_{R_n}=0|\cap_{i,j}\{M_{i,j}(n)=k_{i,j}(n)\}) \geq 1-C_3\frac{Q(n)}{n}.
\]
First we make an observation about the most probable number of species in realized reactions from each set $E_n^{i,j}$. Note that a reaction in the set $E_n^{0,2}$ can have either one or two distinct species appearing in it.  For example, we could have $\emptyset \leftrightharpoons 2S_1$, in which there is only one species, or we could have $\emptyset \leftrightharpoons S_1 + S_2$, in which there are two species.  Similarly, reactions from the set $E_n^{1,2}$ can have one, two, or three distinct species, and reactions from the set $E_n^{2,2}$ can have two, three, or four distinct species.  The following lemma states that when the number of realized reactions in each set is not too large, as quantified below, then, with probability approaching one as $n\to \infty$, the realized reactions from each set will consist of the maximal number of distinct species. 
\begin{lemma}\label{lemma:species}
Suppose conditions (C2.1*) and (C2.3) hold and that $Q_{i,j}(n)$ are defined as in Lemma \ref{lemma:buffer}.  Suppose further that $k_{i,j}(n) \leq Q_{i,j}(n)$. Let $A^{0,2}_n$, $A^{1,2}_n$, and $A^{2,2}_n$ be the events that the realized reactions in $E^{0,2}_n,E^{1,2}_n,E^{2,2}_n$  all have precisely  2,3, and 4 distinct species respectively. Let $A_n=A^{0,2}_n \cap A^{1,2}_n \cap A^{2,2}_n$ Then 
\[
\lim_{n\to \infty} \P(A_n|  \cap_{i,j}\{M_{i,j}(n)=k_{i,j}(n)\}) = 1.
\]
Moreover, we have the explicit bound
\[
\P(A_n|  \cap_{i,j}\{M_{i,j}(n)=k_{i,j}(n)\})\geq \bigg(1-\frac{2Q_{0,2}(n)}{n}\bigg) \bigg(1-\frac{4Q_{1,2}(n)}{n}\bigg) \bigg(1-\frac{8Q_{2,2}(n)}{n} \bigg).
\]
\end{lemma}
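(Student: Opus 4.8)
The plan is to condition on the realized counts and exploit two structural facts about independent edges. First I would note that, for independent Bernoulli edges, conditioning on $\cap_{i,j}\{M_{i,j}(n)=k_{i,j}(n)\}$ makes the realized reactions in each class $E^{i,j}_n$ a \emph{uniformly chosen} $k_{i,j}(n)$-subset of $E^{i,j}_n$, and, since edges in different classes are independent, these subsets are mutually independent across classes. Because $A^{0,2}_n$, $A^{1,2}_n$, $A^{2,2}_n$ depend on disjoint classes, the conditional probability factorizes:
\[
\P(A_n \mid \cap_{i,j}\{M_{i,j}(n)=k_{i,j}(n)\}) = \prod_{(i,j)} \P(A^{i,j}_n \mid M_{i,j}(n)=k_{i,j}(n)),
\]
where the product is over $(i,j)\in\{(0,2),(1,2),(2,2)\}$. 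So it suffices to lower bound each factor separately.

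Within a fixed class, I would call a reaction \emph{degenerate} if it has strictly fewer than the maximal number of distinct species ($2$, $3$, $4$ for $(0,2),(1,2),(2,2)$ respectively), and write $D^{i,j}_n$ for the set of degenerate reactions. Then $(A^{i,j}_n)^c$ is exactly the event that the random subset meets $D^{i,j}_n$. Since each fixed reaction $e$ lies in a uniform $k$-subset with probability exactly $k/|E^{i,j}_n|$, a union bound over $D^{i,j}_n$ gives, using $k_{i,j}(n)\le Q_{i,j}(n)$,
\[
\P\big((A^{i,j}_n)^c \mid M_{i,j}(n)=k_{i,j}(n)\big) \le |D^{i,j}_n|\cdot \frac{k_{i,j}(n)}{|E^{i,j}_n|} \le |D^{i,j}_n|\cdot \frac{Q_{i,j}(n)}{|E^{i,j}_n|}.
\]
It then remains to count $|D^{i,j}_n|$ and check that the ratio $|D^{i,j}_n|/|E^{i,j}_n|$ is at most $2/n$, $4/n$, $8/n$. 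For $(0,2)$ the degenerate reactions are exactly $\emptyset\leftrightharpoons 2S_i$, so $|D^{0,2}_n|=n$ and the ratio is $2/(n+1)\le 2/n$. For $(1,2)$, degeneracy means a coincidence among the indices of $S_i\leftrightharpoons S_j+S_k$; a short enumeration gives $|D^{1,2}_n|=2n^2-n$ and a ratio $\le 4/n$. For $(2,2)$, subtracting the number of reactions with four distinct species, $3\binom{n}{4}$, from $|E^{2,2}_n|=\binom{n(n+1)/2}{2}$ yields $|D^{2,2}_n|\sim n^3$ and a ratio that is at most $8/n$ for large $n$. Substituting these three ratios into the factorization produces the claimed explicit lower bound.

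To finish, each factor $1-c_{i,j}Q_{i,j}(n)/n$ tends to $1$ because $Q_{i,j}(n)\ll n$ by Lemma \ref{lemma:buffer}, so the product tends to $1$ and $\lim_{n\to\infty}\P(A_n\mid \cdots)=1$.

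The hard part will be the exact degenerate-reaction count for $(1,2)$ and especially $(2,2)$: a binary complex can be either $2S_i$ or $S_i+S_j$, and one must respect the unordered nature of complexes together with the constraint that source $\ne$ product, so several collision patterns have to be enumerated and combined to confirm the clean constants $4$ and $8$. The probabilistic skeleton (conditional uniformity, cross-class independence, and the union bound) is routine; the combinatorial bookkeeping is where the care is needed.
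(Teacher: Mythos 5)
Your proposal is correct and follows essentially the same route as the paper: factor the conditional probability over the three classes by independence, count the degenerate reactions in each class (your counts, including the complementary count $3\binom{n}{4}$ for $E^{2,2}_n$, agree exactly with the paper's $n$, $2n^2-n$, and $\tfrac{n(n-1)(2n-1)}{2}$), and show the per-reaction degeneracy ratios are at most $2/n$, $4/n$, $8/n$. The only difference is cosmetic: you bound the probability of hitting a degenerate reaction by a direct union bound $|D^{i,j}_n|\,k_{i,j}(n)/|E^{i,j}_n|$, whereas the paper writes the same quantity as a telescoping product and applies Bernoulli's inequality; both yield the identical explicit bound.
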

\begin{proof}
First, consider the reactions in $E^{0,2}_n$, which have the form $\emptyset \leftrightharpoons S_i+S_j$. These reactions have 2 species if and only if $i\neq j$. Recall that $|E^{0,2}_n|=n(n+1)/2$, and there are $n$ reactions of the form $2S_i$. Thus we have
\begin{align}
\P(A^{0,2}_n| &M_{0,2}(n)=k_{0,2}(n)) \nonumber \\
&= \bigg(1-\frac{n}{n(n+1)/2}\bigg)\bigg(1-\frac{n}{n(n+1)/2-1}\bigg)\cdots \bigg(1-\frac{n}{n(n+1)/2-k_{0,2}(n)+1}\bigg) \nonumber \\ 
&=\bigg(1-\frac{2}{n+1}\bigg)\bigg(1-\frac{2}{n+1-\frac{2}{n}}\bigg)\cdots \bigg(1-\frac{2}{n+1-\frac{2(k_{0,2}(n)-1)}{n}}\bigg)  \nonumber  \\ 
&\geq \bigg(1-\frac{2}{n}\bigg)^{k_{0,2}(n)} \nonumber \\
&\geq 1-\frac{2k_{0,2}(n)}{n}, \label{e189345jkh}
\end{align}
where the last inequality is due to Bernoulli's inequality. 

Next, consider the reactions in $E^{1,2}_n$.  These reactions have less than 3 species if it is either $S_i\leftrightharpoons S_i+S_j$ (where $i$ and $j$ are not necessarily different) or $S_i\to 2S_j$ (where $i\neq j$). It is straightforward to check that there are $n^2$ reactions of the former type, and there are $n(n-1)$ reactions of the latter type, both of which add up to $n(2n-1)$ reactions in $E^{1,2}_n$ with less than 3 species. Since $|E_n^{1,2}|=\frac{n^2(n+1)}{2}$ we have
\begin{align}
\P(A^{1,2}_n|&M_{1,2}(n)=k_{1,2}(n)) \nonumber \\
&= \bigg(1-\frac{n(2n-1)}{n^2(n+1)/2}\bigg)\bigg(1-\frac{n(2n-1)}{n^2(n+1)/2-1}\bigg)\cdots\bigg(1-\frac{n(2n-1)}{n^2(n+1)/2-k_{1,2}(n)+1}\bigg)  \nonumber \\ 
&\geq \bigg(1-\frac{4}{n}\bigg)^{k_{1,2}(n)} \nonumber  \\ 
&\geq 1-\frac{4k_{1,2}(n)}{n}, \label{eqn32491823oi}
\end{align}
where the first inequality here follows a similar argument to the first inequality in \eqref{e189345jkh}.

Finally, consider the reactions in $E^{2,2}_n$. These reactions have less than 4 species if they have the form $2S_i\leftrightharpoons 2S_j$, $2S_i\leftrightharpoons S_j+S_k$ (where $j\neq k$), or $S_i+S_j\leftrightharpoons S_i+S_k$ (where $i,j,k$ are pairwise different). It is straightforward to check that there are $\frac{n(n-1)}{2}$ reactions of the first type, $n(\frac{n(n+1)}{2}-n)$ reactions of the second type, and $(\frac{n(n+1)}{2}-n)(n-2)$ reactions of the third type. In total, there are $\frac{n(n-1)(2n-1)}{2}$ reactions in $E^{2,2}_n$ with less than 4 species. Since $|E_n^{2,2}|={\frac{n(n+1)}{2}\choose 2}=\frac{n(n+1)(n-1)(n+2)}{8}$, we have
\begin{align}
\P(A^{2,2}_n|&M_{2,2}(n)=k_{2,2}(n)) \nonumber\\
&=\bigg(1-\frac{\frac{n(n-1)(2n-1)}{2}}{\frac{n(n+1)(n-1)(n+2)}{8}}\bigg)\cdots\bigg(1-\frac{\frac{n(n-1)(2n-1)}{2}}{\frac{n(n+1)(n-1)(n+2)}{8}-k_{2,2}(n)+1}\bigg) \nonumber\\ 
&\geq \bigg(1-\frac{8}{n}\bigg)^{k_{2,2}(n)} \nonumber  \\ 
&\geq 1-\frac{8k_{2,2}(n)}{n},\label{eq123k123}
\end{align}
where the first inequality here follows a similar argument as the first inequality in \eqref{e189345jkh}.

From \eqref{e189345jkh},\eqref{eqn32491823oi},\eqref{eq123k123}, and independence, we have  
\begin{align*}
\P(A_n|&  \cap_{i,j}\{M_{i,j}(n)=k_{i,j}(n)\})] \nonumber \\
&\geq \bigg(1-\frac{2k_{0,2}(n)}{n}\bigg) \bigg(1-\frac{4k_{1,2}(n)}{n}\bigg) \bigg(1-\frac{8k_{2,2}(n)}{n} \bigg) \nonumber\\
&\geq \bigg(1-\frac{2Q_{0,2}(n)}{n}\bigg) \bigg(1-\frac{4Q_{1,2}(n)}{n}\bigg) \bigg(1-\frac{8Q_{2,2}(n)}{n} \bigg),
\end{align*}
and the limit follows.
\end{proof}

In our next major lemma, Lemma \ref{lemma:condition},  we require the notion of a minimally dependent set, which we define below.
\begin{definition}
We say a set of vectors is minimally dependent if it is linearly dependent and any of its proper subsets are linearly independent.
\end{definition}
We make a quick observation on minimally dependent set.
\begin{lemma}\label{lemma:row}
Let $M$ be a matrix whose columns $v_1,v_2,\dots,v_m$ are minimally dependent. Then $M$ has no row with only one non-zero entry. 
\end{lemma}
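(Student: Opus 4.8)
The plan is to argue by contradiction, and the whole argument hinges on first extracting the right consequence of minimal dependence. Since the columns $v_1,\dots,v_m$ are linearly dependent, there is a relation $\sum_{i=1}^m c_i v_i = 0$ with the $c_i$ not all zero. I would first observe that in fact \emph{every} coefficient $c_i$ must be nonzero. Indeed, if some $c_i = 0$, then the relation involves only the proper subset $\{v_i : c_i \neq 0\}$, which would then be linearly dependent, contradicting the assumption that every proper subset is linearly independent. So the starting point is: the coefficient vector $(c_1,\dots,c_m)$ has no zero entry.

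Next I would suppose, toward a contradiction, that $M$ has a row, say row $k$, with exactly one nonzero entry, located in column $j$; that is, $M_{kj} \neq 0$ while $M_{ki} = 0$ for all $i \neq j$. Reading off the $k$-th coordinate of the vector identity $\sum_{i=1}^m c_i v_i = 0$ yields $\sum_{i=1}^m c_i M_{ki} = 0$. Since every term with $i \neq j$ vanishes by assumption on row $k$, this collapses to the single equation $c_j M_{kj} = 0$.

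To finish, I would invoke the first step: $c_j \neq 0$, and by hypothesis $M_{kj} \neq 0$, so $c_j M_{kj} \neq 0$, contradicting the equation just obtained. Hence no row of $M$ can have only one nonzero entry. The argument is short, and the only step requiring genuine care is the first one—correctly using minimal dependence to force all coefficients to be nonzero, which is exactly what makes the row equation collapse to a single nonvanishing term. I do not expect any real obstacle beyond stating that step cleanly.
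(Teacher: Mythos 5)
Your proof is correct and follows essentially the same route as the paper's: both arguments take a dependence relation $\sum_i c_i v_i = 0$, observe that a row with a single non-zero entry forces the corresponding coefficient to vanish, and use minimal dependence to rule this out. The only difference is bookkeeping—you first show all coefficients are non-zero and then contradict the row equation, while the paper deduces $c_j=0$ from the row equation and then contradicts minimal dependence—but the underlying argument is identical.
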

\begin{proof}
Since $v_1,\dots,v_m$ are dependent, there exist constants $\alpha_1,\dots,\alpha_m$, not all of which are zero, such that
\[
    \alpha_1v_1+\dots+\alpha_m v_m=0.
\]
Suppose by contradiction that $M$ has a row with only one non-zero entry, and suppose that entry belongs to the $i${th} column. Then this must imply $\alpha_i=0$.  However, this implies that
\[
    \sum_{j \ne i} \alpha_j v_j = 0,
\]
with not all $\alpha_j$ equaling zero.  This contradicts the set $\{v_i\}_{i = 1}^m$ being minimally dependent.
\end{proof}
An example related to minimal dependence in the context of reaction network is the network $\emptyset\leftrightharpoons S_1, \emptyset\leftrightharpoons S_2, \emptyset\leftrightharpoons S_3, \emptyset \leftrightharpoons S_1 + S_2$, whose reaction vectors are dependent, but not minimally dependent because the proper subset containing $\emptyset\leftrightharpoons S_1, \emptyset\leftrightharpoons S_2, \emptyset \leftrightharpoons S_1+S_2$ is dependent. In the next lemma, we will show that 
for a set of reaction vectors to be minimally dependent, there cannot be too many reactions from $E_n^{0,1}$, relative to the numbers from $E^{0,2}_n,E^{1,2}_n,E^{2,2}_n$.

\begin{lemma}\label{lemma:condition}
 Suppose a set $V$ with $i_1,i_2,i_3,i_4,i_5$ reaction vectors in $E^{0,1}_n,E^{1,1}_n,E^{0,2}_n,E^{1,2}_n,E^{2,2}_n$, respectively, is minimally dependent. Assume further that each of the $i_3,i_4$, and $i_5$ reactions from $E^{0,2}_n, E_n^{1,2},E_n^{2,2}$ have precisely 2,3, and 4 species, respectively, and that  $i_3+i_4+i_5>0$. Then we must have
\begin{align*}
    i_1\leq 2i_3+3i_4+4i_5.
\end{align*}
\end{lemma}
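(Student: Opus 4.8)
The plan is to build the $n\times m$ matrix $M$ whose $m=i_1+i_2+i_3+i_4+i_5$ columns are the reaction vectors in $V$, and then squeeze the number $|S|$ of species appearing in $V$ (equivalently, the number of nonzero rows of $M$) between a lower bound coming from minimal dependence and an upper bound coming from Lemma \ref{lemma:row}. The point is that each reaction vector has an explicit form ($e_i$ for $E^{0,1}_n$, $e_j-e_i$ for $E^{1,1}_n$, $e_i+e_j$ for $E^{0,2}_n$, $e_j+e_k-e_i$ for $E^{1,2}_n$, $e_h+e_k-e_i-e_j$ for $E^{2,2}_n$), so both bounds are computable in terms of the $i_k$.

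For the lower bound on $|S|$: since $V$ is minimally dependent, deleting any single vector leaves a linearly independent set, so $\mathrm{rank}(M)=m-1$. Because rank is at most the number of nonzero rows, this yields $|S|\ge m-1 = i_1+i_2+i_3+i_4+i_5-1$. For the upper bound, I would count species–reaction incidences. Writing $d(s)$ for the number of columns of $M$ with a nonzero entry in row $s$, summing over reactions and using the stated species counts (an $E^{0,1}_n$ reaction involves one species, an $E^{1,1}_n$ reaction two, and by hypothesis the $E^{0,2}_n,E^{1,2}_n,E^{2,2}_n$ reactions involve exactly $2,3,4$ distinct species) gives $\sum_{s\in S}d(s)=i_1+2i_2+2i_3+3i_4+4i_5$. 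Lemma \ref{lemma:row} forbids any row with a single nonzero entry, so $d(s)\ge 2$ for every $s\in S$, whence $2|S|\le \sum_{s\in S}d(s)$ and $|S|\le \tfrac12\bigl(i_1+2i_2+2i_3+3i_4+4i_5\bigr)$.

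Combining the two bounds, $m-1\le \tfrac12(i_1+2i_2+2i_3+3i_4+4i_5)$, and after the $i_2$ and $i_3$ contributions cancel this simplifies to $i_1\le i_4+2i_5+2$. The final move is to invoke the hypothesis $i_3+i_4+i_5>0$, i.e.\ $2\le 2(i_3+i_4+i_5)$, and absorb the constant: $i_1\le i_4+2i_5+2(i_3+i_4+i_5)=2i_3+3i_4+4i_5$, which is the desired inequality.

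The step I expect to be the genuine obstacle is the last one. The rank-plus-degree count does not produce the target inequality directly; it produces the weaker-looking $i_1\le i_4+2i_5+2$, in which $i_3$ does not even appear and the coefficients are wrong. The essential insight is that the standing assumption $i_3+i_4+i_5>0$ is exactly what is needed to convert the additive constant $2$ into the missing $2i_3+2i_4+2i_5$ on the right-hand side. Recognizing that the hypothesis enters only at this absorption step, and that the maximal–species assumption is precisely what makes the incidence sum evaluate to $i_1+2i_2+2i_3+3i_4+4i_5$, is the crux; a more naive attempt to argue that every $E^{0,1}_n$-species must touch an $E^{0,2}_n\cup E^{1,2}_n\cup E^{2,2}_n$ reaction runs into difficulty because such a species can a priori be ``covered'' through a chain of $E^{1,1}_n$ reactions, so routing the argument through rank avoids that complication entirely.
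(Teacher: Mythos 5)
Your proof is correct, and it takes a genuinely different route from the paper's. The paper argues sequentially: it places the $i_1$ canonical vectors from $E^{0,1}_n$ first (creating $i_1$ rows with a single nonzero entry), shows via a full-rank-of-every-prefix observation that appending the $E^{1,1}_n$ columns one at a time can never decrease the number of single-entry rows, and then concludes from Lemma \ref{lemma:row} that the remaining $i_3+i_4+i_5$ columns, which contain at most $2i_3+3i_4+4i_5$ nonzero entries in total, must cover all of those single-entry rows. You instead combine two global counts: minimal dependence forces $\mathrm{rank}(M)=m-1$, which bounds the number of nonzero rows below by $m-1$, while Lemma \ref{lemma:row} forces every nonzero row to have at least two entries, so double-counting the $i_1+2i_2+2i_3+3i_4+4i_5$ nonzero entries bounds the number of nonzero rows above; the hypothesis $i_3+i_4+i_5>0$ then absorbs the leftover constant. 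Both uses of the hypothesis are legitimate (the paper needs it to guarantee the prefixes are full rank; you need it only at the final absorption). Your argument is shorter, avoids the sequential bookkeeping entirely, and in fact yields the intermediate inequality $i_1\le i_4+2i_5+2$, which is strictly sharper than the stated conclusion whenever $i_3+i_4+i_5\ge 2$ — more than is needed for the application in Lemma \ref{lemma:main}, where the paper's bound already suffices.
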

\begin{proof}
Consider a matrix $M$ whose first $i_1$ columns are the reaction vectors from $V \cap E^{0,1}_n$, the next $i_2$ columns are the reaction vectors from $V\cap E^{1,1}_n$, the next $i_3$ columns are the reaction vectors from $E^{0,2}_n$, etc.     Let $P$ be the sub-matrix consisting of the first $i_1+i_2$ columns of $M$ (so it is constructed by the reaction vectors from $V \cap E^{0,1}_n$ followed by the reaction vectors from $V \cap E^{1,1}_n$).  

Since $V$ is minimally dependent, Lemma \ref{lemma:row} tells us that $M$ has no row with only one non-zero entry. 
Let $z_{i_1+i_2}$ be the number of  rows of $P$ with exactly one entry.  By construction, the final $i_3 + i_4 + i_5$ columns of $M$ have at most
\[
    2i_3 + 3i_4 + 4i_5
\]
non-zero elements.  Therefore, we must have
\[
    z_{i_1+i_2} \le  2i_3 + 3i_4 + 4i_5,
\]
for otherwise there are not enough non-zero terms in the final $i_3 + i_4 + i_5$ columns to cover the rows of $P$ with a single element.
The remainder of the proof just consists of showing that 
\begin{equation}\label{eq:44444}
    i_1 \le z_{i_1 + i_2}.
\end{equation}
To show that the inequality \eqref{eq:44444} holds, we consider adding the column vectors sequentially, and make the following observations.
\begin{enumerate}
    \item The first $i_1$ columns of $M$ can, without loss of generality, be taken to be the canonical vectors $e_{1}, \dots, e_{i_1}$.  Note, therefore, that the sub-matrix consisting of the first $i_1$ columns of $M$ has exactly $i_1$ rows that have a single non-zero entry.
    
    \item The rank of the sub-matrix of $P$ consisting of the first $i_1 + k$ columns must be $i_1 +k$ for any $0\le k \le i_2$, for otherwise there is a dependence and $V$ would not be minimally dependent (here we are explicitly using that $i_3 + i_4 + i_5 > 0)$.  
    \item Consider the action of going from a sub-matrix of $P$ consisting of the first $i_1 +k$ columns to one consisting of the first $i_1 + k +1$ columns, for $k\le i_2 -1$.  Since each such sub-matrix is full rank (by the point made above), the addition of the next column vector in the construction  must have at least one element in a row that was previously all zeros.  
    \item Since each column vector being added has at most two elements, the number of rows with a single entry can never decrease.
\end{enumerate}
Hence, we have that the number of rows with precisely one non-zero entry at the end of the construction, $z_{i_1 + i_2}$ must be at least as large as the number at the beginning of the construction, $i_1$, and we are done. 
\end{proof}

Finally, we present the main lemma, giving the bound needed for Theorem \ref{main2}. Note that a positive deficiency must imply the existence of a minimally independent set. Thus the main approach of the proof revolves around summing over the probabilities of each certain set of reaction vectors being minimally dependent. The constraint in Lemma \ref{lemma:condition} will play a critical role in this approach.

\begin{lemma}\label{lemma:main}
Suppose conditions (C2.1*) and (C2.3) hold and that $Q_{i,j}(n)$ are  as in Lemma \ref{lemma:buffer}.  Suppose further that $k_{i,j}(n) \leq Q_{i,j}(n)$ for each relevant pair $(i,j)$.
Then
\[
\P(\delta_{R_n}=0 | \cap_{i,j}\{M_{i,j}(n)=k_{i,j}(n)\}) \geq 1-C_3\frac{Q(n)}{n},
\]
where $Q(n)$ is a function satisfying $Q(n)\ll n$ and $C_3$ is independent from $n$ and $k_{i,j}(n)$.
\end{lemma}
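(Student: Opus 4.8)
The plan is to bound $\P(\delta_{G_n}>0 \mid \cap_{i,j}\{M_{i,j}(n)=k_{i,j}(n)\})$ from above by $C_3\,Q(n)/n$. The first and conceptually most important step is to reduce the event $\{\delta_{G_n}>0\}$ to the existence of a minimally dependent set of reaction vectors that contains \emph{at least one} reaction from $E^{0,2}_n\cup E^{1,2}_n\cup E^{2,2}_n$, i.e.\ one with $i_3+i_4+i_5>0$ in the notation of Lemma \ref{lemma:condition}. This restriction is essential, since minimally dependent sets built only from $E^{0,1}_n$ and $E^{1,1}_n$ (e.g.\ $\emptyset\leftrightharpoons S_1,\ \emptyset\leftrightharpoons S_2,\ S_1\leftrightharpoons S_2$) occur routinely inside deficiency-zero networks and cannot be controlled. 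To justify the reduction I would build the network by first adding all realized reactions from $E^{0,1}_n$ and $E^{1,1}_n$; by Lemma \ref{lemma:unary} this intermediate network has deficiency zero. I then add the reactions touching a binary complex one at a time. Since deficiency is nondecreasing (Lemma \ref{lemma:add_reaction}) and the final deficiency is positive, at some step a binary reaction $R$ is added whose reaction vector lies in the span of the vectors already present; expressing that vector through a linearly independent subset of the earlier vectors produces a circuit containing $R$, hence a minimally dependent set with $i_3+i_4+i_5\ge 1$.

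Next I would intersect with the event $A_n$ from Lemma \ref{lemma:species}, on which every realized reaction from $E^{0,2}_n,E^{1,2}_n,E^{2,2}_n$ has the maximal number of distinct species. Writing $\P(\delta_{G_n}>0\mid\cdots)\le \P(A_n^c\mid\cdots)+\P(\delta_{G_n}>0,A_n\mid\cdots)$, the first term is already $O(Q(n)/n)$ by the explicit bound of Lemma \ref{lemma:species}, once $Q(n)$ is taken to dominate $Q_{0,2}(n),Q_{1,2}(n),Q_{2,2}(n)$. On $A_n$ the hypotheses of Lemma \ref{lemma:condition} are met, so any minimally dependent set with $i_3+i_4+i_5>0$ obeys the key inequality $i_1\le 2i_3+3i_4+4i_5$, which is exactly what makes the remaining union bound summable.

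For the second term I would union bound over all admissible types $(i_1,i_2,i_3,i_4,i_5)$ with $i_3+i_4+i_5>0$ and $i_1\le 2i_3+3i_4+4i_5$. Conditioned on $\{M_{i,j}(n)=k_{i,j}(n)\}$, the realized reactions in each class form a uniformly random subset of size $k_{i,j}(n)$, so the probability that a prescribed collection of $t$ reactions from $E^{i,j}_n$ is realized is at most $(k_{i,j}(n)/|E^{i,j}_n|)^t\le (Q_{i,j}(n)/|E^{i,j}_n|)^t$, with the classes independent. Multiplying by the number of minimally dependent configurations of each type and summing, I expect the leading contributions to be precisely the three families behind Condition (C2.3): the family $\emptyset\leftrightharpoons S_p,\ \emptyset\leftrightharpoons S_q,\ \emptyset\leftrightharpoons S_p+S_q$ contributes on the order of $Q_{0,1}(n)^2 Q_{0,2}(n)/n^2$, and analogously $Q_{0,1}(n)^3 Q_{1,2}(n)/n^3$ and $Q_{0,1}(n)^4 Q_{2,2}(n)/n^4$. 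Each is $\le C\,Q(n)/n$ once $Q(n)$ is also chosen to dominate $Q_{0,1}(n)^2Q_{0,2}(n)/n$, $Q_{0,1}(n)^3Q_{1,2}(n)/n^2$, and $Q_{0,1}(n)^4Q_{2,2}(n)/n^3$; the buffer inequalities of Lemma \ref{lemma:buffer} guarantee such a $Q(n)\ll n$ exists.

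The hard part will be the combinatorial heart of this union bound: controlling the number of minimally dependent configurations of each type \emph{uniformly} over the unbounded parameter $i_2$, which Lemma \ref{lemma:condition} does not constrain, and confirming that all non-leading types are lower order. Here Lemma \ref{lemma:row} is the crucial tool: since no species may appear in only one reaction of a minimally dependent set, the $E^{1,1}_n$ reactions cannot introduce dangling species and must attach to species already forced by the binary reactions, capping the number of fresh species so that each per-reaction factor $Q_{1,1}(n)/|E^{1,1}_n|\approx Q_{1,1}(n)/n^2$ beats the polynomial growth in the count of configurations. Organizing this so that every factor of $n$ from choosing a species is matched against a factor $Q_{i,j}(n)/|E^{i,j}_n|$, while the constraint $i_1\le 2i_3+3i_4+4i_5$ absorbs the dangerous $E^{0,1}_n$ reactions, is where the bulk of the work lies and is what ultimately pins down an admissible $Q(n)$ and the constant $C_3$.
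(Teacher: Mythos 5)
Your plan follows the paper's proof essentially step for step: the reduction of $\{\delta_{G_n}>0\}$ to the existence of a minimally dependent set containing at least one reaction from $E^{0,2}_n\cup E^{1,2}_n\cup E^{2,2}_n$, the conditioning on the event $A_n$ of Lemma \ref{lemma:species} so that Lemma \ref{lemma:condition} applies, the union bound over types $(i_1,\dots,i_5)$ restricted by $i_3+i_4+i_5>0$ and $i_1\le 2i_3+3i_4+4i_5$, and the use of Lemma \ref{lemma:row} to control the combinatorics are all exactly the paper's ingredients; your identification of the dominant families and of what $Q(n)$ must dominate is consistent with the paper's choice $Q(n)=\max\{Q_{i,j}(n),\,Q_{0,1}(n)Q_{0,2}(n)^{1/2},\,Q_{0,1}(n)Q_{1,2}(n)^{1/3},\,Q_{0,1}(n)Q_{2,2}(n)^{1/4}\}$. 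Your justification of the reduction (build the unary subnetwork first, which has deficiency zero by Lemma \ref{lemma:unary}, then add reactions touching binary complexes one at a time and extract a circuit at the first step where the deficiency rises) is in fact more explicit than the paper's terse citation of Lemmas \ref{lemma:unary} and \ref{lemma:independent}, and is valid, modulo the small point that one should work with one reaction per reversible pair.

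The genuine gap is that the step you defer as ``where the bulk of the work lies'' is most of the proof, and it is not obvious that your sketch closes it uniformly in all five indices. The paper does not count minimally dependent configurations directly; instead, for each choice of $i_1,\dots,i_5$ realized reactions it bounds the probability that they are minimally dependent by a support argument: by Lemma \ref{lemma:row} every nonzero row of the associated matrix has at least two entries, so the number of species involved is at most $\ell=\lfloor(i_1+2i_2+2i_3+3i_4+4i_5)/2\rfloor$, and summing over the $\binom{n}{\ell}$ choices of support gives $\P(B_I\mid\cdots)\le\binom{n}{\ell}(\ell/n)^{i_1+2i_2+2i_3+3i_4+4i_5}\le(e\ell/n)^{(i_1+2i_2+2i_3+3i_4+4i_5)/2}$. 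Making the resulting five-fold sum come out to $O(Q(n)/n)$ then requires taming the binomial factors $\binom{k_{i,j}}{i}\le(ek_{i,j}/i)^{i}$ via Corollary \ref{cor:inequality}, redistributing powers of $Q_{0,1}(n)$ onto $Q_{0,2}(n),Q_{1,2}(n),Q_{2,2}(n)$ using $i_1\le 2i_3+3i_4+4i_5$, verifying the exponent inequality $i_1+i_2+i_3+i_4+i_5-i_1\tfrac{i_3+i_4+i_5}{2i_3+3i_4+4i_5}\le\tfrac{i_1+2i_2+2i_3+3i_4+4i_5}{2}$, and splitting the sum according to the sign of $-\tfrac{i_1}{2}+\tfrac{i_4}{2}+i_5$. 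Your heuristic that ``each per-reaction factor $Q_{1,1}(n)/n^2$ beats the polynomial growth'' is the right intuition for why the $i_2$-direction is summable, but as stated it does not yet control the interaction between the species count (which grows with $i_2$) and the configuration count, nor does it produce the uniform constant $C_3$ and the precise rate $Q(n)/n$; without carrying out an estimate of this kind the lemma is not proved.
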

\begin{proof}
We have
\begin{align}
\P(&\delta_{R_n}=0|\cap_{i,j}\{M_{i,j}(n)=k_{i,j}(n)\})\nonumber \\
&\geq \P(\delta_{R_n}=0|A_n, \cap_{i,j}\{M_{i,j}(n)=k_{i,j}(n)\})\P(A_n| \cap_{i,j}\{M_{i,j}(n)=k_{i,j}(n)\})\nonumber \\
&=(1-\P(\delta_{R_n}>0|A_n, \cap_{i,j}\{M_{i,j}(n)=k_{i,j}(n)\}))\P(A_n| \cap_{i,j}\{M_{i,j}(n)=k_{i,j}(n)\}).\label{eq234jl23j4l234}
\end{align}
From Lemma \ref{lemma:unary} and Lemma \ref{lemma:independent}, the event $\delta_{R_n}>0$ must imply there exists a minimally dependent set which consists of at least one reaction from $E^{0,2}_n$, $E^{1,2}_n$, or $E^{2,2}_n$. Let $I=(i_1,i_2,i_3,i_4,i_5)$ be a multi-index. Let $K_n=(k_{0,1}(n),k_{1,1}(n),k_{0,2}(n),k_{1,2}(n),k_{2,2}(n))$. For convenience, we write $I\leq K_n$ to represent $i_1\leq k_{0,1}(n), \dots, i_5\leq k_{2,2}(n)$. Then we have
\begin{align}
&\P(\delta_{R_n}>0|A_n, \cap_{i,j}\{M_{i,j}(n)=k_{i,j}(n)\})\nonumber \\
&\leq \sum_{\substack{I\leq K_n\\i_3+i_4+i_5>0\\i_1\leq 2i_3+3i_4+4i_5}}{k_{0,1}(n)\choose i_1}{k_{1,1}(n)\choose i_2}{k_{0,2}(n)\choose i_3}{k_{1,2}(n)\choose i_4}{k_{2,2}(n)\choose i_5}\P(B_I|A_n, \cap_{i,j}\{M_{i,j}(n)=k_{i,j}(n)\}),\label{eq239804jkl}
\end{align}
where $B_I$ is the event that a set with $i_1,i_2,i_3,i_4,i_5$ realized reactions from $E^{0,1}_n$,  $E^{1,1}_n$, $E^{0,2}_n$, $E^{1,2}_n$, $E^{2,2}_n$, which also satisfy $A_n$, is minimally dependent. Note that the constraint $i_1\leq 2i_3+3i_4+4i_5$ comes from Lemma \ref{lemma:condition}.

Now we fix an index $I=(i_1,i_2,i_3,i_4,i_5)$ and we fix a particular minimally dependent reaction set $V_I$ with $i_1,i_2,i_3,i_4,i_5$ reactions in $E^{0,1}_n,E^{1,1}_n,E^{0,2}_n,E^{1,2}_n,E^{2,2}_n$. Let $M_I$ be the matrix whose columns are  reaction vectors in $V_I$.  Next, we notice that the total number of non-zero entries in $M_I$ is $i_1+2i_2+2i_3+3i_4+4i_5$. Since each non-zero row in $M_I$ must have at least two non-zero entries, the number of non-zero rows is at most $\ell:=\lfloor \frac{i_1+2i_2+2i_3+3i_4+4i_5}{2}\rfloor$. 

There are ${n\choose \ell}$ ways to choose $\ell$ non-zero rows from $n$ rows. Fix a set of $\ell$ rows to be non-zero rows. We have the probability that all $i_1$ reaction vectors in $E^{0,1}_n$ have non-zero entry among these $\ell$ rows is 
\[
\frac{\ell}{n}\frac{\ell-1}{n-1}\cdots\frac{\ell-i_1+1}{n-i_1+1}\leq \bigg(\frac{\ell}{n}\bigg)^{i_1}.
\]
The probability that all $i_2$ reactions vectors in $E^{1,1}_n$ have non-zero entry among these $\ell$ rows is
\[
\frac{{\ell\choose 2}}{{n\choose 2}}\frac{{\ell\choose 2}-1}{{n\choose 2}-1}\cdots\frac{{\ell\choose 2}-i_2+1}{{n\choose 2}-i_2+1}\leq \bigg(\frac{\ell}{n}\bigg)^{2i_2}.
\]
Using similar arguments, we have 
\begin{align}
\P(B_I|A_n, \cap_{i,j}\{M_{i,j}(n)=k_{i,j}(n)\})&\leq  {n\choose \ell}\bigg(\frac{\ell}{n}\bigg)^{i_1+2i_2+2i_3+3i_4+4i_5}\nonumber \\
&\leq \frac{n^{\ell}}{\ell!}\bigg(\frac{\ell}{n}\bigg)^{\ell+\frac{i_1+2i_2+2i_3+3i_4+4i_5}{2}} \nonumber \\
&\leq \frac{n^\ell}{\ell^\ell e^{-\ell}}\bigg(\frac{\ell}{n}\bigg)^{\ell+\frac{i_1+2i_2+2i_3+3i_4+4i_5}{2}} \nonumber\\
&\leq \bigg(\frac{e\ell}{n}\bigg)^{\frac{i_1+2i_2+2i_3+3i_4+4i_5}{2}}, \label{eq89024klnjk}
\end{align}
where the third inequality is due to the inequality $x!\geq x^x e^{-x}$. Combining \eqref{eq239804jkl} and \eqref{eq89024klnjk}, we have
\begin{align*}
&\P(\delta_{R_n}>0|A_n, \cap_{i,j}\{M_{i,j}(n)=k_{i,j}(n)\}) \\
&\leq \sum_{\substack{I\leq K_n\\i_3+i_4+i_5>0\\i_1\leq 2i_3+3i_4+4i_5}}{k_{0,1}(n)\choose i_1}{k_{1,1}(n)\choose i_2}\cdots{k_{2,2}(n)\choose i_5}\bigg(\frac{e\ell}{n}\bigg)^{\frac{i_1+2i_2+2i_3+3i_4+4i_5}{2}} \\
&\leq \sum_{\substack{I\leq K_n\\i_3+i_4+i_5>0\\i_1\leq 2i_3+3i_4+4i_5}}\bigg(\frac{ek_{0,1}(n)}{i_1}\bigg)^{i_1}\cdots\bigg(\frac{ek_{2,2}(n)}{i_5}\bigg)^{i_5}\bigg(\frac{e\ell}{n}\bigg)^{\frac{i_1+2i_2+2i_3+3i_4+4i_5}{2}} \\
&\leq \sum_{\substack{I\leq K_n\\i_3+i_4+i_5>0\\i_1\leq 2i_3+3i_4+4i_5}}\bigg(\frac{(5ek_{0,1}(n))^{i_1}\cdots(5ek_{2,2}(n))^{i_5}}{(i_1+i_2+i_3+i_4+i_5)^{i_1+i_2+i_3+i_4+i_5}}\bigg)\bigg(\frac{e\ell}{n}\bigg)^{\frac{i_1+2i_2+2i_3+3i_4+4i_5}{2}} \\
&\leq \sum_{\substack{I\leq K_n\\i_3+i_4+i_5>0\\i_1\leq 2i_3+3i_4+4i_5}}\bigg(\frac{(5eQ_{0,1}(n))^{i_1}\cdots(5eQ_{2,2}(n))^{i_5}}{(i_1+i_2+i_3+i_4+i_5)^{i_1+i_2+i_3+i_4+i_5}}\bigg)\bigg(\frac{e\ell}{n}\bigg)^{\frac{i_1+2i_2+2i_3+3i_4+4i_5}{2}},
\end{align*}
where the second inequality is again due to $x!\geq x^xe^{-x}$ and the third inequality is due to Corollary \ref{cor:inequality}. Since $\ell=\lfloor\frac{i_1+2i_2+2i_3+3i_4+4i_5}{2}\rfloor\leq 2(i_1+i_2+i_3+i_4+i_5)$, we have
\begin{align}
&\P(\delta_{R_n}>0|A_n, \cap_{i,j}\{M_{i,j}(n)=k_{i,j}(n)\}) \nonumber\\ 
&\leq \sum_{\substack{I\leq K_n\\i_3+i_4+i_5>0\\i_1\leq 2i_3+3i_4+4i_5}}\frac{(5eQ_{0,1}(n))^{i_1}\cdots(5eQ_{2,2}(n))^{i_5}(i_1+i_2+i_3+i_4+i_5)^{\ell-(i_1+i_2+i_3+i_4+i_5)}}{((2e)^{-1}n)^{\frac{i_1+2i_2+2i_3+3i_4+4i_5}{2}}}\nonumber \\
&\leq \sum_{\substack{I\leq K_n\\i_3+i_4+i_5>0\\i_1\leq 2i_3+3i_4+4i_5}}\frac{(5eQ_{0,1}(n))^{i_1}\cdots(5eQ_{2,2}(n))^{i_5}(i_1+i_2+i_3+i_4+i_5)^{-\frac{i_1}{2}+\frac{i_4}{2}+i_5}}{((2e)^{-1}n)^{\frac{i_1+2i_2+2i_3+3i_4+4i_5}{2}}}\nonumber\\
&= S_n+T_n,\label{eq234234234234234}
\end{align}
where
\[
S_n=\sum_{\substack{I\leq K_n\\i_3+i_4+i_5>0\\i_1\leq 2i_3+3i_4+4i_5\\i_1\leq i_4+2i_5}}\frac{(5eQ_{0,1}(n))^{i_1}\cdots(5eQ_{2,2}(n))^{i_5}(i_1+i_2+i_3+i_4+i_5)^{-\frac{i_1}{2}+\frac{i_4}{2}+i_5}}{((2e)^{-1}n)^{\frac{i_1+2i_2+2i_3+3i_4+4i_5}{2}}},
\]
consists of the terms with positive exponent for $i_1+i_2+i_3+i_4+i_5$ and
\[
T_n=\sum_{\substack{I\leq K_n\\i_3+i_4+i_5>0\\i_1\leq 2i_3+3i_4+4i_5\\i_1> i_4+2i_5}}\frac{(5eQ_{0,1}(n))^{i_1}\cdots(5eQ_{2,2}(n))^{i_5}(i_1+i_2+i_3+i_4+i_5)^{-\frac{i_1}{2}+\frac{i_4}{2}+i_5}}{((2e)^{-1}n)^{\frac{i_1+2i_2+2i_3+3i_4+4i_5}{2}}}
\]
consists of the terms with negative exponent for $i_1+i_2+i_3+i_4+i_5$.

We first deal with $T_n$, which is the more difficult term to bound. Notice that the exponent $-\frac{i_1}{2}+\frac{i_4}{2}+i_5<0$. Therefore we have
\begin{align}
    T_n &\leq \sum_{\substack{I\leq K_n\\i_3+i_4+i_5>0\\i_1\leq 2i_3+3i_4+4i_5\\i_1> i_4+2i_5}}\frac{(5eQ_{0,1}(n))^{i_1}\cdots(5eQ_{2,2}(n))^{i_5}}{((2e)^{-1}n)^{\frac{i_1+2i_2+2i_3+3i_4+4i_5}{2}}}\nonumber\\
    &= \sum_{\substack{I\leq K_n\\i_3+i_4+i_5>0\\i_1\leq 2i_3+3i_4+4i_5\\i_1> i_4+2i_5}}\frac{Q_{0,1}(n)^{i_1}\cdots Q_{2,2}(n)^{i_5}(5e)^{i_1+i_2+i_3+i_4+i_5}}{((2e)^{-1}n)^{\frac{i_1+2i_2+2i_3+3i_4+4i_5}{2}}}\nonumber\\
    &\leq \sum_{\substack{I\leq K_n\\i_3+i_4+i_5>0\\i_1\leq 2i_3+3i_4+4i_5\\i_1> i_4+2i_5}}\frac{Q_{0,1}(n)^{i_1}\cdots Q_{2,2}(n)^{i_5}}{((50e^3)^{-1}n)^{\frac{i_1+2i_2+2i_3+3i_4+4i_5}{2}}}, \label{eq391284oi}
\end{align}
where the last inequality is due to the fact that $i_1+i_2+i_3+i_4+i_5 \leq 2\frac{i_1+2i_2+2i_3+3i_4+4i_5}{2}$. 
Let 
\begin{align}
Q(n)=\max\{Q_{i,j}(n), Q_{0,1}(n)Q_{0,2}(n)^{1/2}, Q_{0,1}(n)Q_{1,2}(n)^{1/3}, Q_{0,1}(n)Q_{2,2}(n)^{1/4} \},
\end{align}
where the maximum is interpreted asymptotically as $n\to\infty$. From the way we construct $Q_{i,j}(n)$ in Lemma \ref{lemma:buffer} we have $Q(n)\ll n$. Next, we split $Q_{0,1}(n)^{i_1}$ into the product of three terms and distribute them into $Q_{0,2}(n),Q_{1,2}(n)$, and $Q_{2,2}(n)$. We have 
\[
Q_{0,1}(n)^{i_1\frac{2i_3}{2i_3+3i_4+4i_5}}Q_{0,2}(n)^{\frac{i_1}{2}\frac{2i_3}{2i_3+3i_4+4i_5}}\leq Q(n)^{i_1\frac{2i_3}{2i_3+3i_4+4i_5}},
\]
\[
Q_{0,1}(n)^{i_1\frac{3i_4}{2i_3+3i_4+4i_5}}Q_{1,2}(n)^{\frac{i_1}{3}\frac{3i_4}{2i_3+3i_4+4i_5}}\leq Q(n)^{i_1\frac{3i_4}{2i_3+3i_4+4i_5}},
\]
and
\[
Q_{0,1}(n)^{i_1\frac{4i_5}{2i_3+3i_4+4i_5}}Q_{2,2}(n)^{\frac{i_1}{4}\frac{4i_5}{2i_3+3i_4+4i_5}}\leq Q(n)^{i_1\frac{4i_5}{2i_3+3i_4+4i_5}}.
\]
Multiplying these inequalities together, we have
\[
Q_{0,1}(n)^{i_1}Q_{0,2}(n)^{i_1\frac{i_3}{2i_3+3i_4+4i_5}}Q_{1,2}(n)^{i_1\frac{i_4}{2i_3+3i_4+4i_5}}Q_{2,2}(n)^{i_1\frac{i_5}{2i_3+3i_4+4i_5}} \leq Q(n)^{i_1}.
\]
Note that in \eqref{eq391284oi}, $Q_{0,2}(n)$ has an exponent of $i_3$. Notice further that $i_1\frac{i_3}{2i_3+3i_4+4i_5} \leq i_3$, since $i_1\leq 2i_3+3i_4+4i_5$. Thus we have
\[
Q_{0,2}(n)^{i_3-i_1\frac{i_3}{2i_3+3i_4+4i_5}} \leq Q(n)^{i_3-i_1\frac{i_3}{2i_3+3i_4+4i_5}}.
\]
Similarly, we have
\[
Q_{1,2}(n)^{i_4-i_1\frac{i_4}{2i_3+3i_4+4i_5}} \leq Q(n)^{i_4-i_1\frac{i_4}{2i_3+3i_4+4i_5}}
\]
and
\[
Q_{2,2}(n)^{i_5-i_1\frac{i_5}{2i_3+3i_4+4i_5}} \leq Q(n)^{i_5-i_1\frac{i_5}{2i_3+3i_4+4i_5}}.
\]
Therefore we have 
\begin{align}
Q_{0,1}(n)^{i_1}\cdots Q_{2,2}(n)^{i_5} &\leq Q(n)^{i_1+i_2}Q(n)^{i_3+i_4+i_5-i_1\frac{i_3+i_4+i_5}{2i_3+3i_4+4i_5}}\nonumber\\
& = Q(n)^{i_1+i_2+i_3+i_4+i_5-i_1\frac{i_3+i_4+i_5}{2i_3+3i_4+4i_5}}.\label{e189234njkjo}
\end{align}
Note that $i_1\leq 2i_3+3i_4+4i_5$, thus
\begin{align}
i_1+i_2+i_3+i_4+i_5-i_1\frac{i_3+i_4+i_5}{2i_3+3i_4+4i_5}\leq \frac{i_1+2i_2+2i_3+3i_4+4i_5}{2}, \label{eq1ji2h43iuuoi}
\end{align}
where the inequality above can be verified in a straightforward manner. Combining \eqref{eq391284oi},\eqref{e189234njkjo}, and \eqref{eq1ji2h43iuuoi}, and noting that $i_3+i_4+i_5>0$, we have 
\begin{align}
T_n &\leq \sum_{\substack{I\leq K_n\\i_3+i_4+i_5>0\\i_1\leq 2i_3+3i_4+4i_5\\i_1> i_4+2i_5}} \bigg(\frac{Q(n)}{(50e^3)^{-1}n}\bigg)^{\frac{i_1+2i_2+2i_3+3i_4+4i_5}{2}}\nonumber \\
&\leq \frac{Q(n)}{(50e^3)^{-1}n}\sum_{i_1=0}^\infty \bigg(\frac{Q(n)}{(50e^3)^{-1}n}\bigg)^{i_1/2} \cdots\sum_{i_5=0}^\infty \bigg(\frac{Q(n)}{(50e^3)^{-1}n}\bigg)^{2i_5}\nonumber \\
&\leq C_1\frac{Q(n)}{n}\label{eqo23u49},
\end{align}
where the second inequality is due to the fact that $i_3+i_4+i_5>0$. Since each sum on the right hand side is bounded by $2$ for $n$ large enough, the constant $C_1$ is independent from $n$ and $k_{i,j}(n)$.

Next we consider $S_n$. Recall that $i_1\leq k_{0,1}(n) \leq Q_{0,1}(n), \dots, i_5\leq k_{2,2}(n)\leq Q_{2,2}(n)$, implying $i_1,\dots,i_5\leq Q(n)$. Therefore we have
\begin{align}
S_n &\leq \sum_{\substack{I\leq K_n\\i_3+i_4+i_5>0\\i_1\leq 2i_3+3i_4+4i_5\\i_1\leq i_4+2i_5}}\frac{(5eQ(n))^{i_1+i_2+i_3+i_4+i_5-\frac{i_1}{2}+\frac{i_4}{2}+i_5}}{((2e)^{-1}n)^{\frac{i_1+2i_2+2i_3+3i_4+4i_5}{2}}} \nonumber\\
&=\sum_{\substack{I\leq K_n\\i_3+i_4+i_5>0\\i_1\leq 2i_3+3i_4+4i_5\\i_1\leq i_4+2i_5}}\frac{(5eQ(n))^{\frac{i_1+2i_2+2i_3+3i_4+4i_5}{2}}}{((2e)^{-1}n)^{\frac{i_1+2i_2+2i_3+3i_4+4i_5}{2}}} \nonumber\\
&\leq \frac{5eQ(n)}{(2e)^{-1}n}\sum_{i_1=0}^\infty \bigg(\frac{5eQ(n)}{(2e)^{-1}n}\bigg)^{i_1/2} \cdots\sum_{i_5=0}^\infty \bigg(\frac{5eQ(n)}{(2e)^{-1}n}\bigg)^{2i_5}\nonumber\\
&\leq C_2\frac{Q(n)}{n},\label{eq123krwe}
\end{align}
where $C_2$ is independent from $n$ and $k_{i,j}(n)$. From \eqref{eq234234234234234}, \eqref{eqo23u49}, \eqref{eq123krwe}, we have
\begin{align}\label{eq23894jhi234}
\P(\delta_{R_n}>0|A_n, \cap_{i,j}\{M_{i,j}(n)=k_{i,j}(n)\}) \leq C_1\frac{Q(n)}{n}+C_2\frac{Q(n)}{n}.
\end{align}
From Lemma \ref{lemma:species} and the fact that $Q_{i,j}(n)\leq Q(n)$, we have
\begin{align}\label{eq9234njjk23j423o23C}
\P(A_n| \cap_{i,j}\{M_{i,j}(n)=k_{i,j}(n)\}) \geq \bigg(1-\frac{2Q(n)}{n}\bigg) \bigg(1-\frac{4Q(n)}{n}\bigg) \bigg(1-\frac{8Q(n)}{n} \bigg).
\end{align}
Plugging \eqref{eq23894jhi234} and \eqref{eq9234njjk23j423o23C} into \eqref{eq234jl23j4l234} yields
\begin{align}
\P(&\delta_{R_n}=0|\cap_{i,j}\{M_{i,j}(n)=k_{i,j}(n)\})\nonumber\\
&\geq \bigg(1-C_1\frac{Q(n)}{n}-C_2\frac{Q(n)}{n}\bigg) \bigg(1-\frac{2Q(n)}{n}\bigg) \bigg(1-\frac{4Q(n)}{n}\bigg) \bigg(1-\frac{8Q(n)}{n} \bigg)\nonumber\\
&\geq 1- C_3\frac{Q(n)}{n},
\end{align}
where the last inequality is obtained from repeatedly applying $(1-a)(1-b)\geq 1-a-b$ (where $a,b\geq 0$). Clearly we must have $C_3$ independent from $n$ and $k_{i,j}(n)$.
\end{proof}

\section{The threshold function for deficiency zero}\label{sec6}
In this section, we  provide an algorithm to find the threshold function $r(n)$ for deficiency zero for a given set of $\{\alpha_{i,j}\}$.  Specifically, $r(n)$ will satisfy
\begin{enumerate}
    \item $\lim_{n\to\infty}\P(\delta_{R_n}=0)= 0$ for $\lim_{n\to\infty}\frac{p_n}{r(n)}=\infty$, and
    \item $\lim_{n\to\infty}\P(\delta_{R_n}=0)= 1$ for $\lim_{n\to\infty}\frac{p_n}{r(n)}=0.$
\end{enumerate}

From Remark 5, we have $K_{i,j}(n)\sim n^{i+j}n^{\alpha_{i,j}}p_n = n^{i+j +\alpha_{i,j}}p_n$.  Moreover, from Section \ref{sec5}, we have  sets of conditions on the $K_{i,j}(n)$ that determine when a network does or does not have a deficiency of zero.  Combining these yields the following theorem.  In the theorem below, note that the equations 1-3 correspond to condition (C1.1) (and (C2.1)), the equations 4-7 correspond to condition (C1.2) (and (C2.2)), and the equations 8-10 correspond to condition (C1.3) (and (C2.3)).

\begin{theorem}\label{theorem:pn}
Given a set of parameters $\{\alpha_{i,j}\}$, consider the following systems where we solve for $\{r_i(n)\}$
\begin{enumerate}
    \item $n^{2+\alpha_{0,2}} r_1(n) = n$.
    \item $n^{3+\alpha_{1,2}} r_2(n) = n$.
    \item $n^4r_3(n)= n$.
    \item $n^{2+\alpha_{1,1}} r_4(n) = n$.
    \item  $n^{2+\alpha_{0,2}} r_5(n)= 1$.
    \item  $n^{3+\alpha_{1,2}} r_6(n)= 1$.
    \item  $n^4r_7(n) = 1$.
    \item $n^{4+2\alpha_{0,1}+\alpha_{0,2}}r_8(n)^3 = n^2$.
    \item $n^{6+3\alpha_{0,1}+\alpha_{1,2}}r_9(n)^4 = n^3$.
    \item $n^{8+4\alpha_{0,1}}r_{10}(n)^5 = n^4$.
\end{enumerate}
Then the threshold function is 
\[r(n)=\min\{r_1(n),r_2(n),r_3(n),\max\{r_4(n),\min\{r_5(n),r_6(n),r_7(n)\}\},r_8(n),r_9(n),r_{10}(n)\},
\]
where the maximum and minimum are interpreted asymptotically as $n\to\infty$. 
\end{theorem}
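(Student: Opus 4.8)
The plan is to reduce everything to Theorems \ref{main1} and \ref{main2} by translating each hypothesis on the expected edge counts $K_{i,j}(n)$ into an equivalent comparison between $p_n$ and one of the ten functions $r_i(n)$. The starting point is the asymptotic relation $K_{i,j}(n)\sim n^{i+j+\alpha_{i,j}}p_n$ (with $K_{2,2}(n)\sim n^4 p_n$): each of the ten defining equations in the statement is exactly what one obtains by replacing $K_{i,j}(n)$ by its leading term and setting the relevant condition to equality. For example, $K_{0,2}(n)\gg n$ is equivalent to $n^{2+\alpha_{0,2}}p_n\gg n$, i.e. to $p_n\gg r_1(n)$, and $K_{0,1}(n)^2K_{0,2}(n)\gg n^2$ is equivalent (cubing is monotone) to $p_n\gg r_8(n)$; the multiplicative constants coming from $|E^{0,2}_n|=n(n+1)/2$ and the like are irrelevant under $\gg$ and $\ll$. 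Carrying this out for all ten conditions shows that, up to constants, each $r_i(n)$ is a fixed power of $n$ determined by the $\alpha_{i,j}$, so the nested minima and maxima defining $r(n)$ are well defined asymptotically (one simply compares exponents).

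The second ingredient is a pair of elementary logical equivalences, valid because the $r_i(n)$ are, up to constants, powers of $n$ and hence totally ordered in the $\gg/\ll$ sense: first, $p_n\gg a_n$ or $p_n\gg b_n$ iff $p_n\gg\min\{a_n,b_n\}$, while $p_n\gg a_n$ and $p_n\gg b_n$ iff $p_n\gg\max\{a_n,b_n\}$; second, the dual statements for $\ll$, with the roles of $\min$ and $\max$ exchanged. The $\ll$ direction is immediate from $\min\{a_n,b_n\}\le a_n,b_n\le\max\{a_n,b_n\}$ pointwise, whereas the $\gg$ direction for the minimum uses that the pointwise minimum of finitely many powers of $n$ is asymptotically the single term of smallest exponent.

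With these in hand the two directions unwind directly. Write $D(n)=\max\{r_4(n),\min\{r_5(n),r_6(n),r_7(n)\}\}$, so that $r(n)=\min\{r_1,r_2,r_3,D,r_8,r_9,r_{10}\}$. For the $\to 0$ direction, assuming $p_n\gg r(n)$ forces $p_n$ to dominate whichever term realizes the outer minimum: if that term is one of $r_1,r_2,r_3$ then condition (C1.1) holds; if it is $D$ then $p_n\gg r_4$ and $p_n\gg\min\{r_5,r_6,r_7\}$, which is exactly (C1.2); and if it is one of $r_8,r_9,r_{10}$ then (C1.3) holds. In every case a hypothesis of Theorem \ref{main1} is met, giving $\lim_{n\to\infty}\P(\delta_{G_n}=0)=0$. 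Symmetrically, $p_n\ll r(n)$ implies $p_n\ll$ every argument of the outer minimum, since each argument dominates the minimum; the arguments $r_1,r_2,r_3$ then yield (C2.1), the arguments $r_8,r_9,r_{10}$ yield (C2.3), and $p_n\ll D$ gives $p_n\ll r_4$ or $p_n\ll\min\{r_5,r_6,r_7\}$, i.e. (C2.2.1) or (C2.2.2), hence (C2.2). Theorem \ref{main2} then yields $\lim_{n\to\infty}\P(\delta_{G_n}=0)=1$.

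The only genuinely delicate point, and the part I expect to require the most care, is the bookkeeping that matches the Boolean structure of Theorems \ref{main1} and \ref{main2} (disjunctions becoming minima, conjunctions becoming maxima, with the pattern reversed between the $\to 0$ and $\to 1$ regimes) to the nested $\min$/$\max$ in the definition of $r(n)$, together with the justification that these operations are meaningful only asymptotically. This is precisely where the hypotheses $p_n\gg r(n)$ and $p_n\ll r(n)$, which keep $p_n$ strictly away from the threshold, are needed: they rule out the boundary case $p_n\sim r(n)$ and make the equivalence between ``$\not\gg$'' and ``$\ll$'' for each individual $r_i(n)$ hold, so that conditions (C2.1)--(C2.3) behave as the exact asymptotic converse of (C1.1)--(C1.3).
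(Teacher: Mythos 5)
Your proposal is correct and follows exactly the route the paper intends: the paper's own proof is just the two-sentence assertion that $p_n\gg r(n)$ forces one of (C1.1)--(C1.3) and $p_n\ll r(n)$ forces all of (C2.1)--(C2.3), and your write-up supplies precisely the missing bookkeeping (the translation $K_{i,j}(n)\sim n^{i+j+\alpha_{i,j}}p_n$ and the disjunction-as-$\min$ / conjunction-as-$\max$ correspondence, reversed between the two regimes). The only quibble is the closing remark about $p_n\gg r(n)$ ``making $\not\gg$ equivalent to $\ll$'': that equivalence is neither true nor needed, since your actual argument verifies the (C1.x) or (C2.x) conditions directly rather than by negation.
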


\begin{proof}
If $\lim_{n\to\infty}\frac{p_n}{r(n)}=\infty$, then it is easy to show that at least one condition in Theorem \ref{main1} is satisfied. Similarly, if $\lim_{n\to\infty}\frac{p_n}{r(n)}=0$, then all conditions in Theorem \ref{main2} are satisfied.
\end{proof}

\begin{example}[A closed system with $\alpha_{0,1}=\alpha_{0,2}=0, \alpha_{1,1}=2, \alpha_{1,2}=1$]
In this case, we have $K_{0,1}(n)\sim np_n$, $K_{0,2}(n)\sim n^2p_n$, $K_{1,1}(n)\sim K_{1,2}(n)\sim K_{2,2}(n)\sim n^4p_n$. Using Theorem \ref{theorem:pn} yields
\[
r(n)=\frac{1}{n^3},
\]
which is the same threshold as in the base case in \cite{prevalence}. \hfill $\triangle$
\end{example}

\begin{example}[An open system with $\alpha_{0,1}=3$, $\alpha_{1,1}=\alpha_{0,2}=2$, $\alpha_{1,2}=1$]. In this case, we have $K_{i,j}(n)\sim n^4p_n$ for all $(i,j)$. Using Theorem \ref{theorem:pn} yields
\[
r(n)=\frac{1}{n^{10/3}},
\]
which is a lower threshold than the previous case with a closed system. Intuitively, the inflow and outflow reactions make it easier to break deficiency zero of a reaction network. \hfill $\triangle$
\end{example}

\vspace{.15in}

\noindent {\Large \textbf{Acknowledgements}}

\vspace{.15in}

\noindent We gratefully acknowledge support  via the Army Research Office through grant W911NF-18-1-0324, and via the William F. Vilas Trust Estate. 

\appendix
\section{Appendix}
The following lemmas have been used in the manuscript.  Their proofs are added for completenes.
\begin{lemma}\label{lemma:binomial}
Let $X\sim B(n,p)$. Then we have
\[
\E\bigg[ \frac{1}{X+1}\bigg] \leq \frac{1}{np}, 
\quad \text{and} \quad 
\E\bigg[ \frac{1}{(X+1)(X+2)}\bigg] \leq \frac{1}{(np)^2}.
\]
\end{lemma}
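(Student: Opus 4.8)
The plan is to prove both bounds by direct computation against the binomial probability mass function, exploiting the elementary combinatorial identity that shifting a binomial coefficient up by one index absorbs the factor $1/(k+1)$. Concretely, for the first inequality I would write
\[
\E\left[\frac{1}{X+1}\right] = \sum_{k=0}^n \frac{1}{k+1}\binom{n}{k}p^k(1-p)^{n-k},
\]
and then apply the identity $\frac{1}{k+1}\binom{n}{k} = \frac{1}{n+1}\binom{n+1}{k+1}$, which is immediate from the factorial definitions. Substituting this in and re-indexing with $j=k+1$ turns the sum into $\frac{1}{(n+1)p}\sum_{j=1}^{n+1}\binom{n+1}{j}p^j(1-p)^{n+1-j}$, which is $\frac{1}{(n+1)p}\left(1-(1-p)^{n+1}\right)$. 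Since the factor $1-(1-p)^{n+1}$ lies in $[0,1]$ and $\frac{1}{n+1}\le \frac1n$, this yields the claimed bound $\E[1/(X+1)]\le \frac{1}{np}$.

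For the second inequality I would follow the exact same strategy with the analogous two-step shift identity $\frac{1}{(k+1)(k+2)}\binom{n}{k} = \frac{1}{(n+1)(n+2)}\binom{n+2}{k+2}$, again verifiable directly from factorials. Inserting this and re-indexing with $j=k+2$ gives
\[
\E\left[\frac{1}{(X+1)(X+2)}\right] = \frac{1}{(n+1)(n+2)p^2}\sum_{j=2}^{n+2}\binom{n+2}{j}p^j(1-p)^{n+2-j}.
\]
The remaining sum is the tail of a full binomial expansion and is therefore at most $1$, so the whole expression is bounded by $\frac{1}{(n+1)(n+2)p^2}$. Using $(n+1)(n+2)\ge n^2$ then gives the desired $\frac{1}{(np)^2}$.

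There is no serious obstacle here; the only thing to get right is the bookkeeping on the coefficient-shift identities and the index ranges after substitution, together with noting that the truncated binomial sums are nonnegative and bounded above by $1$. One minor care point is the edge case $p=0$ (or $n=0$), where the bounds are interpreted in the limiting sense, but for the application in the manuscript the parameters are strictly positive so this need not be belabored.
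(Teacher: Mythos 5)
Your proposal is correct and follows essentially the same argument as the paper: both use the coefficient-shift identities $\frac{1}{k+1}\binom{n}{k}=\frac{1}{n+1}\binom{n+1}{k+1}$ and $\frac{1}{(k+1)(k+2)}\binom{n}{k}=\frac{1}{(n+1)(n+2)}\binom{n+2}{k+2}$, bound the resulting truncated binomial sums by $1$, and finish with $n+1\geq n$ and $(n+1)(n+2)\geq n^2$. No meaningful differences.
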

\begin{proof}
We have
\begin{align*}
\E\bigg[\frac{1}{X+1}\bigg] &= \sum_{i=0}^n \frac{1}{i+1}{n\choose i}p^i(1-p)^{n-i} \nonumber =\sum_{i=0}^n  \frac{n!}{(i+1)!(n-i)!}p^i(1-p)^{n-i} \nonumber\\
&=\frac{1}{n+1}\frac{1}{p}\sum_{i=0}^n {n+1\choose i+1}p^{i+1}(1-p)^{n-i} \leq \frac{1}{np}(p+1-p)^{n+1} \leq \frac{1}{np}.
\end{align*}
Similarly, we have
\begin{align*}
\E\bigg[\frac{1}{(X+1)(X+2)}\bigg] &= \sum_{i=0}^n \frac{1}{(i+1)(i+2)}{n\choose i}p^i(1-p)^{n-i} =\sum_{i=0}^n  \frac{n!}{(i+2)!(n-i)!}p^i(1-p)^{n-i} \nonumber\\
&=\frac{1}{(n+1)(n+2)}\frac{1}{p^2}\sum_{i=0}^n {n+2\choose i+2}p^{i+2}(1-p)^{n-i} \nonumber\\
&\leq \frac{1}{(np)^2}(p+1-p)^{n+2} \leq \frac{1}{(np)^2}.
\end{align*}
\end{proof}
\begin{lemma}\label{lemma:inequality}
Let $x,y\in\R_{\geq 0}$. Then we have
\[
(2x)^x(2y)^y \geq (x+y)^{x+y}
\]
\end{lemma}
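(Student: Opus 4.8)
The plan is to take logarithms and recognize the resulting inequality as nothing more than the convexity of the function $\phi(u)=u\ln u$ on $(0,\infty)$. This turns a statement about products of powers into a single, standard convexity fact, so almost all of the work is bookkeeping.

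First I would dispose of the boundary cases. If $x=0$ (and symmetrically if $y=0$), then with the usual convention $0^0=1$ the claim reduces to $(2y)^y\ge y^y$, i.e. $2^y\ge 1$, which holds for every $y\ge 0$. Hence I may assume $x,y>0$ and take logarithms of both sides; since $t\mapsto e^t$ is increasing, the desired inequality $(2x)^x(2y)^y\ge (x+y)^{x+y}$ is equivalent to
\[
x\ln(2x)+y\ln(2y)\ge (x+y)\ln(x+y).
\]
Writing $x\ln(2x)=x\ln 2+\phi(x)$ and $y\ln(2y)=y\ln 2+\phi(y)$, and noting that $(x+y)\ln(x+y)=\phi(x+y)$, the target rearranges to $\phi(x)+\phi(y)\ge \phi(x+y)-(x+y)\ln 2$.

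The key step is to observe that the right-hand side equals $2\phi\!\left(\tfrac{x+y}{2}\right)$, since $2\phi\!\left(\tfrac{x+y}{2}\right)=(x+y)\ln\tfrac{x+y}{2}=\phi(x+y)-(x+y)\ln 2$. Thus the whole statement collapses to the midpoint-convexity inequality $\phi(x)+\phi(y)\ge 2\phi\!\left(\tfrac{x+y}{2}\right)$, which follows immediately from $\phi''(u)=1/u>0$ on $(0,\infty)$ (equivalently, from Jensen's inequality applied to the convex function $\phi$). Exponentiating then recovers the original inequality.

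I do not anticipate a serious obstacle. The only points requiring minor care are the continuity/convention at the boundary $x=0$ or $y=0$, handled above, and the algebraic identity $2\phi\!\left(\tfrac{x+y}{2}\right)=\phi(x+y)-(x+y)\ln 2$ that converts the problem into a pure convexity statement. Once that identity is in place, the inequality is an immediate consequence of the convexity of $u\mapsto u\ln u$, so the bulk of the write-up is simply verifying this reduction cleanly.
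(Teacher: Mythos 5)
Your proof is correct, but it takes a genuinely different route from the paper. The paper homogenizes the inequality by dividing through by $y^{x+y}$, reduces to the single-variable statement $2^{1+t}t^t\ge(1+t)^{1+t}$ with $t=x/y$, and then minimizes $f(t)=(1+t)\ln 2+t\ln t-(1+t)\ln(1+t)$ by elementary calculus, checking that $f'(t)=\ln(2t)-\ln(1+t)$ vanishes only at $t=1$ where $f(1)=0$. You instead identify the logarithmic form of the inequality as exactly the midpoint-convexity statement $\phi(x)+\phi(y)\ge 2\phi\bigl(\tfrac{x+y}{2}\bigr)$ for $\phi(u)=u\ln u$, which follows from $\phi''(u)=1/u>0$. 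The underlying fact is the same (the paper's $f$ attains its minimum at $t=1$ precisely because $\phi$ is convex), but your packaging is more conceptual and has a concrete payoff: Jensen's inequality applied to $\phi$ with $n$ points gives $\sum_i\phi(x_i)\ge n\,\phi\bigl(\tfrac1n\sum_i x_i\bigr)$, which after exponentiating is exactly Corollary \ref{cor:inequality}; this would replace the paper's somewhat delicate forward--backward induction (proving the cases $n=2k$ and $n=k-1$ from $n=k$) with a one-line argument. The paper's calculus route, by contrast, is fully self-contained and avoids invoking Jensen. Do make the convention $0^0=1$ explicit in the boundary case, as you did.
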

\begin{proof}
Clearly the inequality holds when either $x=0$ or $y=0$ or both. Suppose $x>0$ and $y>0$. We have
\begin{align*}
(2x)^x(2y)^y \geq (x+y)^{x+y}
\iff 2^{x+y}\bigg(\frac{x}{y}\bigg)^x\geq \bigg(1+\frac{x}{y}\bigg)^{x+y} \iff 2^{1+\frac{x}{y}}\bigg(\frac{x}{y}\bigg)^{x/y}\geq \bigg(1+\frac{x}{y}\bigg)^{1+x/y}.
\end{align*}
Thus the inequality holds if we have $2^{1+t}t^t \geq (1+t)^{1+t}$, or $(1+t)\ln(2) + t\ln(t) \geq (1+t)\ln(1+t)$ for $t>0$. Let 
\[
f(t)=(1+t)\ln(2) + t\ln(t) - (1+t)\ln(1+t).
\]
A quick calculation shows $f'(t)=\ln(2t)-\ln(1+t)$, and $f(t)$ has a global minimum at $t=1$. Thus $f(t)\geq f(1)=0$, which concludes the proof of the Lemma.
\end{proof}
\begin{corollary}\label{cor:inequality}
Let $x_1,x_2,\dots,x_n\in \R_{\geq 0}$, then we have
\begin{align}\label{eq234kl09}
\prod_{i=1}^n(nx_i)^{x_i} \geq \bigg(\sum_{i=1}^n x_i\bigg)^{\sum_{i=1}^n x_i}.
\end{align}
\end{corollary}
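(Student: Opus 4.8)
The plan is to prove the statement by a Cauchy-style forward--backward induction on $n$, using Lemma \ref{lemma:inequality} (the case $n=2$) as the engine. Writing $S=\sum_{i=1}^n x_i$ and adopting the conventions $0^0=1$ and $0\ln 0=0$, the inequality \eqref{eq234kl09} reads, after taking logarithms,
\[
\sum_{i=1}^n x_i\ln x_i \;\ge\; S\ln\frac{S}{n},
\]
so that any term with $x_i=0$ contributes nothing and may be ignored throughout. The base cases $n=1$ (equality) and $n=2$ (exactly Lemma \ref{lemma:inequality}) are immediate.

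For the forward (doubling) step I would show that the statement for $m$ implies the statement for $2m$. Given $x_1,\dots,x_{2m}\ge 0$, set $A=\sum_{i=1}^m x_i$ and $B=\sum_{i=m+1}^{2m}x_i$. Applying the inductive hypothesis separately to the first $m$ and the last $m$ variables and dividing out the factors of $m^{A}$ and $m^{B}$ yields
\[
\prod_{i=1}^{m}x_i^{x_i}\ge A^{A}m^{-A},\qquad \prod_{i=m+1}^{2m}x_i^{x_i}\ge B^{B}m^{-B}.
\]
Multiplying these and reinserting the factor $(2m)^{A+B}$ gives $\prod_{i=1}^{2m}(2m\,x_i)^{x_i}\ge 2^{A+B}A^{A}B^{B}$, and the right-hand side is $\ge (A+B)^{A+B}=S^{S}$ precisely by Lemma \ref{lemma:inequality} applied to the pair $(A,B)$. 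This settles every $n$ that is a power of two.

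For the backward step I would descend from $n$ to $n-1$ by the standard trick of padding with the mean: given $x_1,\dots,x_{n-1}$ with sum $T$, apply the $n$-variable statement to $x_1,\dots,x_{n-1}$ together with $x_n:=T/(n-1)$. Then $S=nT/(n-1)$ and $nx_n=S$, so the extra factor $(nx_n)^{x_n}=S^{T/(n-1)}$ divides cleanly into $S^{S}$, leaving $S^{T}$; collecting the powers of $n$ and of $n-1$ then collapses exactly to $T^{T}$, which is the $(n-1)$-variable statement. Since doubling reaches arbitrarily large powers of two and the descent fills in every intermediate value, the induction covers all $n$.

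The computations in each step are short and mechanical; the only real content is recognizing that Lemma \ref{lemma:inequality} is exactly midpoint convexity of $t\mapsto t\ln t$, which is what makes the doubling step close up. Indeed, the quickest alternative route bypasses the induction entirely: since $\phi(t)=t\ln t$ is convex on $[0,\infty)$, Jensen's inequality with uniform weights $1/n$ gives $\frac1n\sum_{i=1}^n \phi(x_i)\ge\phi\!\left(\frac1n\sum_{i=1}^n x_i\right)$, which is the logarithmic form above after multiplying by $n$. I would keep the inductive proof for self-containedness, as it invokes only the already-established Lemma, but note the convexity viewpoint as the conceptual explanation. The main thing to get right is the bookkeeping of the multiplicative constants ($m^{\pm A}$, $(2m)^{A+B}$, and the powers of $n$ versus $n-1$) in the two induction steps.
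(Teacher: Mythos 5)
Your proof is correct and follows essentially the same route as the paper: a forward--backward (Cauchy-style) induction with the doubling step reduced to Lemma \ref{lemma:inequality} applied to the two partial sums, and the descent step obtained by padding with the mean $\frac{1}{n-1}\sum_{i=1}^{n-1}x_i$. The Jensen/convexity remark is a nice conceptual aside but not needed.
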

\begin{proof}
We will prove the corollary by induction. Clearly \eqref{eq234kl09} holds for $n=1$. Lemma \ref{lemma:inequality} shows that \eqref{eq234kl09} holds for $n=2$. Suppose \eqref{eq234kl09} holds for $n=k$. It suffices to show that \eqref{eq234kl09} holds for $n=2k$ and $n=k-1$.

First, we will show that \eqref{eq234kl09} holds for $n=2k$. Applying the inductive hypothesis for the $n=k$ terms $x_1,\dots,x_k$ and the $n=k$ terms  $x_{k+1},\dots,x_{2k}$, and then applying Lemma \ref{lemma:inequality} yields
\begin{align*}
\prod_{i=1}^{2k}(2kx_i)^{2x_i} \geq \bigg(\sum_{i=1}^k 2x_i\bigg)^{\sum_{i=1}^k 2x_i}\bigg(\sum_{i=k+1}^{2k} 2x_i\bigg)^{\sum_{i=k+1}^{2k} 2x_i}\geq \bigg(\sum_{i=1}^{2k}x_i\bigg)^{2\sum_{i=1}^{2k}x_i}.
\end{align*}
Taking square root of the inequality above gives us the case $n=2k$.

Next, we will show that \eqref{eq234kl09} holds for $n=k-1$. Applying the induction hypothesis for the $n=k$ terms $x_1,\dots,x_{k-1},\frac{1}{k-1}\sum_{i=1}^{k-1}x_i$, we have
\begin{align*}
&\prod_{i=1}^{k-1}(kx_i)^{x_i}\bigg(\frac{k}{k-1}\sum_{i=1}^{k-1}x_i\bigg)^{\frac{1}{k-1}\sum_{i=1}^{k-1}x_i} \geq \bigg(\sum_{i=1}^{k-1} x_i+\frac{1}{k-1}\sum_{i=1}^{k-1}x_i\bigg)^{\sum_{i=1}^{k-1} x_i+\frac{1}{k-1}\sum_{i=1}^{k-1}x_i}\\
\Rightarrow &\prod_{i=1}^{k-1}(kx_i)^{x_i}\bigg(\frac{k}{k-1}\sum_{i=1}^{k-1}x_i\bigg)^{\frac{1}{k-1}\sum_{i=1}^{k-1}x_i}  \geq \bigg(\frac{k}{k-1}\sum_{i=1}^{k-1}x_i\bigg)^{\frac{k}{k-1}\sum_{i=1}^{k-1} x_i}\\
\Rightarrow &\prod_{i=1}^{k-1}(kx_i)^{x_i} \geq \bigg(\frac{k}{k-1}\sum_{i=1}^{k-1}x_i\bigg)^{\sum_{i=1}^{k-1} x_i} \Rightarrow \prod_{i=1}^{k-1}((k-1)x_i)^{x_i} \geq \bigg(\sum_{i=1}^{k-1}x_i\bigg)^{\sum_{i=1}^{k-1} x_i}.
\end{align*}
Thus we have show that \eqref{eq234kl09} holds for $n=k-1$, which concludes the proof of the Corollary.
\end{proof}

\bibliographystyle{plain}
	\bibliography{bib}
\end{document}